\documentclass{amsart}

\usepackage[colorlinks,citecolor=niceblue,linkcolor=niceblue]{hyperref}
\usepackage{comment}
\usepackage{esint}
\usepackage{amssymb, amsrefs}
\usepackage{tikz}
\usetikzlibrary{calc}
\usetikzlibrary{patterns}
\usepackage{graphicx}
\usepackage{enumitem}
\usepackage{array}
\usepackage[export]{adjustbox}


\renewcommand{\MultipleCiteKeyWarning}[2]{}

\newtheorem{theorem}{Theorem}
\newtheorem{lemma}[theorem]{Lemma}
\newtheorem{corollary}[theorem]{Corollary}
\newtheorem{proposition}[theorem]{Proposition}
\theoremstyle{definition}

\newtheorem{remark}[theorem]{Remark}

\newtheorem{definition}[theorem]{Definition}

\newcommand{\eref}[1]{(\ref{e.#1})}
\newcommand{\tref}[1]{Theorem \ref{t.#1}}
\newcommand{\lref}[1]{Lemma \ref{l.#1}}
\newcommand{\pref}[1]{Proposition \ref{p.#1}}
\newcommand{\cref}[1]{Corollary \ref{c.#1}}
\newcommand{\fref}[1]{Figure \ref{f.#1}}
\newcommand{\sref}[1]{Section \ref{s.#1}}
\newcommand{\partref}[1]{\ref{part.#1}}
\newcommand{\dref}[1]{Definition \ref{d.#1}}
\newcommand{\rref}[1]{Remark \ref{r.#1}}

\newcommand{\deflink}[2]{\hyperref[#1]{#2}}

\newcommand{\OQE}{\textup{(\deflink{d.viscosity_solution}{O})}}
\newcommand{\EE}{\textup{(\deflink{d.energy_solution}{E})}}

\numberwithin{theorem}{section}
\numberwithin{equation}{section}

\newcommand{\R}{\mathbb{R}}

\newcommand{\grad}{\nabla}

\def\XXint#1#2#3{{\setbox0=\hbox{$#1{#2#3}{\int}$ }
\vcenter{\hbox{$#2#3$ }}\kern-.6\wd0}}

\newcommand{\ep}{\varepsilon}
\newcommand{\e}{\varepsilon} 

\newcommand{\dist}{\operatorname{dist}}
\newcommand{\Diss}{\operatorname{Diss}}

\newcommand{\one}{\mathbf{1}}

\definecolor{niceblue}{rgb}{0,0,0.7}
\definecolor{darkgreen}{rgb}{0,0.4,0}

\def\strikethrough#1{\setbox0\hbox{#1}\rlap{#1}\hbox to \wd0{\hss\strikebox\hss}}
\def\strikebox{\vrule height 0.6\ht0 depth -0.4\ht0 width 1.1\wd0}

\begin{document}

\title{An obstacle approach to rate independent droplet evolution}
\author[W. M. Feldman]{William M Feldman}
\address{Department of Mathematics, University of Utah, Salt Lake City, Utah, 84112, USA}
\email{feldman@math.utah.edu}
\author[I. C. Kim]{Inwon C Kim} 
\address{Department of Mathematics, University of California, Los Angeles, California, 90095, USA}
\email{ikim@math.ucla.edu}
\author[N. Po\v{z}\'{a}r]{Norbert Po\v{z}\'{a}r}
\address{Faculty of Mathematics and Physics, Institute of Science and Engineering, Kanazawa University, Kakuma, Kanazawa 920-1192, Japan}
\email{npozar@se.kanazawa-u.ac.jp}
\keywords{Rate-independent evolution, free boundaries, contact angle hysteresis, free boundary regularity}
\begin{abstract} We consider a toy model of rate independent droplet motion on a surface with contact angle hysteresis based on the one-phase Bernoulli free boundary problem. We introduce a notion of solutions based on an obstacle problem. These solutions jump ``as late and as little as possible", a physically natural property that energy solutions do not satisfy. When the initial data is star-shaped, we show that obstacle solutions are uniquely characterized by satisfying the local stability and dynamic slope conditions. This is proved via a novel comparison principle, which is one of the main new technical results of the paper. In this setting we can also show the (almost) optimal $C^{1,1/2-}$-spatial regularity of the contact line.  This regularity result explains the asymptotic profile of the contact line as it de-pins via tangential motion similar to de-lamination. Finally we apply our comparison principle to show the convergence of minimizing movements schemes to the same obstacle solution, again in the star-shaped setting. 
\end{abstract}

\maketitle

\setcounter{tocdepth}{1}
\tableofcontents

\section{Introduction}

The purpose of this paper is the analysis of geometric properties of the solutions of a toy model of a quasi-static droplet motion with contact angle hysteresis effects. 

Capillary surfaces incident to a solid surface are often subject to a phenomenon known as contact angle hysteresis. Instead of a single stable contact angle determined by the material properties, as predicted by the classical Young's law, there is a pinning interval, or range of stable apparent contact angles.  Thus, under small forcings, the contact line can ``stick" in a similar way to classical static friction in mechanics. The origin of contact angle hysteresis and the appropriate modelling of its effects remain the subject of much interest in the physics, engineering, and mathematics literature \cite{KarimReview}.

In the present model, we consider a droplet on a flat $d$-dimensional plane whose free surface is given as a graph of a function $u = u(t, x)$, $u: [0, \infty) \times U \to [0, \infty)$, where $U \subset \R^d$ is a connected domain with compact complement; see \fref{one-phase}. The sets $\Omega(u(t)) := \{u(t) > 0\}$ and $\{u(t) = 0\}$ are respectively the wet and dry regions at time $t$. At the domain boundary $\partial U$, the height of the droplet surface is a given function of time $F = F(t)$. We assume that the time scale at which the droplet reaches equilibrium is much shorter than the time scale at which $F$ changes, that is, the evolution is \emph{quasi-static}. Therefore at each time $u(t)$ minimizes the linearized surface area of the graph $\{(x, u(t, x)): u(t, x) > 0\}$, and at the boundary of the wet region $\partial \{u(t) > 0\} \cap U$ the contact slope is within the allowed interval $|\nabla u(t)|^2 \in[1 - \mu_-, 1 + \mu_+]$ with some $\mu_- \in (0,1)$ and $\mu_+ > 0$. In summary, at each time $t \in [0, T]$, $u(t)$ is a solution of the \emph{local stability condition}
\begin{equation}\label{e.stability-condition-intro}
\begin{cases}
\Delta u(t)= 0 & \hbox{ in } \ \Omega(u(t))\cap U,\\
1-\mu_- \leq |\grad u(t)|^2 \leq 1+\mu_+ & \hbox{on } \partial \Omega(u(t)) \cap U,
\end{cases}
\end{equation}
while it satisfies the Dirichlet boundary condition
\begin{align*}
u(t) = F(t) \qquad \text{on } \partial U.
\end{align*}

\begin{figure}
\centering
\begin{minipage}{0.48\textwidth}
\begin{tikzpicture}
\draw[->,dotted] (-1,0) -- (5.5,0) node[right] {$x$};
\draw[->,dotted] (0,0) -- (0,3.5) node[above] {$u$};
\draw[thick] plot [smooth] coordinates {
(0.0,3.0)
(0.4,2.45)
(0.8,2.0)
(1.2,1.62)
(1.6,1.29)
(2.0,1.0)
(2.4,0.75)
(2.8,0.529)
(3.2,0.333)
(3.6,0.158)
(4.0,0.0)
};
\fill[black!5!white] plot [smooth] coordinates {
(0.0,3.0)
(0.4,2.45)
(0.8,2.0)
(1.2,1.62)
(1.6,1.29)
(2.0,1.0)
(2.4,0.75)
(2.8,0.529)
(3.2,0.333)
(3.6,0.158)
(4.0,0.0)
} -- (0,0);
\fill[black!20!white] (0,0) -- (5,0) -- (5,-1) -- (0,-1);
\draw (1,1) node {liquid};
\draw (3,1.5) node {gas};
\draw (2.5,-0.7) node {solid};
\draw (2,0) node[below] {$\{u > 0\}$};
\draw (4,0) node[below] {$\partial \{u > 0\} \cap U$};
\fill (4,0) circle[radius=2pt];
\draw[thick] (0,0)--(4,0);
\draw (0,3) node[left] {$F$};
\draw (0.8,2.0) node[above right] {$u = u(x)$};
\draw (-0.5,0) node[below] {$U^\complement$};
\end{tikzpicture}
\end{minipage}
\hfill
\begin{minipage}{0.45\textwidth}
\begin{tikzpicture}
\draw[thick,fill=black!5!white] plot [smooth cycle] coordinates {
(1.96,0.0)
(1.95,0.633)
(1.58,1.15)
(1.2,1.65)
(0.607,1.87)
(0.0,2.02)
(-0.616,1.9)
(-1.17,1.61)
(-1.64,1.19)
(-1.87,0.607)
(-2.04,0.0)
(-1.85,-0.603)
(-1.66,-1.2)
(-1.15,-1.58)
(-0.629,-1.94)
(-0.0,-1.98)
(0.62,-1.91)
(1.18,-1.63)
(1.6,-1.16)
(1.94,-0.629)
};
\draw[fill=white] plot [smooth cycle] coordinates {
(1.2,0.0)
(0.759,0.551)
(0.259,0.797)
(-0.359,1.1)
(-0.859,0.624)
(-0.8,0.0)
(-0.859,-0.624)
(-0.359,-1.1)
(0.259,-0.797)
(0.759,-0.551)
}
;
\draw (1.5,-1.5) node[right] {$\partial \{u > 0\} \cap U$};
\draw (0,0) node {$U^\complement$};
\draw (0,-0.5) node {$u = F$};
\draw (1,1) node {$\{u > 0\}$};
\end{tikzpicture}
\end{minipage}
\caption{Side view (left) and the top view (right) of the setup for the one-phase free boundary problem.}
\label{f.one-phase}
\end{figure}
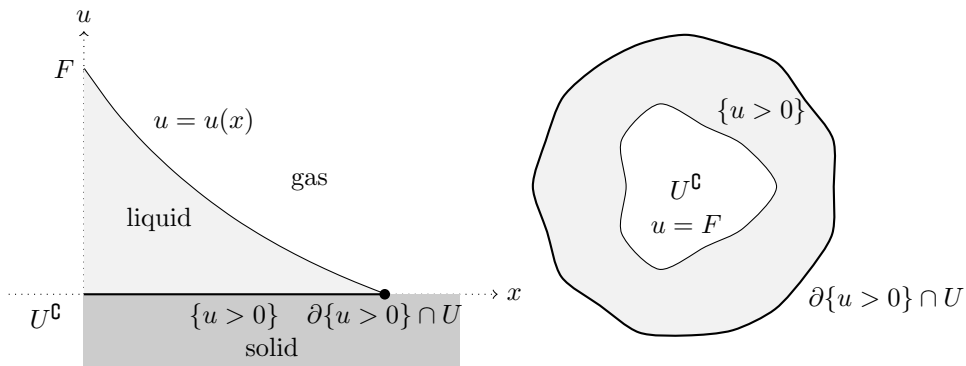

It remains to specify how the droplet reacts to the changes in the boundary condition $F(t)$. Here it seems reasonable to require that the support $\Omega(u(t)) = \{u(t) > 0\}$ adjusts as little as necessary so that the conditions in \eqref{e.stability-condition-intro} can be satisfied. 
This should mean that it expands (resp. shrinks) only at points where the contact angle condition $|\nabla u|^2 \leq 1 + \mu_+$ (resp. $|\nabla u|^2 \geq 1 - \mu_-$) saturates. This heuristic ``motion law'' suggests the \emph{dynamic slope condition}
\begin{equation}\label{e.dynamic-slope-condition-intro}
|\grad u(t)|^2(x) = 1\pm \mu_\pm \quad \hbox{ if } \quad \pm V_n(\Omega(u(t)),x) >0  \quad \hbox{on } \  \partial \Omega(u(t)) \cap U,
\end{equation}
where $V_n(\Omega(u(t)),x)$ is the outward normal velocity of $\Omega(u(t))$ at $x \in \partial \Omega(u(t))$.

It is natural to frame this as an obstacle problem for $u(t)$ as long as the forcing $F$ is piecewise monotone. More precisely, we assume that there is a finite set $Z$ such that \begin{equation}\label{e.forcing}
F: [0,T] \to (0,\infty) \hbox{ is Lipschitz and changes monotonicity only on } Z.
\end{equation}

The above motivates the following definition.

\begin{definition}\label{d.viscosity_solution}
We say that $u: [0, T] \times \overline U \to [0, \infty)$ is a \emph{obstacle solution} \OQE{} in $U$ driven by $F$ on $\partial U$ if
\begin{enumerate}
\item (\emph{Initial data}) $u(0)$ is a viscosity solution of the local stability conditions \eref{stability-condition-intro}.
\item (\emph{Dirichlet forcing})   For all $t \in [0,T]$
\begin{equation}\label{e.dirichletforcing-01}
u(t) = F(t) \ \hbox{ on } \ \partial U.
\end{equation}
\item (\emph{Obstacle condition}) For every $(s,t) \cap Z = \emptyset$, so that $F$ is monotone on $[s,t]$, $u(t)$ is the minimal supersolution of \eref{stability-condition-intro} and \eref{dirichletforcing-01} above $u(s)$ when $F$ is increasing on $[s,t]$ (resp. maximal subsolution below $u(s)$ when $F$ is decreasing).

\end{enumerate}
\end{definition}

The notions of minimal supersolution and maximal subsolution above are in the viscosity solutions / Perron's method sense; see \sref{comparison} for more details. The stability condition on the initial data is for convenience, otherwise the solution could jump at the initial time resulting in a ``replacement" initial data satisfying \eqref{e.stability-condition-intro}.

It is not hard to show that an obstacle solution satisfies the local stability and dynamic slope conditions. However, the question whether the conditions \eqref{e.stability-condition-intro} and \eqref{e.dynamic-slope-condition-intro} uniquely characterize the obstacle solution is less obvious and we give an affirmative answer in the strongly star-shaped setting with additional regularity assumptions. In fact, we do not expect this to be the case when there are jumps in the evolution, which cannot be easily ruled out in the non-star-shaped setting.

 \begin{theorem}[see \tref{equivalence}]\label{part.main1-p4}
 Let $F$ satisfy \eqref{e.forcing}, and let $u$ be an obstacle solution in the sense of \dref{viscosity_solution}:
 \begin{enumerate}[label = (\roman*)]
     \item If $u$ is uniformly non-degenerate at its free boundary (which holds if $d=2$ or if $\Omega_0$ is strongly star-shaped), then $u$ satisfies the local stability condition \eqref{e.stability-condition-intro} and the dynamic slope condition \eqref{e.dynamic-slope-condition-intro} in the viscosity solutions sense; \dref{local-stability-visc} and \dref{local-stability-visc}.
     \item  Furthermore, if $\Omega_0$ is a strongly star-shaped $C^{1,\alpha}$ domain, then any viscosity solution of \eqref{e.stability-condition-intro} and \eqref{e.dynamic-slope-condition-intro} is the unique obstacle solution.
 \end{enumerate}

 \end{theorem}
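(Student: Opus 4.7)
The plan is to handle the two parts in sequence. For part (i), the local stability condition should follow from the Perron-style construction in \dref{viscosity_solution}: on any maximal interval $(s,t)$ disjoint from $Z$ on which $F$ is monotone, $u(t)$ is (up to sign) an extremal supersolution or subsolution of the stationary Bernoulli problem, and such extremal elements are automatically viscosity solutions of \eref{stability-condition-intro}. Stability at the monotonicity-change times in $Z$ then follows by passing to limits and using that $F$ is Lipschitz. For the dynamic slope condition, I would argue by contradiction at a moving free boundary point $(t_0,x_0)$ with outward normal velocity $V_n>0$, assuming the viscosity upper slope of $u(t_0)$ at $x_0$ is strictly less than $1+\mu_+$. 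One then constructs a competitor $\tilde u \leq u(t_0)$ by pushing the free boundary inward slightly near $x_0$, still a supersolution of \eref{stability-condition-intro} and still dominating $u(s)$ for $s<t_0$ close to $t_0$, which contradicts the minimality of $u(t_0)$. The uniform non-degeneracy assumption is what lets one convert test-function slope information into quantitative control on the free-boundary position; the inward-motion case is symmetric via maximal subsolutions.

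For part (ii), let $v$ be any viscosity solution of \eref{stability-condition-intro}--\eref{dynamic-slope-condition-intro} with $v(0)=u_0$ and let $u$ be the obstacle solution with the same initial data. By part (i), $u$ is also a viscosity solution of the coupled system, so it suffices to prove $u=v$. I would decompose $[0,T]$ into monotone intervals of $F$ and argue inductively, reducing to a single monotone (say increasing) interval $[s,t]$. One direction is comparatively soft: since $F(t)\geq F(s)$, standard comparison for the stationary Bernoulli problem yields $v(t)\geq v(s)=u(s)$, so $v(t)$ is a supersolution of \eref{stability-condition-intro} above $u(s)$, and the minimality property in \dref{viscosity_solution} then forces $u(t)\leq v(t)$.

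The reverse inequality $v\leq u$ is the heart of the argument. The plan is to exploit strong star-shapedness through a one-parameter family of rescalings. Fix a star-center $x_*$ of $\Omega_0$ and consider a spatial dilation $u^\lambda$ obtained by expanding $u$ from $x_*$ by a factor controlled by $\lambda>0$, so that $|\grad u^\lambda|$ at a point equals $|\grad u|$ at the corresponding preimage (the slope condition is invariant under this dilation) while $\Omega(u_0) \Subset \Omega(u^\lambda(0))$ with strict, quantitative separation. Strong star-shapedness together with $C^{1,\alpha}$ regularity of $\partial \Omega_0$ is exactly what makes this initial gap uniform in the outward direction along the entire free boundary. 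Since $u^\lambda$ solves an appropriately rescaled version of the coupled system, applying the paper's novel comparison principle to $v$ and $u^\lambda$ propagates the initial strict ordering: $v(t)\leq u^\lambda(t)$ for all $t\in[0,T]$. Sending $\lambda\to 0^+$ then yields $v\leq u$, completing the equivalence.

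The main obstacle will be this propagation step, which is the nonstandard part of the argument. Contact-angle hysteresis destroys the classical parabolic monotonicity of the problem—the slope operator has a pinning interval $[1-\mu_-,1+\mu_+]$, and without the dynamic slope condition two solutions of \eref{stability-condition-intro} alone with the same initial data can genuinely differ. The dynamic slope condition, enforcing slope saturation whenever the free boundary moves, is exactly what is needed to close a first-contact/viscosity contradiction argument, since at a putative first crossing time the normal velocities of the two solutions are forced into incompatible regimes. The $C^{1,\alpha}$ hypothesis on $\Omega_0$ is likely what guarantees enough regularity of the crossing geometry throughout the evolution to insert viscosity test functions at the contact point and run the crossing argument rigorously.
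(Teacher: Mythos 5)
Your high-level structure is close to the paper's: for (i), use the extremal (minimal supersolution / maximal subsolution) character of the obstacle construction, and for (ii), reduce to a comparison principle exploited through star-shaped dilations. But there are two genuine gaps.

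The first is the ``soft'' direction of part (ii). You assert that on a monotone increasing interval $[s,t]$, ``standard comparison for the stationary Bernoulli problem yields $v(t)\geq v(s)$.'' This is a time-monotonicity claim about an arbitrary viscosity solution $v$ of \eqref{e.stability-condition-intro}--\eqref{e.dynamic-slope-condition-intro}, and it does not follow from any stationary comparison: the Bernoulli free boundary problem with pinning interval $[1-\mu_-,1+\mu_+]$ has a continuum of stationary solutions for the same boundary data, precisely because of hysteresis, and the dynamic slope condition alone does not forbid an a priori solution from oscillating in time (it only constrains the slope at points with nonzero cone velocity, and both advancing and receding motion are permitted with the respective saturated slopes). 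Indeed the paper's introduction explicitly flags time-monotonicity of solutions as one of the ``seemingly simple properties'' that is difficult to extract from the local laws alone. The paper sidesteps this entirely by applying a single two-sided comparison principle, \pref{MVS-OVS-comparison}, in both directions: the obstacle solution $u$ is simultaneously a sub- and supersolution with extra regularity, $v$ is both a sub- and supersolution in the weak sense, so $u\leq v$ and $v\leq u$ follow symmetrically from the same lemma without ever invoking monotonicity of $v$.

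The second gap is that you invoke ``the paper's novel comparison principle'' without identifying the mechanism that makes it work, and you skip the regularity theory that makes the obstacle solution an admissible comparison partner. The difficulty is that \dref{dynamic-slope-condition-visc} only gives slope information at points with strictly positive or negative cone velocity, so a first-touching argument where the two boundaries cross tangentially with zero velocity cannot be closed directly. The paper's fix is a sup-convolution with a decaying radius $\rho(t)=\rho_0 e^{-t}$ applied to the regular solution, which artificially accelerates its free boundary velocity by $-\rho'(t)>0$ at the contact point; this forces either $u$ to satisfy the negative cone condition or $w$ a nearby positive cone condition, at which point the dynamic slope inequalities become incompatible after a $\lambda$-dilation. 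For this to work, the paper needs the obstacle solution to be $C^{1,\beta}$ in space with continuously varying gradient and Lipschitz free boundary in time (assumptions (a) and (b) of \pref{MVS-OVS-comparison}); establishing this uses the full Bernoulli-obstacle regularity machinery of \sref{bernoulli-reg} (\tref{bernoulli-reg}, \lref{space-regularity}, \lref{time-regularity}, \cref{derivative-time-cont}), and it is exactly here that the $C^{1,\alpha}$ hypothesis on $\Omega_0$ and the star-shapedness (to get cone monotonicity and hence non-degeneracy via \lref{lip-bdry-nondegen}) are consumed. Your remark that ``$C^{1,\alpha}$ ... is likely what guarantees enough regularity of the crossing geometry'' correctly anticipates that this matters but does not supply the argument.

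On part (i), your competitor-construction strategy for the dynamic slope condition (pushing the free boundary inward near a moving point) is a genuinely different route from the paper's, which reads off \eqref{e.dynamic-slope-condition-intro} structurally from the obstacle Bernoulli problem \eqref{e.ovs}: if the positive cone condition holds at $(t_0,x_0)$, then a spatial neighborhood of $x_0$ lies outside $\overline{\Omega(u(s))}$, and \eqref{e.ovs-a} directly gives $|\nabla u|^2 = 1+\mu_+$ there. Your route could plausibly be made to work but is considerably more delicate (one must verify the perturbed function remains a supersolution dominating $u(s)$), and you would also still need to address the passage from $u$ to the semicontinuous envelopes $u^*$, $u_*$ in \dref{local-stability-visc} and \dref{dynamic-slope-condition-visc}, which is where the uniform non-degeneracy hypothesis actually enters (cf.\ \rref{envelopes-stability} and \rref{envelopes-stability-supersoln}).
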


In a general setting, the obstacle solution \OQE{} can jump in time due to topological changes of the wet region $\Omega(u(t))$; see \sref{examples} for examples. The handling of jumps seems physically reasonable as, in a certain sense, the solution jumps ``as late as possible''. We believe that this is an important feature of this notion. This is in a contrast to an alternative approach to modeling the quasi-static evolution: a rate-independent evolution of the Bernoulli functional
\begin{equation}\label{e.energy-intro}
\mathcal{J}(v) = \int_U |\nabla v|^2 + {\bf 1}_{\{v>0\}} \ dx, \qquad v \in H^1(U),
\end{equation}
where $\one_{\{v > 0\}}$ is the indicator function of the set $\Omega(v) := \{v>0\}$,
with a \emph{dissipation distance}
\[\Diss(v_1,v_2) := \mu_+|\Omega(v_1) \setminus \Omega(v_0)| + \mu_-|\Omega(v_0) \setminus \Omega(v_1)|,\]  
again driven by the Dirichlet forcing $F$.

The authors explored this approach in a companion paper \cite{FKPi} so we refer the reader to that paper for details. Here we briefly review the important definitions. The approach is inspired by the work of DeSimone, Grunewald, and Otto \cite{DeSimoneGrunewaldOtto}, with theory developed by Alberti and DeSimone \cite{alberti2011}. Those works considered the surface energy capillarity functional with volume forcing using the energetic framework of rate-independent systems \cite{mielke2015book}.

In \cite{FKPi} the following notion of a solution was introduced:

\begin{definition}\label{d.energy_solution}
A measurable $u : [0,T] \to H^1(U)$ is a \emph{energy solution} \EE{} of the quasi-static evolution problem driven by Dirichlet forcing $F$ if the following hold:
\begin{enumerate}
\item (\emph{Forcing})  For all $t \in [0,T]$
\[ u(t) = F(t) \ \hbox{ on } \partial U\]
\item (\emph{Global stability}) The solution $u(t)\in H^1(U)$ and satisfies  for all $t \in [0,T]$:
\begin{equation}
\label{e.stability}
\mathcal{J}(u(t)) \leq \mathcal{J}(u') + \Diss(u(t),u') \qquad \text{for all }u' \in u(t)+ H^1_0(U). 
\end{equation}
\item (\emph{Energy dissipation inequality}) For every $0 \leq t_0 \leq t_1 \leq T$ it holds
\begin{equation}\label{e.diss-inequality}
 \mathcal{J}(u({t_0}))-\mathcal{J}(u({t_1}))  + \int_{t_0}^{t_1}2 \dot{F}(t)P(t) \ dt \geq \Diss(u(t_0),u(t_1)).
\end{equation}
   Here $P(t) = P(u(t))= \int_{\partial U} \frac{\partial u(t)}{\partial n} \ dS$ is an associated pressure. 
  \end{enumerate}
  \end{definition}

A formal first variation of the global stability condition \eqref{e.stability} shows that $u(t)$ should indeed be a solution of \eqref{e.stability-condition-intro}. However, due to the global nature of the stability condition, the solution can in principle jump to a more favorable energy configuration earlier than would be physically reasonable; see \sref{example-jump-conditions} for a discussion of an example. Therefore we do not expect that the obstacle solutions \OQE{} and energy solutions \EE{} coincide when jumps occur.

The authors showed in \cite{FKPi} that the limit points as timestep $\delta \to 0$ of the following minimizing movements scheme are energy solutions.
Define a piecewise constant interpolation 
 \begin{equation}
 \label{e.minimizing-movement-scheme-solution}
 u_{\delta}(t):= u_{\delta}^k \ \hbox{ and } \ F_\delta(t) = F(k\delta) \ \hbox{ if } \ t\in [k\delta, (k+1)\delta).
 \end{equation}
of the time-discrete scheme
\begin{equation}\label{e.minimizing-movement-scheme}
u_{\delta}^k \in \mathop{\textup{argmin}} \left\{ \mathcal{J}(w) + \Diss(u^{k-1}_\delta, w): w\in F(k\delta)+  H^1_0(U)\right\}.
\end{equation}

As our second main result, we show that the obstacle solutions coincide with the notion of energy solutions that is generated from the minimizing scheme in the case of strongly star-shaped data. As we will see convexity is not preserved in the problem, \sref{ex-non-convexity}, and thus star-shapedness is the most natural context for this problem where no topology change or jump occurs in the problem, which allows pointwise regularity analysis.

\begin{theorem}[see \tref{equivalence} and \tref{oqe-MM-relation}]\label{t.main-1}  Let $F$ satisfy \eqref{e.forcing}, and let $u(0)$ be strongly star-shaped with bounded support $\R^d \setminus U \subset \Omega_0$. Further assume that $\partial\Omega_0$ is $C^{1,\alpha}$ and $u(0)$ satisfies \eqref{e.stability-condition-intro}. Then the following holds for $u$,  the unique obstacle solution \OQE{} on $[0,T]$ with initial data $u(0)$:
\begin{enumerate}[label = (\roman*)]
\item\label{part.main1-p3} The unique \OQE{} solution $u$ is also the unique energy solution that is the limit of minimizing movement scheme with initial data $u(0)$. Moreover the solutions \eqref{e.minimizing-movement-scheme-solution} of the discrete-time minimizing scheme converge uniformly to $u(t)$ with a uniform rate that only depends on $F$, $\mu_{\pm}$.

\item\label{part.main1-p1}  $u(t)$ is strongly star-shaped for each time.
\item\label{part.main1-p2} $u(t)$ and  $\Omega(u(t))= \{u(t)>0\}$ is $C^{0,1}_tC_x \cap L^\infty_tC^{1,\min\{\frac{1}{2},\alpha\}-}_x$. 
\end{enumerate}
\end{theorem}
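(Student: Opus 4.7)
The plan is to treat the three parts in the order \partref{main1-p1}, \partref{main1-p2}, \partref{main1-p3}, because star-shape preservation underlies both the regularity theory and the identification of minimizing-movement limits. For \partref{main1-p1} I would induct on the monotone pieces of $F$ cut out by $Z$ in \eref{forcing}. On an increasing interval $[s,t]$, \dref{viscosity_solution} characterizes $u(t)$ as the minimal viscosity supersolution of \eref{stability-condition-intro} lying above $u(s)$ with boundary data $F(t)$. Fixing a star-point $x_0 \in \R^d \setminus U$ for $u(s)$, I would test $u(t)$ against the dilations $v_\lambda(x) := u(t)(x_0 + \lambda^{-1}(x-x_0))$ for $\lambda > 1$: this transformation preserves harmonicity and the slope condition $|\grad v_\lambda|^2 \in [1-\mu_-, 1+\mu_+]$, while the strong star-shapedness of $u(s)$ and the hypothesis $\R^d \setminus U \subset \Omega_0$ give $v_\lambda \geq u(s)$ pointwise. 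After repairing the Dirichlet condition on $\partial U$ with a localized barrier, $v_\lambda$ becomes an admissible competitor, so minimality forces $u(t) \leq v_\lambda$, which is precisely star-shapedness of $u(t)$. The decreasing case is dual, using $\lambda < 1$ and maximal subsolutions.

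For \partref{main1-p2}, the temporal Lipschitz continuity $u \in C^{0,1}_t C_x$ comes from comparing $u(t')$ with translates of $u(t)$: since $F$ is Lipschitz, small time shifts correspond to uniform Dirichlet perturbations, which propagate uniformly in space via the comparison principle for \eref{stability-condition-intro}. For spatial regularity, I would use \partref{main1-p1} together with the two-sided slope bound in \eref{stability-condition-intro} to deduce that $\partial \Omega(u(t))$ is a Lipschitz radial graph at every time, and then invoke one-phase Bernoulli regularity theory to upgrade this to $C^{1,\beta}$. The exponent $\beta = \min\{1/2, \alpha\}-$ arises from two sources: $\alpha$ propagates the initial contact-line regularity, while the $1/2$ encodes the de-pinning asymptotic profile described in the abstract (a tangential, delamination-type motion at the contact line).

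For \partref{main1-p3} I would argue that any uniform limit $\tilde u$ of the interpolants $u_\delta$ from \eref{minimizing-movement-scheme-solution} satisfies \dref{viscosity_solution}. On an increasing interval, the dissipation penalty $\mu_+|\Omega(u_\delta^k) \setminus \Omega(u_\delta^{k-1})|$ in \eref{minimizing-movement-scheme} forces $u_\delta^k$ to be extremal in the same sense as the minimal supersolution of \eref{stability-condition-intro} above $u_\delta^{k-1}$ with boundary data $F(k\delta)$; passing $\delta \to 0$ and using stability of viscosity super/subsolutions under uniform limits shows $\tilde u$ is an obstacle solution, and \tref{equivalence} then identifies $\tilde u = u$. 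The quantitative rate follows from barriers built from $u$ and $\operatorname{Lip}(F)$ sandwiching $u_\delta$ in a window of size $O(\delta)$ in sup norm.

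The main obstacle, and the technical heart of the theorem, is verifying that the discrete global energy minimizers from \eref{minimizing-movement-scheme} actually coincide with the discrete minimal supersolution dynamics on each monotone piece of $F$. In general this equivalence fails: globally optimal energy configurations can trigger jumps earlier than the obstacle dynamics allows, which is precisely the discrepancy between \EE{} and \OQE{} discussed after \eref{energy-intro}. Star-shape preservation from \partref{main1-p1} is exactly what rules out the topological rearrangements enabling such early jumps, so the two notions of extremality agree; executing this rigorously is the content of \tref{oqe-MM-relation} and invokes the novel comparison principle flagged in the abstract as the paper's main new technical input.
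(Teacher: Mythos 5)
Your plan for \partref{main1-p1} is essentially the paper's: inductively on the monotone pieces of $F$, test the obstacle solution against its own dilation, and use minimality/maximality in \dref{viscosity_solution}. (The paper caps the dilated competitor at $F(0)$ rather than gluing a ``localized barrier,'' but this is a minor technical difference.)

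For \partref{main1-p2} there is a genuine gap in the spatial regularity step. You say ``invoke one-phase Bernoulli regularity theory to upgrade'' a Lipschitz boundary to $C^{1,\beta}$, but the free boundary condition in \eref{stability-condition-intro} is only the two-sided inequality $|\nabla u|^2 \in [1-\mu_-, 1+\mu_+]$; the unconstrained flat-implies-$C^{1,1-}$ theory of Caffarelli/De Silva requires $|\nabla u| = \text{const}$ and does not apply. The structure that saves the day---and is the reason \sref{bernoulli-reg} exists---is that on each monotone piece of $F$ the obstacle solution solves a one-sided Bernoulli \emph{obstacle} problem (\eref{ovs-a} or \eref{ovs-b}), with obstacle given by the state at the start of the interval. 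One then invokes the flat-implies-$C^{1,\frac{1}{2}-}$ theorems of Chang-Lara--Savin and Ferreri--Velichkov (\tref{flat-implies-regular-obstacle}, \tref{bernoulli-reg}), with cone monotonicity from star-shapedness supplying the initial flatness. Naming ``the de-pinning asymptotic profile'' gestures at this but does not identify the obstacle structure that makes a rigorous argument possible. Also, the time Lipschitz bound is obtained via dilations $\lambda^{-1}u(s,\lambda x)$ (exploiting the scaling invariance of the Bernoulli operator), not translates of $u(t)$, which do not interact well with the Dirichlet data on $\partial U$.

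For \partref{main1-p3} your route differs from the paper's, and I don't think the intermediate claim can be substantiated as stated. You propose that the dissipation penalty forces each $u_\delta^k$ to ``be extremal in the same sense as the minimal supersolution,'' and then pass $\delta\to 0$. But $u_\delta^k$ is a \emph{global} minimizer of energy plus dissipation, while the obstacle step is a Perron-type \emph{local} extremal; equating them would require, at minimum, showing that every $u_\delta^k$ is star-shaped and invoking a uniqueness theorem for the discrete obstacle Bernoulli problem, neither of which you address. The paper avoids this entirely: it uses the first variation of \eref{minimizing-movement-scheme} to show only that $u_\delta$ is a viscosity sub/supersolution of \eref{stability-condition-intro} and \eref{dynamic-slope-condition-intro} (\lref{minmov-MVS}), then applies the comparison principle \pref{MVS-OVS-comparison} \emph{directly} to $u_\delta$ against inner/outer dilations $M(\delta)^{\pm 1}v(t,M(\delta)^{\mp 1}x)$ of the (already regular, by \partref{main1-p2}) obstacle solution $v$. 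This yields the quantitative $O(\delta)$ bound immediately; your proposal defers the rate to unspecified barriers and would need a fresh argument.
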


Philosophically, (a) indicates that the local stability and dynamic slope conditions contain all the information in \EE{} or \OQE{} \emph{except for the jump law}. We expect that the spatial regularity $C^{1,\frac{1}{2}}_x$ described in \partref{main1-p2} is optimal.  Specifically, when the contact line ``peels" or ``de-laminates" from the initial data, as shown in \fref{convexity-loss}, free boundary $\partial\Omega(u(t))$ looks like a solution of a thin-obstacle / Signorini problem near its thin free boundary.  See \sref{bernoulli-reg}, specifically \tref{flat-implies-regular-obstacle} and \eref{signorini-expansion}, for a more precise description of this asymptotic expansion.  It is well known that Signorini solutions with smooth obstacle may peel away from their thin free boundary with as little as $C^{1,\frac{1}{2}}_x$ regularity due to the model solution $\textup{Re}((x+iy)^{3/2})$, see the survey \cite{FernandezRealSurvey} for details and references.  Although we do not fully explore the sharp asymptotic expansion of the free boundary near a de-lamination point $x_0 \in \partial \Omega(u(t))$, we do establish the (almost) optimal upper growth bound $|x-x_0|^{1+\min\{1/2, \alpha\}-}$.

Besides the regularity theory developed in \sref{bernoulli-reg}, the novel comparison principle for viscosity solutions of \eqref{e.stability-condition-intro} and \eqref{e.dynamic-slope-condition-intro}, \pref{MVS-OVS-comparison}, also plays a central role in this theorem. The convergence result for minimizing movements solutions part \partref{main1-p3} follows a similar idea to Chambolle's proof of convergence of the Almgren-Taylor-Wang / Luckhaus-Sturzenhecker schemes for mean curvature flow \cite{chambolle04,ATW,LS}.  

The star-shapedness is a key hypothesis in \tref{main-1}. Perhaps most importantly, the star-shaped geometry is used to ensure that there is no jump in the obstacle solution \OQE{}. This is crucial to obtain part \partref{main1-p3}, since the energy solutions and obstacle solutions have different jump laws, as we illustrate by an example in Section~\ref{s.examples}.

The local cone monotonicity implied by star-shapedness is also used to obtain the regularity result in \partref{main1-p2}. Cone monotonicity allows us to show, in \sref{bernoulli-reg}, that all blow-ups at the free boundary are half-plane solutions. Then we can invoke the ``flat means smooth" results of \cite{ChS,FerreriVelichkov} for Bernoulli obstacle problems.  

It is difficult to derive geometric properties of the evolution purely from the energetic structure. At the time of the original appearance of this manuscript we were not aware of any purely energetic method to obtain higher regularity of the free boundary.  Even seemingly simple properties seemed difficult to prove just from energetics, for example it was not clear whether arbitrary energy solutions \EE{} must respect time monotonicity of the Dirichlet forcing.  After the original appearance of this work, Collins and the first author \cite{CollinsFeldman} have partially resolved these questions in the context of minimizing movements solutions.

\subsection{Open questions}\label{s.discussion}

An important motivation for introducing the obstacle solutions, even outside of the star-shaped case, is that it handles time jump discontinuities well. This is in contrast to the global energetic solutions studied in \cite{alberti2011,FKPi} which jump as early as is energetically favorable. Instead, the obstacle solution jump ``as late and as little as possible". This is regarded as a more physically accurate jump condition, similar to the notion of balanced viscosity solutions: see for example \cite{MielkeRossiSavare1} or \cite[Chapter 3.8.2]{mielke2015book}.  The obstacle solution dissipates the ``right" amount of energy on its jumps, but does not obviously yield an energetic notion of jump dissipation. It would be interesting to study the possible connection of the balanced viscosity notion with our obstacle solution \OQE{}.

 It would be interesting to study the regularity of obstacle solutions outside of the star-shaped setting. Unlike the energy solutions considered in \cite{CollinsFeldman}, obstacle solutions certainly develop singularities even in low dimensions.  For example when two components touch and then merge after advancing one expects at least one singular point with a blow-up profile $(1+\mu_+)|x_d|_+$.  This does not occur for energy solutions since the jump discontinuity occurs before the components touch.  Unfortunately the main tool in the general regularity theory, the Weiss monotonicity formula \cite{Weiss}, applies to variational solutions. It is not clear if obstacle minimal supersolutions / maximal subsolutions satisfy the Weiss monotonicity formula.

\subsection{Notations and conventions} We list several notations and conventions which will be in force through the paper.

\begin{enumerate}[label = $\vartriangleright$]
\item We call a constant \emph{universal} if it only depends on $d$ and $\mu_+>0$, $\mu_- \in (0,1)$.
\item We will refer to universal constants by $C \geq1$ and $0<c \leq 1$ and allow such constants to change from line to line of the computation.  
\item We often abuse notation and write $\Omega(t)$ instead of $\Omega(u(t))$ etc.
\item $u^*$ and $u_*$ denote the upper-semicontinuous envelope and the lower-semicontinuous envelope of $u$, respectively, see \eqref{envelopes}. We write USC and LSC respectively as shorthand for upper semicontinuous and lower semicontinuous.
\item $F + H^1_0(U)$ refers to the space of functions in $H^1(U)$ with trace $F$ on $\partial U$.
\end{enumerate}

\subsection*{Acknowledgments} W. Feldman was partially supported by the NSF grants DMS-2009286 and DMS-2407235. I. Kim was partially supported by the NSF grant DMS-2153254. N. Pozar was partially supported by JSPS KAKENHI Kiban C Grant No. 23K03212. 

\section{Motivating examples} 
\label{s.examples}

In order to introduce the problem and motivate the phenomena we will study in the paper we present several examples with analytical computations and numerical simulations.  

\subsection{Numerical simulations}

The obstacle solutions \OQE\ can be relatively easily approximated by a large time limit ($\tau \to \infty$) of a dynamic contact angle problem: Suppose that $F$ is increasing on $[s, t]$. Given $u(s)$, $u(t)$ can be found as the limit $\tau \to \infty$ of the unique, monotone solution of the free boundary problem
\begin{align*}
\left\{\begin{aligned}
- \Delta w(\tau) &= 0, &&\text{in } \{w(\tau) > 0\} \cap U,\\
V_n &= \max(|\nabla w| - (1 + \mu_+)^{1/2}, 0) && \text{on } \partial\{w(\tau) > 0\} \cap U,\\
w(\tau) &= F(t), && \text{on } \partial U,\\
\{w(0) > 0\} &= \{u(s) > 0\}.
\end{aligned}\right.
\end{align*}
 Analogously, for $F$ decreasing on $[s, t]$ we replace the velocity law by $V_n = \min(|\nabla w| - (1 - \mu_-)^{1/2}, 0)$.

A numerical solution can be found by adapting the level set method introduced in \cite{Gibou02}, stopping at $\tau$ when $|V_n| < \e$ for some small parameter $\e > 0$. Plots in \fref{convexity-loss} were produced this way.

\subsection{Jump conditions for (global) energetic vs obstacle solutions}
\label{s.example-jump-conditions}
Next we consider an example where the jump time for the energy solutions is different from the jump time for the obstacle evolution.  Heuristically speaking the obstacle evolution solutions jump as late and as little as possible, while the (global) energy solutions jump whenever it becomes energetically favorable.

Consider a domain $U$ which is the complement of two disjoint closed disks and initial data given by two disjoint annuli.  Then under increasing $F(t)$ the solution will consist of two disjoint annuli until the value of the forcing when the boundaries of the two annuli meet at a single point.  As $F$ continues increasing past that critical value the solution will need to jump outwards to a new state.  This situation is depicted in the simulations in \fref{two-discs}.

The energy solution with the same data and forcing must jump before the value of the forcing when the two annuli touch.  One way to see this is that the blow-up at the touching point of the annuli is a two plane solution of the form
\[v(x) = (1+\mu_+)^{1/2} |x \cdot e|.\]
However this blow-up does not satisfy the global stability condition in \EE{}, since the harmonic replacement in any open region has the same positivity set but lower Dirichlet energy.  Since any blow-up of a globally stable state is also globally stable we conclude that no energy solution can coincide with the obstacle solution all the way to the jump time.

\subsection{Convexity is not preserved under the obstacle evolution}\label{s.ex-non-convexity} Consider the Bernoulli free boundary problem in the complement of a convex obstacle $K$
\[- \Delta u = 0 \ \hbox{ in } \ \{u>0\} \setminus K, \ \hbox{ with } \ |\grad u| =1 \ \hbox{ on } \ \partial \{u>0\} \setminus K.\]
It is known, see Henrot and Shahgholian \cite{HenrotShahgholian}, that the (unique) compactly supported solution of this problem has convex super-level sets.  In particular if one considers the obstacle solution \OQE{} without pinning $\mu_\pm = 0$, then the solution $u(t)$ of the obstacle evolution (which depends only on the current value $F(t)$ due to the lack of hysteresis) is convex at all times.

It is natural to ask whether convexity is still preserved under \OQE{} in the case of nontrivial pinning interval $\mu_\pm >0$.  In fact it is not. We give a simulation of a counterexample in \fref{convexity-loss} and present a sketched proof here.

Consider the case of $K = B_1$, $F(0) = 1$, and an initial region $\Omega_0$ which is a ``stadium" type initial data which is a large portion of the strip $-a < x_2 < a$ capped off by two circles of radius $a$ centered at $(\pm b,0)$.  Here $0 < a-1 \ll 1$ and $b \gg 1$.  The initial region $\Omega_0$ and $K$ uniquely determine the initial profile $u_0$, which does have convex super-level sets due to \cite{CaffarelliSpruck}.  Fix the pinning interval by the relations
\[\mu_+ := \max_{\partial \Omega_0} (|\grad u_0|^2-1) \ \hbox{ and } \ \mu_-:=\max_{\partial \Omega_0} (1 - |\grad u_0|^2). \]
By symmetry considerations the first maximum is achieved at $(0,\pm a)$ and the second at $(\pm (b+a),0)$.  We can guarantee that both maxima are positive by choosing the $a$ close to $1$ and $b$ large.  Now increasing $F(t)$ slightly above $1$ the free boundary needs to immediately move outwards near $(0,\pm a)$ since the slope there is already saturated at $|\grad u_0(0,a)|^2 = 1+\mu_+$, but since this is exactly the maximum value of the slope the outwards movement will only be in a small neighborhood of those two points. This motion must produce nonconvexity because the domain $\Omega(t)$ will have outward normal $e_2$ both at some point $(0,a+\delta(t))$ and at $(b,a)$.

This argument could be justified rigorously using \tref{main-1}.

\begin{figure}
\begin{tabular}{ll}
 \includegraphics[width=0.5\textwidth]{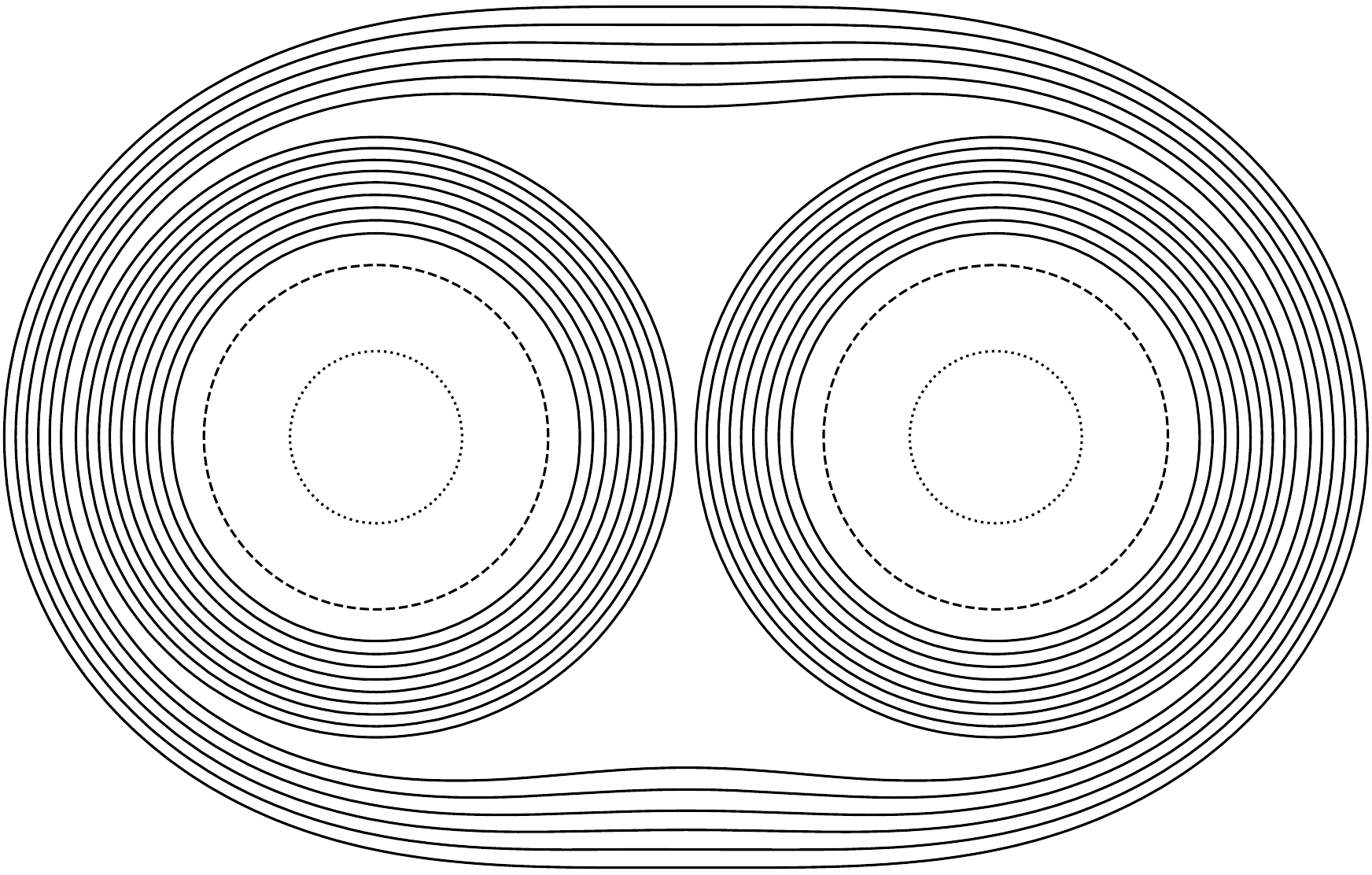}
 &
 \includegraphics[width=0.5\textwidth]{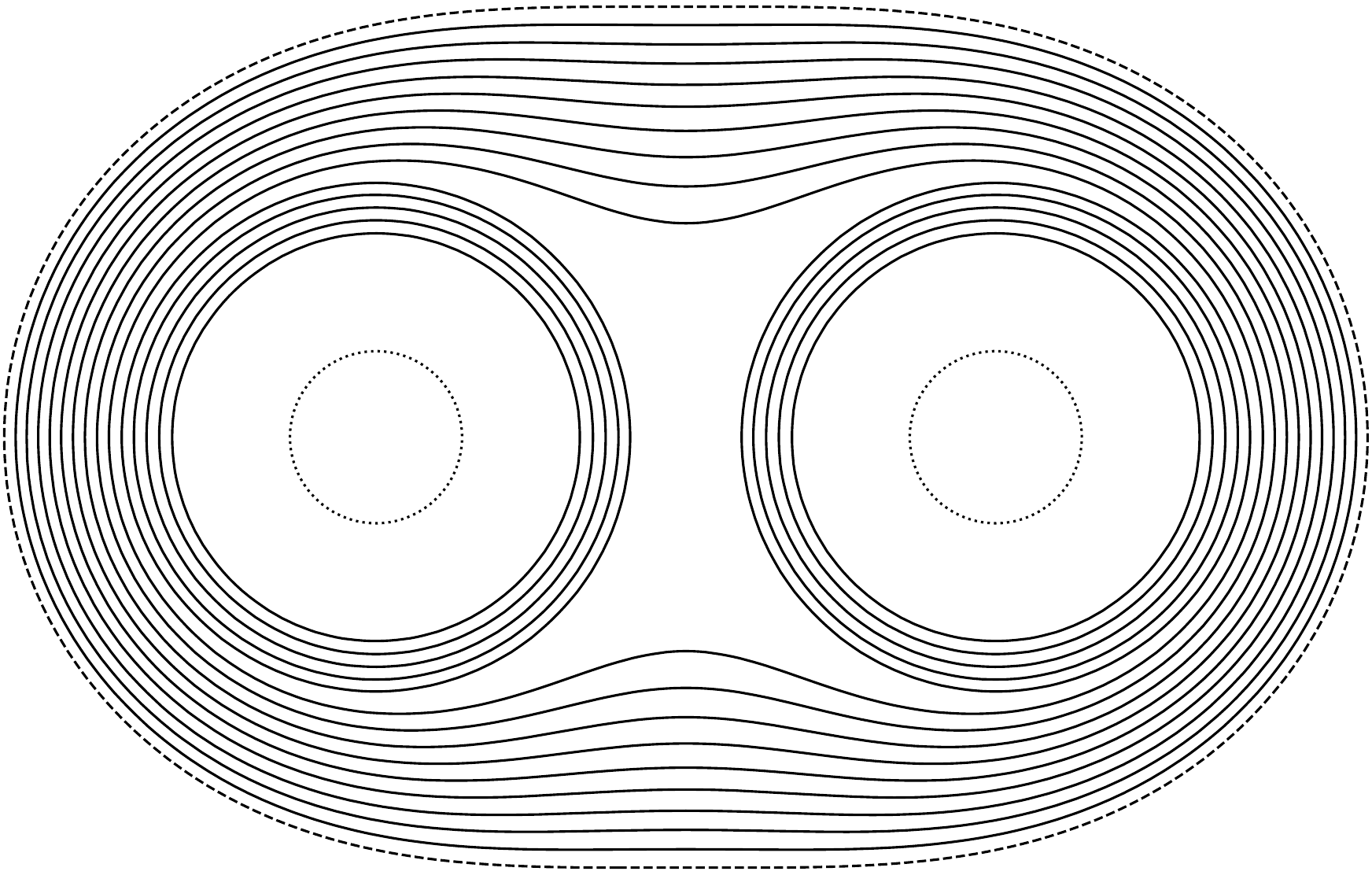}
 \\
 \includegraphics[width=.5\textwidth]{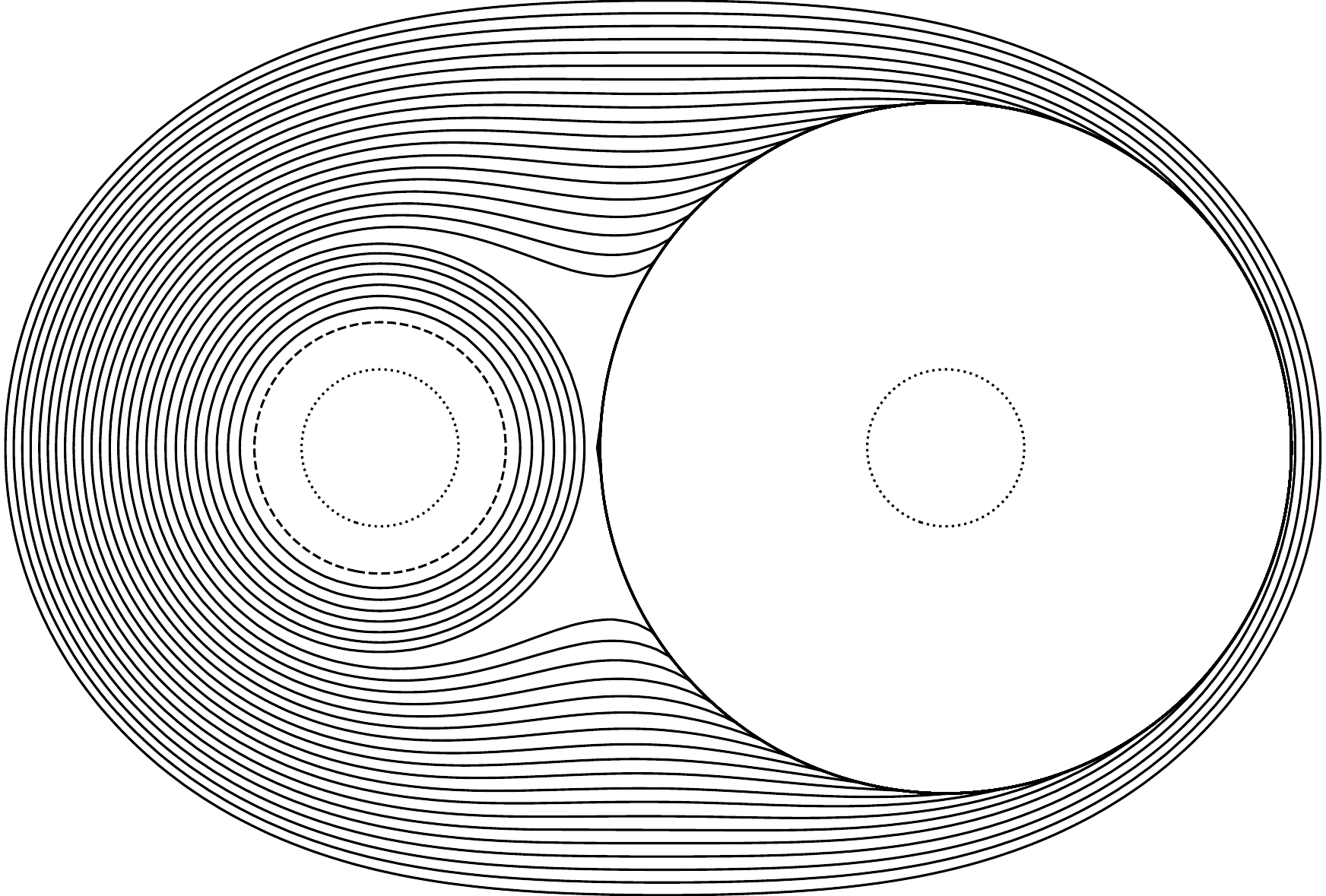}
 &
 \includegraphics[width=.5\textwidth]{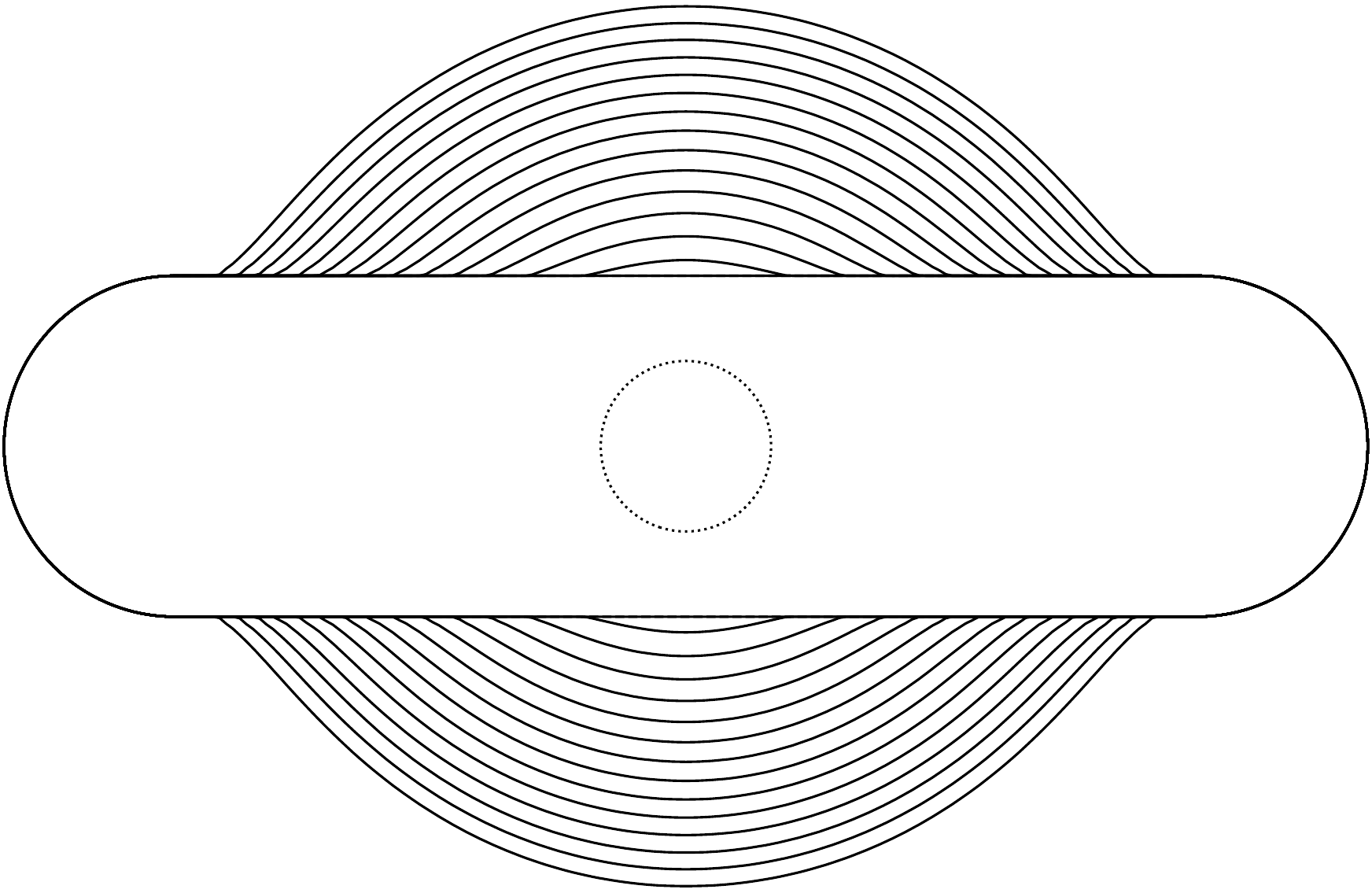}
\end{tabular}
\caption{Plots of boundaries of obstacle solution \OQE{} simulations. Solid curves represent $\partial \Omega(t) \cap U$ plotted for evenly spaced values of $F(t)$, with the initial shape dashed. $\partial U$ is given by a dotted curve. Top left: Disconnected annuli initial data, jump discontinuity on touching. 
Top right: Receding situation (decreasing $F(t)$) with initial data given by the last step of the top left image. Note that the jump occurs at a different configuration, as late as possible. 
Bottom left: Different radius annuli, free boundary peels from the larger annulus after the jump.  Bottom right: Stadium type initial data, convexity is not preserved.}
\label{f.small-large-merge}
\label{f.two-discs}
\label{f.convexity-loss}
\end{figure}

\section{Regularity theory of Bernoulli obstacle problems} 
\label{s.bernoulli-reg}
In this section we consider a pair of obstacle problems for the Bernoulli free boundary problem, one with an obstacle from above and the other with an obstacle from below.  The two problems are similar but not exactly symmetric, as we will see below in the analysis.  The regularity theory of the problem with obstacle from above has been developed by Chang-Lara and Savin \cite{ChS}, and as we were finishing preparing this paper their theory has been extended to the obstacle from below case by Ferreri and Velichkov \cite{FerreriVelichkov}. We present the problems, recall the flat implies smooth regularity results from the above-mentioned works, and then show how to achieve the initial flatness and full regularity under a cone monotonicity hypothesis which is appropriate for our work.

\subsection{Bernoulli obstacle problems} Let $U$ be an open region, the domain.  We say that $u$ is a solution / supersolution / subsolution of the (unconstrained) Bernoulli free boundary problem in $U$ if
\begin{equation}\label{e.bernoulli-unconstrained}
\begin{cases}
\Delta u = 0 &\hbox{in } \{u>0\} \cap U,\\
|\grad u| = 1 &\hbox{on } \ \partial \{u>0\}  \cap U.
\end{cases}
\end{equation}
Let the \emph{obstacle} $O$ be another open region with $C^{1,\alpha}$ boundary.

\begin{definition}
A function  $u \in C(\overline{U})$ is a solution of the Bernoulli problem in $U$ with obstacle $O$ from below if
\begin{equation}\label{e.obstacle-below}
\begin{cases}
\Delta u = 0 &\hbox{in } \{u>0\} \cap U\\
u>0 &\hbox{in }  O \cap U \\
|\grad u| = 1 &\hbox{on } \ (\partial \{u>0\}  \setminus \overline{O}) \\
|\grad u| \leq 1 &\hbox{on } \Lambda:=\ (\partial\{u>0\} \cap \partial O).
\end{cases}
\end{equation}
\end{definition}
\begin{definition}
A function $u \in C(\overline{U})$ is a solution of the Bernoulli problem in $U$ with obstacle $O$ from above if
\begin{equation}\label{e.obstacle-above}
\begin{cases}
\Delta u = 0 &\hbox{in } \{u>0\} \cap U\\
u=0 &\hbox{in } \ \overline{U} \setminus O\\
|\grad u| = 1&\hbox{on } \ (\partial \{u>0\}  \cap O) \\
|\grad u| \geq 1 &\hbox{on } \Lambda:= \ (\partial\{u>0\} \cap \partial O).
\end{cases}
\end{equation}
\end{definition}
See \fref{obstacle-pics} for depictions of obstacle solutions. The PDEs are solved in the standard viscosity sense, see for example \cite{ChS} or \dref{test-fcns-vs-def} below.

\begin{figure}
\begin{minipage}{.48\textwidth}
\begin{tikzpicture}[scale = 1.1]

\def\opa{.6}
\begin{scope}[xscale = 2, rotate=60]


\filldraw[gray!10] (0,0) -- (2,0) -- (2,1) -- (0,1);

\filldraw[gray] (2,1) -- (0,1) -- (0,1.7) -- (2,1.7) -- cycle;
\draw[black,dashed, very thick] (2,1) -- (0,1);

\begin{scope}[opacity = \opa]
\filldraw[gray!40] (0,1) -- (.75,1) .. controls ($(.75,1)+(.5,0)$) .. (2,1.5) -- (3,2.5) .. controls ($(3,2.5)$) and ($(1.75,2)+(.5,0)$) .. (1.75,2) -- (1,2) -- cycle;
\end{scope}
\foreach \y in {1, 1.2, ..., 1.8}
\draw[black,thin] (\y-1-.1,\y) -- (\y-1+.75,\y) .. controls ($(\y-1+.75,\y)+(.5,0)$) .. (2+\y-1,\y+.5);

\node at (1,.5) {$\{u=0\}$};
\node[right] at (2.4,1.9) {$|\grad u| = 1$};
\node[below] at (0,1) {$|\grad u| \geq 1$};
\node at (.25,1.5) {$O$};

\end{scope}

\end{tikzpicture}
\end{minipage}
\begin{minipage}{.48\textwidth}
\begin{tikzpicture}[scale = 1.1]
\def\opa{.6}

\begin{scope}[xscale = 2, rotate=60]
\def\yA{1}
\filldraw[gray!10] (0,0) -- (2,0) -- (2+\yA-1,\yA) -- (2+\yA-1-.75,\yA) .. controls ($(2+\yA-1-.75,\yA)+(-.5,0)$) .. (\yA-1,\yA-.5);

\filldraw[gray] (2,1) -- (0,1) -- (0,1.7) -- (2,1.7) -- cycle;
\draw[black,dashed, thick] (2,1) -- (0,1);

\begin{scope}[opacity=\opa]
\def\yB{2}
\filldraw[gray!40] (2+\yA-1,\yA) -- (2+\yA-1-.75,\yA) .. controls ($(2+\yA-1-.75,\yA)+(-.5,0)$) .. (\yA-1,\yA-.5) -- (\yB-1,\yB-.5) .. controls ($(\yB-1,\yB-.5)+(.1,.2)$) and ($(2+\yB-1-.75,\yB)+(-.5,0)$) .. (2+\yB-1-.75,\yB) -- (2+\yB-1,\yB) -- cycle;
\end{scope}
\foreach \y in {1, 1.2, ..., 1.8}
\draw[black,thin]  (2+\y-1+.1,\y) -- (2+\y-1-.75,\y) .. controls ($(2+\y-1-.75,\y)+(-.5,0)$) .. (\y-1,\y-.5);

\node at (1,.5) {$\{u=0\}$};
\node[right] at (2.4,1.4) {$|\grad u| \leq 1$};
\node[below] at (0,.5) {$|\grad u| = 1$};
\node at (.25,1.5) {$O$};

\end{scope}

\end{tikzpicture}
\end{minipage}
\caption{Left: Obstacle from above, slope is larger than $1$ everywhere and saturates where free boundary bends into $O$. Right: Obstacle from below, slope is smaller than $1$ everywhere and saturates where free boundary bends away from $\overline{O}$.}
\label{f.obstacle-pics}
\end{figure}
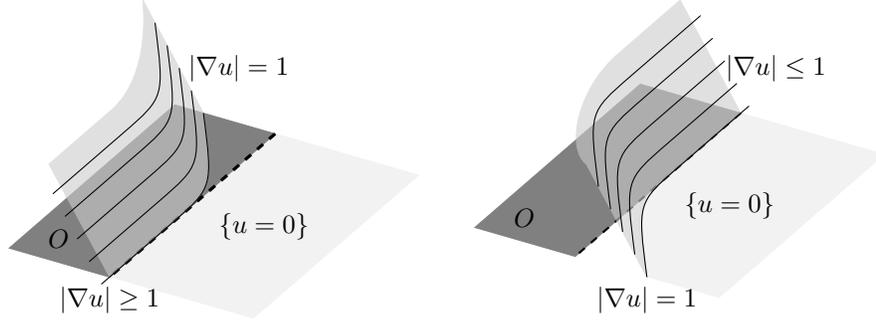

\subsection{Additional hypotheses} The obstacle below (resp. above) problems do not provide any a-priori bound on the slope at the free boundary from below (resp. above). Given a regular domain $O$ and a specific boundary data on $\partial U$ one could establish such bounds on the interior.  However it is more convenient for us to just list these additional bounds as hypotheses which will be in force for some (but not all) of the statements below.  Let $0 < \kappa < 1$ and consider the hypothesis: in the obstacle from below case
\begin{equation}\label{e.obstacle-below-hyp}
|\grad u| \geq \kappa \ \hbox{ in the viscosity sense on } \ \partial \{u>0\} \cap U,
\end{equation}
and in the obstacle from above case
\begin{equation}\label{e.obstacle-above-hyp}
|\grad u| \leq \kappa^{-1} \ \hbox{ in the viscosity sense  on } \ \partial \{u>0\} \cap U.
\end{equation}

\subsection{Existence and typical examples} An indicative example of $u$ solving a Bernoulli problem with obstacle from below  \eqref{e.obstacle-below} is the Perron's method obstacle minimal supersolution 
\begin{equation}\label{e.minimal-example}
u(x) := \inf\{v(x): \hbox{$v$ is a supersolution of \eref{bernoulli-unconstrained}, $v = g$ on $\partial U$, and } v>0 \hbox{ in } O \cap \overline{U}\}.
\end{equation}
Here $g \in C(\partial U)$ is some boundary condition.  In order for the minimal supersolution to also be positive on $O$ we need to put some additional condition on $g$. Notice that every supersolution in \eref{minimal-example} is above $w$, the solution of the Dirichlet problem
\[
\Delta w = 0 \ \hbox{ in } \ O \cap U, \
w = 0 \ \hbox{ on } \ \partial O \cap U, \ \hbox{ and } \ 
w = g  \ \hbox{ on }\  \partial U \cap \overline{O}.
\]
So if $w >0$ in $O$ then $u$ will be positive in $O$ as well.  In particular it would suffice to assume that
\[g > 0 \ \hbox{ on } \ O \cap \partial U.\]
The obstacle solution property follows from the typical Perron's method arguments. Local upward perturbations are possible everywhere in $U$, so $u$ is a supersolution everywhere, while local downward perturbations are possible only away from $\partial \{u>0\} \cap \overline{O}$ limiting the subsolution property.

An indicative example of $u$ solving a Bernoulli problem with obstacle from above \eref{obstacle-above} is the obstacle maximal subsolution 
\begin{equation}\label{e.maximal-example}
u(x) := \sup\{v(x): \hbox{$v$ a subsolution of \eref{bernoulli-unconstrained} in $U$, $v = g$ on $\partial U$, and } \{v>0\} \subset O\}.
\end{equation}
For consistency the positivity set of $g$ on $\partial U$ should be contained in $O \cap \partial U$.  As before Perron's method applies, arbitrary downward perturbations are possible so $u$ is a subsolution everywhere, but local upward perturbations are only allowed away from $\partial \{u>0\} \cap \overline{O}$.   

\begin{remark}
As in \cite{ChS} it is also possible to construct solutions to \eref{obstacle-above} by minimal supersolution Perron's method, or by energy minimization.  However, it is the maximal subsolutions that we will encounter in this work.  This is something we need to be careful with, since non-degeneracy at the free boundary is a more delicate issue for maximal subsolutions.
\end{remark}

\subsection{Flat implies $C^{1,\frac{1}{2}-}$ and regularity of cone monotone solutions} 

First let us recall the flat implies $C^{1,1-}$ regularity away from the obstacle.  This result is originally due to Caffarelli \cite{Caffarelli1,Caffarelli2}. A more recent alternative proof by De Silva \cite{DeSilva} has motivated the techniques used to study the obstacle problems we are considering.

\begin{theorem}\label{t.flat-implies-regular-unconstrained}(Flat implies $C^{1,1-}$ for unconstrained Bernoulli \cite{Caffarelli1,Caffarelli2})
Let $u$ solve \eref{bernoulli-unconstrained} in $B_1$ and $0 \in \partial \{u>0\}$.  For any $\beta \in (0,1)$ there is $\ep_0>0$ and $C \geq 1$ universal so that if
\[(x \cdot e - \ep)_+ \leq u(x) \leq (x\cdot e + \ep)_+ \ \hbox{ in } \ B_1\]
for some $\ep \leq \ep_0$, then for all $0 < r < 1$ and some $|\grad u(0)| = 1$
\[(x \cdot \grad u(0) - C\ep r^{1+\beta})_+ \leq u(x) \leq (x\cdot \grad u(0) + C\ep r^{1+\beta})_+ \ \hbox{ in } \ B_r.\]
\end{theorem}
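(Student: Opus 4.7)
The plan is to follow De Silva's version of the proof via an iterated improvement-of-flatness lemma. The single step to prove is: there exist universal $\rho \in (0,1)$, $\varepsilon_0 > 0$, and $C \geq 1$ such that if $u$ is $\varepsilon$-flat in the direction $e$ in $B_1$ with $\varepsilon \leq \varepsilon_0$, then $u$ is $\rho^{1+\beta} \varepsilon$-flat in some new direction $e'$ in $B_\rho$, with $|e - e'| \leq C\varepsilon$. Iterating this at dyadic scales $\rho^k$ produces a Cauchy sequence of unit directions $e_k$ whose limit must be $\grad u(0)$ with $|\grad u(0)| = 1$; summing the geometric tail then yields the quantitative flatness with exponent $1+\beta$ at every radius $0 < r < 1$.

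The single step is proved by contradiction and compactness. Suppose the step fails along a sequence $u_k$ that are $\varepsilon_k$-flat with $\varepsilon_k \to 0$. Consider the normalized deviations
\[
\tilde u_k(x) := \frac{u_k(x) - x \cdot e}{\varepsilon_k}
\]
in a neighborhood of the free boundary (together with the rescaled free boundary itself, viewed as a graph over $\{x \cdot e = 0\}$). The key auxiliary tool is a partial Harnack inequality for \eref{bernoulli-unconstrained} proved by explicit barriers adapted to the one-phase free boundary condition; it shows that $\tilde u_k$ has a uniform modulus of continuity up to the flat side of the free boundary. Along a subsequence, $\tilde u_k$ then converges locally uniformly to some H\"older function $\tilde u$ on the half-ball $B_{1/2}^+ := \{x \cdot e > 0\} \cap B_{1/2}$.

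The next step is to identify the limiting equation: $\tilde u$ is harmonic in $B_{1/2}^+$ (standard passage to the limit of harmonic functions), and it satisfies the Neumann-type condition $\partial_e \tilde u = 0$ on $\{x \cdot e = 0\} \cap B_{1/2}$ in the viscosity sense. This boundary condition is the formal linearization of $|\grad u| = 1$ around the model half-plane solution $(x \cdot e)_+$; it is justified by testing $u_k$ against comparison functions of the form $\bigl(x \cdot e + \varepsilon_k \varphi(x)\bigr)_+$ for smooth $\varphi$ and passing to the limit. By standard boundary regularity for the Neumann problem (or equivalently, by even reflection across $\{x \cdot e = 0\}$ to get a harmonic extension), $\tilde u$ is affine to any polynomial order at the origin:
\[
|\tilde u(x) - \tilde u(0) - \nu \cdot x| \leq C r^{1+\beta} \quad \text{in } B_r,
\]
for some $\nu \perp e$. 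Translating this affine correction back to $u_k$ in the tilted direction $(e + \varepsilon_k \nu)/|e + \varepsilon_k \nu|$ contradicts the hypothesized failure of the improvement lemma once $\rho$ is chosen small enough (depending on $\beta$ through the Neumann estimate).

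The main obstacle will be the rigorous proof of the linearized Neumann condition for the limit $\tilde u$ and the accompanying partial Harnack inequality; both require constructing Hopf-type barriers compatible with the free boundary slope $|\grad u| = 1$, and careful control of the positivity set of $u_k$ along the convergence. A minor bookkeeping point is that the cumulative rotations $|e_k - e_{k-1}| \leq C \rho^{k\beta} \varepsilon$ are summable, so the limiting direction $\grad u(0)$ is well-defined and the constants remain universal across all scales.
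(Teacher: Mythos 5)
The paper does not prove this theorem; it is cited verbatim from Caffarelli and De Silva as classical background, with the text explicitly pointing to De Silva's compactness argument as the template for the obstacle extensions that follow. Your sketch is a faithful, structurally correct outline of De Silva's proof (improvement-of-flatness via partial Harnack compactness, passage to the linearized Neumann problem, even reflection, iteration with summable rotations), and you correctly flag the two genuinely technical points — the barrier construction behind the partial Harnack inequality, and the rigorous derivation of the viscosity Neumann condition for the limit — as the nontrivial content. Since the paper itself gives no proof, there is nothing to compare beyond noting that your route coincides with the reference the authors had in mind.
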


Next we recall a similar flat implies smooth result on the contact set between the solution and the obstacle. Consider the contact set of the free boundary with the obstacle $\Lambda := \partial \{u>0\} \cap \partial O$.  Let us denote $\partial'\Lambda$ to be the boundary of $\Lambda$ relative to $\partial O$. 

\begin{theorem}\label{t.flat-implies-regular-obstacle}(Flat implies $C^{1,\frac{1}{2}-}$ at the contact set \cite{ChS,FerreriVelichkov})
Let $u$ solve either \eref{obstacle-above} or \eref{obstacle-below} in $B_1$ and $0 \in \partial' \Lambda$, $O$ be a $C^{1,\alpha}$ obstacle, and $e$ be the inward normal to $O$ at $0$.  For any $0 < \beta < \min\{\alpha,\frac{1}{2}\}$ there is $\ep_0>0$ and $C \geq 1$ universal so that if
\[(x \cdot e - \ep)_+ \leq u(x) \leq (x\cdot e + \ep)_+ \ \hbox{ in } \ B_1\]
for some $\ep \leq \ep_0$, then for all $0 < r < 1$ 
\[(x \cdot e - C\ep r^{1+\beta})_+ \leq u(x) \leq (x\cdot e + C\ep r^{1+\beta})_+ \ \hbox{ in } \ B_r.\]
\end{theorem}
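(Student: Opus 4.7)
The plan is straightforward given the state of the literature: this statement is essentially the main regularity theorem of \cite{ChS} for the obstacle-from-above problem \eqref{e.obstacle-above} and of \cite{FerreriVelichkov} for the obstacle-from-below problem \eqref{e.obstacle-below}. My first step is simply to check that the hypotheses line up, namely that my notion of viscosity solution matches the one in those works, that the $C^{1,\alpha}$ regularity assumption on $O$ and the flatness scale agree, and that the point $0 \in \partial' \Lambda$ (a point on the relative boundary of the contact set inside $\partial O$) falls into the framework where their theorems apply. If so, the conclusion is immediate.

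If instead one wants a self-contained proof, the plan is a De Silva-style iteration of flatness-improvements centered at the contact point. The key technical step is the \emph{improvement-of-flatness lemma}: there exist universal $\rho \in (0,1/2)$ and $\ep_0 \in (0,1)$ such that if $u$ is $\ep$-flat in $B_1$ with $0 \in \partial'\Lambda$ and $\partial O$ is $C^{1,\alpha}$-close to $\{x\cdot e = 0\}$ at scale $\ep$, then there is a unit vector $e'$ with $|e' - e| \leq C\ep$ such that
\[
(x \cdot e' - \tfrac{\ep}{2}\rho^{1+\beta})_+ \leq u(x) \leq (x\cdot e' + \tfrac{\ep}{2}\rho^{1+\beta})_+ \quad\text{in } B_\rho.
\]
I would prove this by compactness / contradiction. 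Suppose failure along a sequence $\ep_n \to 0$, and consider the normalized tilts $\tilde u_n(x) := (u_n(x) - (x\cdot e)_+)/\ep_n$. After passing to a subsequence, $\tilde u_n$ converges locally uniformly (in appropriate half-relaxed sense) to a limit $w$ on $B_1 \cap \{x \cdot e > 0\}$. In the limit, $w$ is a harmonic function in $B_1 \cap \{x\cdot e > 0\}$ with the linearized Bernoulli free-boundary condition $\partial_e w = 0$ on the portion of $\{x \cdot e = 0\}$ corresponding to $\partial\{u>0\}\setminus\overline{O}$, while on the thin set corresponding to $\Lambda$ the linearized obstacle condition becomes a \emph{Signorini-type complementarity}: $w \geq 0$, $\partial_e w \leq 0$, $w \cdot \partial_e w = 0$ (with inequalities reversed for the obstacle-from-above case).

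The main obstacle is precisely this linearized limit problem: it is the Signorini / thin obstacle problem, whose optimal regularity at the thin free boundary is $C^{1,1/2}$ by Athanasopoulos--Caffarelli, achieved by the model solution $\Re((x_1 + i x_d)^{3/2})$. From this optimal regularity I would conclude $|w(x) - L(x)| \leq C|x|^{3/2-}$ near the origin for some linear $L$ tangent to $\{x \cdot e = 0\}$. This produces the improved tilt $e'$ and the exponent $\beta < 1/2$; the additional constraint $\beta < \alpha$ arises from the $C^{1,\alpha}$ error in approximating $\partial O$ by its tangent hyperplane at scale $\rho$. A partial Harnack inequality, à la De Silva \cite{DeSilva}, is needed to keep the flatness from deteriorating during the compactness argument and to justify the uniform convergence of $\tilde u_n$ up to the flat boundary.

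Once the flatness improvement is established, the standard geometric iteration yields the conclusion: at dyadic scale $\rho^k$ the solution is $\ep_k$-flat with $\ep_k = \ep\, \rho^{k\beta}$ in a direction $e_k$ with $\sum_k |e_{k+1} - e_k| \leq C\ep$, so the normals converge and the pointwise expansion around the origin follows by interpolating between scales. The only subtle point in the iteration is preserving the hypothesis that $0 \in \partial'\Lambda$ and that $\partial O$ remains flat at the improved scale, both of which are built into the iteration because $0$ is a fixed contact point and $\partial O$ is $C^{1,\alpha}$ with $\beta < \alpha$.
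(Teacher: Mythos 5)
The paper gives no proof of this statement at all — it is explicitly recalled from \cite{ChS} (obstacle from above) and \cite{FerreriVelichkov} (obstacle from below), which is exactly what your first paragraph does. Your sketch of the underlying De Silva-style improvement-of-flatness argument, with the Signorini problem appearing as the linearized limit and the $C^{1,1/2}$ optimal regularity of the thin obstacle problem driving the exponent, also matches the paper's own one-paragraph summary of the method (the asymptotic expansion \eqref{e.signorini-expansion}), so the proposal is correct and aligned with the paper.
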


In both the obstacle above and obstacle below cases the proof is based on establishing an asymptotic expansion for flat solutions of the form
\begin{equation}\label{e.signorini-expansion}
    u(x) = (x \cdot  e + \ep w(x) + o(\ep))_+
\end{equation}
where $w$ is a solution of a certain Signorini or thin obstacle problem and $\ep$ is the flatness in $B_1$ as in the hypothesis of \tref{flat-implies-regular-obstacle}.  The $C^{1,\frac{1}{2}}$ optimal regularity for the Signorini problem is the reason for the $C^{1,\frac{1}{2}-}$ regularity which appears here, and is likely (almost) optimal.  The idea of the argument, which is based on a compactness principle using a special Harnack inequality for flat solutions, goes back to the work of De Silva \cite{DeSilva}.

It is important for us to obtain a full regularity result without the flatness assumption.  Of course some assumption is still necessary, singular solutions exist in higher dimensions, and fitting with the strongly star-shaped geometry we study later in the paper we will consider obstacle solutions satisfying a cone monotonicity condition.  For the statement define
\begin{equation}
\textup{Cone}(e, \theta):= \{e': e'\cdot e \geq 1-\theta\} \ \hbox{ for } \ e \in S^{d-1} \ \hbox{ and } \ \theta \in (0,1).
\end{equation}

\begin{theorem}
\label{t.bernoulli-reg}
 Suppose $O$ is a bounded set with $C^{1,\alpha}$ boundary, and that $u$ solves either \eref{obstacle-above} and \eref{obstacle-above-hyp} or \eref{obstacle-below} and \eref{obstacle-below-hyp} in $B_1$. Assume also that there are $e \in S^{d-1}$ and $1\geq c_0>0$ so that $u$ is monotone increasing in the directions of the cone $\textup{Cone}(e, c_0)$.
 
 Then for any $0 < \beta < \min\{\alpha,\frac{1}{2}\}$ the positivity set $\{u>0\}$ is a $C^{1,\beta}$ domain in $B_{1/2}$ and $u\in C^{1,\beta}(\overline{\{u>0\}} \cap B_{1/2})$.  The $C^{1,\beta}$ norms depend only on $\alpha$, $\kappa$, $c_0$, $d$, and the $C^{1,\alpha}$ norm of $\partial O$. 
\end{theorem}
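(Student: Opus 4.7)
I would prove \tref{bernoulli-reg} in three steps: first extract Lipschitz regularity of the free boundary from cone monotonicity, then classify blow-ups as half-plane solutions, and finally feed uniform flatness into the two flat-implies-smooth results \tref{flat-implies-regular-unconstrained} and \tref{flat-implies-regular-obstacle}.

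\textbf{Step 1 (Lipschitz graph).} The cone monotonicity $u(x+te')\geq u(x)$ for all $e' \in \textup{Cone}(e,c_0)$ and $t \ge 0$ immediately implies that each superlevel set $\{u\geq s\}$ is the epigraph in direction $e$ of a Lipschitz function with constant depending only on $c_0$; in particular $\partial\{u>0\}\cap B_{3/4}$ is a Lipschitz graph. Combined with the two-sided slope bounds from \eref{obstacle-below} or \eref{obstacle-above} together with \eref{obstacle-below-hyp} or \eref{obstacle-above-hyp}, this yields uniform non-degeneracy and Lipschitz estimates for $u$ itself, so all blow-up sequences admit subsequential limits with Hausdorff-convergent free boundaries.

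\textbf{Step 2 (Blow-up classification).} Fix $x_0\in\partial\{u>0\}\cap B_{1/2}$ and consider $u_r(x):=u(x_0+rx)/r$. Two cases arise:
\begin{enumerate}[label=(\alph*)]
\item If $x_0\notin\partial O$, every limit $u_\infty$ is a global solution of the unconstrained Bernoulli problem \eref{bernoulli-unconstrained}, cone monotone in $\textup{Cone}(e,c_0)$, with $\kappa\leq |\grad u_\infty|\leq \kappa^{-1}$.
\item If $x_0\in\partial O$, the $C^{1,\alpha}$ hypothesis on $\partial O$ makes the blown-up obstacles $(O-x_0)/r$ converge locally uniformly to the half-space $H_{e_0}=\{y: y\cdot e_0>0\}$, where $e_0$ is the inward normal to $\partial O$ at $x_0$. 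The limit $u_\infty$ then solves the corresponding obstacle problem with obstacle $H_{e_0}$, and still inherits cone monotonicity and slope bounds.
\end{enumerate}
I claim that in both cases $u_\infty$ is a half-plane solution $u_\infty(y)=\beta(y\cdot\nu)_+$ with $|\nu|=1$ and $\nu\in\textup{Cone}(e,c_0)$. In case (a) this is the standard classification: cone monotonicity forces $\partial\{u_\infty>0\}$ to be a Lipschitz graph over $e^\perp$, and then by a sliding argument with the half-plane comparison barrier and the strong maximum principle the graph must be a hyperplane, on which the slope condition fixes $\beta$. In case (b), the sliding argument is adapted by using the half-space obstacle as an additional barrier: when $\nu\neq e_0$ one shows the free boundary detaches and reduces to case (a) outside $H_{e_0}$, while compatibility with the obstacle constraint (one-sided slope on the contact) forces $\beta=1$ or $\beta(e_0\cdot\nu)\leq 1$ according to whether we are in the below or above case.

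\textbf{Step 3 (Uniform flatness and flat-implies-smooth).} From Step 2, for every $\epsilon>0$ and every $x_0$ there exists $r(x_0,\epsilon)>0$ with
\begin{equation*}
((x-x_0)\cdot\nu_{x_0}-\epsilon r)_+\leq u(x)\leq ((x-x_0)\cdot\nu_{x_0}+\epsilon r)_+ \quad \text{in } B_r(x_0).
\end{equation*}
A standard compactness/covering argument on the compact set $\partial\{u>0\}\cap\overline{B_{1/2}}$ upgrades this to a uniform radius $r_0=r_0(\epsilon)>0$. Fixing $\epsilon\leq\epsilon_0$ small from Theorems \ref{t.flat-implies-regular-unconstrained} and \ref{t.flat-implies-regular-obstacle}, I apply \tref{flat-implies-regular-unconstrained} at free boundary points in $B_{1/2}\setminus\overline{O}$ and \tref{flat-implies-regular-obstacle} at points of $\partial'\Lambda$; continuity of the free boundary (Step 1) handles the transition across the contact edge. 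The weaker exponent $\min\{\alpha,\tfrac12\}-$ from \tref{flat-implies-regular-obstacle} dominates, giving the $C^{1,\beta}$ conclusion with the stated dependence of constants.

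\textbf{Main obstacle.} The delicate step is the blow-up classification (Step 2(b)) at contact edge points $x_0\in\partial'\Lambda$. For the obstacle from below the free boundary can detach from $\partial O$ with a Signorini-type cusp, so one must rule out blow-ups of the form $\textup{Re}((y_1+iy_d)^{3/2})$-like corrections and confirm that cone monotonicity in $\textup{Cone}(e,c_0)$ actually forces the homogeneous limit to be linear. The argument requires carefully exploiting both the cone direction $e$ and the obstacle normal $e_0$, and matching the two uses of flatness (unconstrained away from $\partial O$, obstacle-type on $\partial'\Lambda$) so that the resulting flatness modulus is uniform across a neighborhood of the contact edge; this is precisely where the $C^{1,\alpha}$ hypothesis on $\partial O$ enters the exponent.
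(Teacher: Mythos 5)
Your high-level scaffolding (blow-up classification followed by flat-implies-smooth) matches the paper's strategy, but the two key steps are handled differently and one of them has a gap.

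\textbf{Step 2(b): a different and underdeveloped route.} To get the half-plane blow-up at a contact edge point $x_0 \in \partial'\Lambda$, the paper does not classify global solutions with a half-space obstacle. Instead it uses (in \lref{contact-blowup}) that since $\{u>0\} \supset O$ and $\partial O$ is $C^{1,\alpha}$, the point $x_0$ is an inner-regular boundary point of a region where $u$ is positive harmonic, so classical potential theory (Caffarelli--Salsa, Lemma 11.17) already furnishes a \emph{non-tangential} slope $\nabla u(x_0)$ and the asymptotic $u(x)=(\nabla u(x_0)\cdot(x-x_0))_+ + o(|x-x_0|)$ non-tangentially in $O$. Cone monotonicity is then used only to extend the smallness of $u$ from the slab $\{x_d = \ep r\}$ downward to $\{x_d \le \ep r\}$, upgrading the non-tangential limit to a locally uniform one. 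No sliding or Liouville-type argument for the constrained problem is needed. Your proposed sliding argument is not worked out: the generic and hardest configuration is $\nu = e_0$ (free boundary tangent to $\partial O$ at the contact), and your claim that when $\nu \neq e_0$ "the free boundary detaches and reduces to case (a)" does not address it. You also conflate two levels of the analysis: the $\mathrm{Re}((y_1+iy_d)^{3/2})$-type Signorini corrections appear in the $O(\ep)$ term of the expansion \eref{signorini-expansion}, not in the zeroth-order blow-up of $u$ at a contact point, which \emph{is} a half-plane by the paper's argument; worrying about ruling them out at the blow-up stage is misplaced.

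\textbf{Step 3: a genuine quantitative gap.} A covering argument on the compact set $\partial\{u>0\}\cap\overline{B_{1/2}}$ for a fixed $u$ only produces a flatness radius $r_0(\ep,u)$ that depends on $u$. The theorem requires the $C^{1,\beta}$ norms to depend only on $\alpha$, $\kappa$, $c_0$, $d$, and the $C^{1,\alpha}$ norm of $\partial O$, so the flatness radius must be \emph{universal} over the whole class of solutions. The paper achieves this in Lemma 3.8 by a contradiction-compactness argument over \emph{sequences} $u_k$, $O_k$, $r_k\to 0$: a limit $u$ exists, its regularity and the flat-implies-smooth theorems give flatness at a definite scale, and this is pulled back to $u_k$ for $k$ large. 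Without this sequential compactness step your constants are not universal and the theorem as stated is not proved.

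\textbf{What is fine.} Your Step 1 and the use of \lref{lip-bdry-nondegen} for nondegeneracy along Lipschitz boundaries, your Step 2(a) sliding classification in the unconstrained case, and the final appeal to \tref{flat-implies-regular-unconstrained} and \tref{flat-implies-regular-obstacle} with the exponent $\min\{\alpha,\tfrac12\}-$ dominating all match the paper's intent.
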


\begin{remark}\label{r.signorini-optimal}
Chang-Lara and Savin \cite{ChS} prove the \emph{optimal} $C^{1,\frac{1}{2}}$ regularity in the case when the obstacle is $C^{1,\alpha}$ regular for some $1/2 < \alpha \leq 1$. This matches the regularity of the thin obstacle/Signorini problem, which appears as the first order term in the asymptotic expansion for $\ep$-flat solutions in the flatness parameter $\ep$.  Since our obstacles are generated from the evolution itself and therefore have at most $C^{1,\frac{1}{2}}$ regularity after the first monotonicity change, obtaining the optimal regularity is more delicate in our problem.  As shown in \cite{RulandShi} there can be logarithmic losses of regularity at this critical scaling. To avoid getting into these technical details we aim to establish the (almost) optimal regularity $C^{1,\frac{1}{2}-}$.
\end{remark}

We need to establish the initial flatness in order to apply \tref{flat-implies-regular-unconstrained} and/or \tref{flat-implies-regular-obstacle} \cite{ChS}. This is a bit different in our case from the way that initial flatness is established in \cite{ChS} (minimal supersolutions) or \cite{FerreriVelichkov} (one-sided energy minimizers).

The first difference is in the obstacle from below case. This issue appears in \lref{contact-blowup}, where we establish the initial flatness of $u$ at points of $\partial'\Lambda$ based on the existence of a non-tangential slope. In the case of the obstacle from above \eref{obstacle-above} the positive set of $u$ lies inside of $O$ which has smooth boundary. This means that a non-tangential gradient of $u$ at the free boundary point in contact with $\partial O$ can describe the leading-order behavior of $u$ near the point in the entire positive set.  This is no longer true with \eref{obstacle-below}, where the positive set contains $O$. This motivates the cone monotonicity hypothesis on $u$ in the obstacle from below case.  

The other difference is in the obstacle from above case.  Unlike \cite{ChS}, who consider minimal supersolutions and energy minimizers, we are interested to study maximal subsolutions.  Non-degeneracy at the free boundary is not known in general for maximal subsolutions.  However, in the case when the free boundary is a Lipschitz graph, in particular in the cone monotone case, non-degeneracy does hold for all viscosity solutions including the maximal subsolution, see \lref{lip-bdry-nondegen} below.  This motivates the cone monotonicity hypothesis on $u$ in the obstacle from above case. 

In both cases the cone monotonicity hypothesis is probably overly strong, however it suffices for the applications in this paper.  The possibility of a more general regularity result is an open question.

We also remark that Lipschitz regularity is easier than non-degeneracy and just follows from the viscosity supersolution property \eref{obstacle-above-hyp}, see \lref{one-phase-lipschitz} below.

\subsection{Initial free boundary flatness}

 In order to obtain the initial flatness we will show the blow-up limit at a free boundary point on the contact set $\Lambda$ exists and is a half-planar supersolution of the unconstrained problem \eref{bernoulli-unconstrained}.

\begin{lemma}\label{l.contact-blowup}
Assume that $u$ is a solution of \eref{obstacle-below} and \eref{obstacle-below-hyp} (or \eref{obstacle-above} and \eref{obstacle-above-hyp}) in $B_1$ which is monotone with respect to the directions of a cone $\textup{Cone}(e,c_0)$, then
 \begin{equation}\label{e.loc-unif-convergence-blowup}
 \lim_{r \to 0} \frac{u(rx)}{r} = (\grad u(0) \cdot x)_+ \ \hbox{ locally uniformly in $\R^d$}
 \end{equation}
 with some gradient $c(d,\kappa)< |\grad u(0)| \leq 1$ (resp. $1 \leq |\grad u(0)|\leq C(d,\kappa)$).  In particular, for any $\e>0$ there exists $r_0>0$ sufficiently small (depending on $u$) so that for any $r \leq r_0$
\begin{equation}\label{e.initial-flatness1}
 |\grad u|(0)(x_d - \ep r)_+  \leq u(x) \leq |\grad u|(0) (x_d + \e r)_+ \ \hbox{ in } \ B_{r}. 
\end{equation}
 \end{lemma}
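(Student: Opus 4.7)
I would approach this by blow-up compactness together with a Hopf-type identification of the non-tangential slope.

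\emph{Compactness of rescalings.} Define $u_r(x) := u(rx)/r$. In the obstacle-below case the gradient bound $|\grad u|\leq 1$ built into \eref{obstacle-below} combined with \lref{one-phase-lipschitz} yields a uniform Lipschitz bound for $\{u_r\}$; in the obstacle-above case the same follows from \eref{obstacle-above-hyp} and \lref{one-phase-lipschitz}. For non-degeneracy: in the below case the viscosity inequality \eref{obstacle-below-hyp} gives $\sup_{B_\rho(y)}u \geq c\rho$ at every free boundary point $y$ by a standard Harnack-type argument; in the above case the cone monotonicity hypothesis makes $\partial\{u>0\}$ a Lipschitz graph near $0$, so \lref{lip-bdry-nondegen} yields the same conclusion. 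By Arzelà--Ascoli, any sequence $r_n\to 0$ has a subsequence along which $u_{r_n}\to u_0$ locally uniformly on $\R^d$ with Hausdorff convergence of positivity sets and free boundaries on compacts.

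\emph{Identifying the limit as a half-plane.} Since $\partial O$ is $C^{1,\alpha}$ with inward normal $e$ at $0$, the rescaled obstacle satisfies $r^{-1}O\to H_e:=\{x\cdot e>0\}$ in local Hausdorff distance. Passing \eref{obstacle-below} (resp.\ \eref{obstacle-above}) to the limit, $u_0$ is a global cone-monotone viscosity solution of the unconstrained Bernoulli problem \eref{bernoulli-unconstrained}, non-degenerate at its free boundary, with $\{u_0>0\}\supset H_e$ (resp.\ $\{u_0>0\}\subset \overline{H_e}$). Cone monotonicity alone forces $\partial\{u_0>0\}$ to be the graph of a Lipschitz function over $\partial H_e$. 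I would then argue this graph must coincide with $\partial H_e$ itself: in the below case the non-contact condition $|\grad u_0|=1$ on any portion of $\partial\{u_0>0\}$ strictly inside $\{x\cdot e<0\}$, combined with the contact bound $|\grad u_0|\leq 1$ on $\partial\{u_0>0\}\cap\partial H_e$, harmonicity, and cone monotonicity, is incompatible with any strict wedge by a sliding comparison against the half-plane barrier $(x\cdot e)_+$; the above case is symmetric. Hence $\{u_0>0\}=H_e$, and by the half-space Liouville theorem (bounded linear growth, harmonic, vanishing on the boundary hyperplane), $u_0(x)=\alpha(x\cdot e)_+$. The bounds $\alpha\in(c(d,\kappa),1]$ (resp.\ $[1,C(d,\kappa))$) follow by passing the one-sided gradient hypotheses and the non-degeneracy to the limit.

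\emph{Uniqueness of the slope and flatness.} For uniqueness of $\alpha$ across subsequences I would use a Hopf boundary point argument: in either case $u$ is harmonic on a small half-ball of the form $B_\rho(0)\cap H_e$ with $u(0)=0$ --- in the below case directly because $u>0$ throughout $O$, and in the above case via non-degeneracy and the interior geometry forced by cone monotonicity. Hence $\alpha := \lim_{t\to 0^+}u(te)/t$ exists and is positive, and cone monotonicity together with the obstacle containment pins the direction of the tangent half-plane to $e$, so every subsequential blow-up equals $\alpha(x\cdot e)_+$ with the same $\alpha$. This proves \eref{loc-unif-convergence-blowup}. The pointwise flatness \eref{initial-flatness1} then follows by picking $r_0$ so that $\|u_{r_0}-\alpha(x\cdot e)_+\|_{L^\infty(B_1)}<\alpha\e$ and rescaling.

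The main obstacle is the flattening in the second step: cone monotonicity alone only delivers a Lipschitz cone for $\partial\{u_0>0\}$, and ruling out a genuine wedge-shaped tangent requires exploiting the jump between the contact and non-contact gradient regimes across $\partial'\Lambda$. This is also the place where the $C^{1,\alpha}$ regularity of $\partial O$ genuinely enters (to identify the obstacle blow-up as a half-space), and the above and below cases require slightly different barriers.
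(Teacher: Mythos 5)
Your proposal takes a genuinely different route from the paper, and the key step has a gap.

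The paper's proof does not use blow-up compactness at all. Because $\partial O$ is $C^{1,\alpha}$ and $\{u>0\}\supset O$ (resp.\ $\{u>0\}\subset \overline{O}$), the function $u$ is positive and harmonic in the $C^{1,\alpha}$ domain $O$ and vanishes at $0\in\partial O$. Boundary regularity for harmonic functions in $C^{1,\alpha}$ domains (\cite[Lemma 11.17]{CaffarelliSalsa}, which extends to H\"older-coefficient settings) \emph{directly} furnishes a non-tangential slope $\nabla u(0)$ parallel to the inward normal $e$, with $u(x) = (\nabla u(0)\cdot x)_+ + o(|x|)$ non-tangentially in $O$. Non-degeneracy and the Lipschitz bound pin $|\nabla u(0)|$ to the stated interval. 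Cone monotonicity is then used \emph{only} to propagate the smallness $u\leq 2|\nabla u(0)|\ep r$ from the hyperplane $\{x_d=\ep r\}\cap B_r$ down to $\{x_d\leq \ep r\}\cap B_r$, upgrading the non-tangential expansion to locally uniform convergence of the full blow-up. This avoids any Liouville-type classification of global solutions and gives the (non-subsequential) limit for free.

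The gap in your approach is exactly where you flag ``the main obstacle''. After extracting a subsequential limit $u_0$ you must show $\{u_0>0\}=H_e$, but cone monotonicity only yields a Lipschitz graph through the origin, and you offer no real proof that a genuine wedge is ruled out---you appeal to ``a sliding comparison against the half-plane barrier'' without specifying what is slid, from where, or why the touching gives a contradiction. Since $u_0$ is only a \emph{supersolution} of the unconstrained Bernoulli problem in the below case (and a subsolution in the above case), and the comparison happens at infinity for global solutions, this step is not routine; a proper Liouville theorem for cone-monotone obstacle Bernoulli solutions would be needed, and nothing like Weiss monotonicity is available here. Moreover you are missing the observation that renders all this unnecessary: the $C^{1,\alpha}$ obstacle already sits inside (or outside) the positivity set near $0$, so the non-tangential derivative exists \emph{before} any blow-up, by standard boundary regularity of harmonic functions. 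Your Hopf argument in the uniqueness step is actually close to this---but you use it after the fact to pin down $\alpha$ rather than at the outset to produce the expansion, and as written it is stated for the flat half-ball $B_\rho(0)\cap H_e$ rather than for the genuinely $C^{1,\alpha}$ region $O\cap B_\rho(0)$, which is where the boundary Harnack machinery is actually invoked.
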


 \begin{proof} We will just consider the obstacle from below \eref{obstacle-below}, the obstacle above case is in \cite[Lemma 2.6]{ChS}. The idea is that the interior ball condition furnished by the regular obstacle gives a non-tangential blow-up limit and then the cone monotonicity upgrades this to a local uniform limit on the whole space.
 
 Note that $u$ is harmonic in its positivity set, $\{u>0\} \supset O$ in $B_1$ and $O$ is a half-space in $B_1$.  Therefore $0$ is an inner regular point so there is a slope $\grad u(0)$, parallel to the inward normal $e_d$ to $O$ at $0$, such that
 \[
 u(x) = (\grad u(0) \cdot x)_+ + o(|x|) \hbox{ as } x\to 0 \hbox{ non-tangentially in $O$. }
 \]
 See \cite[Lemma 11.17]{CaffarelliSalsa} for the proof, which does extend to the case of H\"older continuous coefficients. By non-degeneracy, \lref{lip-bdry-nondegen}, and the Lipschitz estimate, standard recalled in \lref{one-phase-lipschitz} below, $|\grad u(0)|$ is bounded from below away from zero, and from above.

Here is where we need the cone monotonicity property. The non-tangential information, by itself, does not give us any control in $B_1 \setminus O$.  

From the non-tangential limit, for any $\e>0$, there is $r_0>0$ such that 
\[ u(x) \leq 2|\grad u(0)|\ep r  \ \hbox{ on } \ \{x_d = \ep r\} \cap B_r \ \hbox{ for all } \ r \leq r_0.\]
Since $u$ is monotone increasing in the $e_d$ direction also
\[ u(x) \leq 2|\grad u(0)|\ep r  \ \hbox{ on } \ \{x_d\leq \ep r\} \cap B_r \ \hbox{ for all } \ r \leq r_0. \] 
See \fref{cone-mon-app}. 

 \begin{figure}[t]
  \begin{tikzpicture}[scale = 1.5]

\filldraw[white] (-1,0) -- (1,0) -- (1,-1) -- (-1,-1);
\filldraw[gray!30!white] (-1,0) -- (1,0) -- (1,1) -- (-1,1);
\draw[thick, dashed] (-1,0)--(1,0);
\draw (1,0) -- (0,0) .. controls (-.5,0) and ($(-1,-.3)+ (.1,.1)$) .. (-1,-.3);
\draw[dotted] (-1,.3) -- (0,0) -- (1,.3);

\filldraw[black] (0,0) circle (.025);

\node[right] at (1,0) {$x_n = 0$};
\node[below] at (0,0) {$0$};
\node[below] at (-1,-.3) {$\partial \{u>0\}$};
\node at (0,.5) {$O$};

\end{tikzpicture}
  \caption{Asymptotic expansion in non-tangential cone plus monotonicity also gives control in $\{x_n \leq 0\}$.}
  \label{f.cone-mon-app}
  \end{figure}
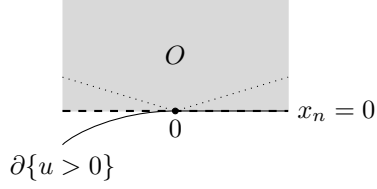

  Thus $ \lim_{r \to 0} \frac{u(rx)}{r} = 0$ for $x_d \leq 0$ and we have upgraded the non-tangential limit to local uniform convergence of the blow-up sequence \eref{loc-unif-convergence-blowup}. Then the uniform stability of viscosity solutions implies that $|\grad u(0)| \leq 1$.
  
  Using non-degeneracy again we can prove the convergence of the free boundaries as well \eref{initial-flatness1}.
  
  \end{proof}

 If we use \lref{contact-blowup} as stated then the initial flatness radius $r_0$ would depend on $u$ is a non-universal way.  In turn the $C^{1,\beta}$ norm of the solution depends on the initial flatness, and would also depend on $u$ in a non-universal way.  Next we show, using the flat implies smooth results, that $r_0$ is actually universal.
 \begin{lemma}
Assume that $u$ is a solution of \eref{obstacle-below} and \eref{obstacle-below-hyp} (or \eref{obstacle-above} and \eref{obstacle-above-hyp}) in $B_1$ which is monotone with respect to the directions of a cone $\textup{Cone}(e,c_0)$. For any $\e>0$ there exists $r_0>0$ sufficiently small, depending only on $\ep$, $c_0$, $\kappa$, $d$ and the $C^{1,\alpha}$ property of $O$, so that for any $r \leq r_0$ there is $\kappa \leq {q} \leq 1$ (resp. $1 \leq {q} \leq \kappa^{-1}$) so that
\begin{equation}\label{e.initial-flatness2}
{q} (x_d - \ep r)_+  \leq u(x) \leq {q} (x_d + \e r)_+ \ \hbox{ in } \ B_{r}. 
\end{equation}
\end{lemma}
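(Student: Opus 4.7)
The plan is to combine a compactness argument with the flat-implies-smooth iteration in \tref{flat-implies-regular-unconstrained} and \tref{flat-implies-regular-obstacle} to upgrade the non-universal radius produced by the previous lemma into a universal one. The previous lemma already gives, for each individual $u$, a flatness of the form \eref{initial-flatness1} at some non-universal scale $r_0(u)>0$, together with a slope $|\grad u|(0) \in [\kappa,1]$ (resp.\ $[1,\kappa^{-1}]$). Once flatness at \emph{one} scale is established with flatness parameter below the threshold $\ep_0$ of the flat-implies-smooth theorem, that theorem delivers flatness at \emph{all} smaller scales with geometrically improving constants, from which the form \eref{initial-flatness2} follows by choosing $r_0$ sufficiently small and setting $q=|\grad u|(0)$.

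The core argument is thus to show that the \emph{initial} flatness scale can be chosen universally. I would argue by contradiction: assume no such $r_0$ exists, then extract a sequence of solutions $u_k$ with obstacles $O_k$, all satisfying the hypotheses with $0\in \partial'\Lambda_k$ and a common $c_0$, $\kappa$, $d$ and $C^{1,\alpha}$ bound on $\partial O_k$, for which the conclusion \eref{initial-flatness2} fails at some scale $r_k$. The family $\{u_k\}$ is precompact in $C^0_{\mathrm{loc}}(\overline{B_1})$ thanks to the universal Lipschitz bound (\lref{one-phase-lipschitz}), and the positivity sets $\{u_k>0\}$ converge in Hausdorff distance on compact subsets because of non-degeneracy (\lref{lip-bdry-nondegen}) combined with cone monotonicity, which forces $\partial \{u_k>0\}$ to be a uniformly Lipschitz graph in a cone direction. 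After passing to subsequences, $u_k \to u_\infty$ locally uniformly and $O_k \to O_\infty$ (with a uniform $C^{1,\alpha}$ bound on $\partial O_\infty$); by stability of viscosity solutions the limit $u_\infty$ still solves the appropriate obstacle problem and satisfies the same cone monotonicity with the same constants.

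Now apply the previous lemma (\lref{contact-blowup}) to $u_\infty$: there is some scale $\rho>0$ and a slope $q_\infty := |\grad u_\infty|(0) \in [\kappa,1]$ (resp.\ $[1,\kappa^{-1}]$) such that
\[
q_\infty (x_d - \tfrac{\ep_0}{4} \rho)_+ \leq u_\infty(x) \leq q_\infty (x_d + \tfrac{\ep_0}{4} \rho)_+ \ \hbox{ in } \ B_\rho,
\]
where $\ep_0$ is the flatness threshold of the relevant flat-implies-smooth theorem. Using the uniform convergence of $u_k\to u_\infty$ and the Hausdorff convergence of free boundaries, for all large $k$ one gets $\ep_0$-flatness of $u_k$ at scale $\rho$ with some slope $q_k \in [\kappa,1]$ (resp.\ $[1,\kappa^{-1}]$). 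Applying \tref{flat-implies-regular-obstacle} (or \tref{flat-implies-regular-unconstrained} if $0$ lies outside $\partial O$) to $u_k/q_k$ at scale $\rho$, one obtains at every smaller scale $r \leq \rho$ the improved flatness
\[
q_k\bigl(x_d - C\ep_0 (r/\rho)^{1+\beta}\, r\bigr)_+ \leq u_k(x) \leq q_k\bigl(x_d + C\ep_0 (r/\rho)^{1+\beta}\, r\bigr)_+ \ \hbox{ in } \ B_r,
\]
and choosing $\rho$ small enough that $C\ep_0(\rho/\rho)^{\beta}\leq \ep$ (which costs only a further universal shrinking) this is \eref{initial-flatness2} at every $r \leq \rho$. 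This contradicts the assumed failure of \eref{initial-flatness2} for $u_k$ at arbitrarily small scales, giving the universal $r_0$.

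The main obstacle I anticipate is the stability/Hausdorff-convergence step: one has to ensure that in passing to the limit $u_\infty$ the point $0$ remains on the \emph{relative} boundary $\partial'\Lambda_\infty$ rather than being swallowed into the interior of $\Lambda_\infty$ or pushed off into the free part of $\partial\{u_\infty>0\}$. The cone monotonicity plus the $C^{1,\alpha}$ convergence of the obstacles $\partial O_k \to \partial O_\infty$ is exactly what forces $0$ to remain a relative contact-boundary point and lets one apply \lref{contact-blowup} to $u_\infty$ in the first place; without it the blow-up could degenerate. Once this is handled, the remaining steps are standard consequences of uniform Lipschitz, non-degeneracy, and the already-proven flat-implies-smooth theorems.
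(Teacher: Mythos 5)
Your compactness argument matches the paper's proof of this lemma essentially step for step: contradiction sequence, uniform Lipschitz and non-degeneracy giving subsequential limits of the solutions, free boundaries, and obstacles, stability of viscosity solutions, flatness of the limit via \lref{contact-blowup}, transfer to $u_k$, and then the flat-implies-smooth iteration. The one slip is in the final bookkeeping: the rescaled flatness deviation at scale $r\le\rho$ should be $C\ep_0(r/\rho)^{1+\beta}\rho$ (ratio $C\ep_0(r/\rho)^\beta\le C\ep_0$), so shrinking $\rho$ does not help and the line ``choosing $\rho$ small enough that $C\ep_0(\rho/\rho)^\beta\le\ep$'' is vacuous as written; one instead fixes the initial flatness target $\ep_0 := \min\{\ep/C,\ \hbox{threshold}\}$ at the outset, or equivalently observes that $C\ep_0(r_k/\rho)^\beta\to 0$ eventually drops below $\ep$, which is the contradiction the paper uses.
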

\begin{proof}
We just consider the obstacle from below case, the obstacle from above case is similar. The proof is by compactness.  Suppose otherwise, then there is $\ep_0>0$, a sequence of $u_k$ solving \eref{obstacle-below} in $B_1$, a sequence of obstacles $O_k$ all uniformly $C^{1,\alpha}$ regular, and a sequence of radii $r_k \to 0$ so that \eref{initial-flatness2} fails with $r = r_k$ and $\ep = \ep_0$.  By uniform Lipschitz regularity and non-degeneracy we can assume that, on compact subsets of $B_1$, the $u_k$ converge uniformly to some $u$, the $\partial \{u_k>0\}$ converge in Hausdorff distance to $\partial \{u>0\}$, and the $O_k$ and $\partial O_k$ also converge in Hausdorff distance to another $C^{1,\alpha}$ domain $O$. Standard viscosity solution arguments show that $u$ solves \eref{obstacle-below} in $B_1$.  This implies that $u$ is in $C^{1,\beta}(\overline{\{u>0\}} \cap B_{1/2})$ so for any $\ep>0$ there is $r_1>0$ so that \eref{initial-flatness2} holds for $r \leq r_1$.  Let $\ep_1>0$ be sufficiently small so that \tref{flat-implies-regular-unconstrained} and \tref{flat-implies-regular-obstacle} hold, and then $r_1>0$ small so that
\[ |\grad u|(0)(x_d - \tfrac{1}{2}\ep_1 r_1)_+  \leq u(x) \leq |\grad u|(0) (x_d + \tfrac{1}{2}\e_1 r_1)_+ \ \hbox{ in } \ B_{r_1} \]
which implies, for $k$ large enough,
\[ |\grad u|(0)(x_d - \ep_1 r_1)_+  \leq u_k(x) \leq |\grad u|(0) (x_d + \e_1 r_1)_+ \ \hbox{ in } \ B_{r_1}. \]
Then, applying either \tref{flat-implies-regular-unconstrained} or \tref{flat-implies-regular-obstacle} as the case may be, implies that for all $0 < r \leq r_1$
\[ |\grad u_k|(0)(x_d - C\ep_1 r^{1+\beta})_+  \leq u(x) \leq |\grad u_k|(0) (x_d + C\e_1 r^{1+\beta})_+ \ \hbox{ in } \ B_{r} \]
which contradicts the hypothesis for large $k$.
\end{proof}

\section{Local laws: weak solutions, regularity, and comparison principle}\label{s.comparison} 

In this section we study the relationship of the obstacle solutions \OQE{} and solutions of the local stability condition \eqref{e.stability-condition-intro} and the dynamic slope condition \eqref{e.dynamic-slope-condition-intro}, especially in the star-shaped setting.  We establish regularity of obstacle solutions using the results of \sref{bernoulli-reg}.  The central element of a viscosity solutions theory is always the comparison principle.  One of the main results of this section is a comparison principle in the star-shaped setting, \pref{MVS-OVS-comparison}, between an obstacle solution and an arbitrary viscosity solution of \eqref{e.stability-condition-intro} and \eqref{e.dynamic-slope-condition-intro}.

\subsection{Definitions and main results}

Let $F = F(t)$ be a strictly positive $C^1$ function on the time interval $[0,T]$ with a discrete set of critical points, or more precisely,
\begin{align}\label{F-hyp}
F \in C^1([0, T]) \quad \text{and} \quad Z := \{t \in (0, T): F'(t) = 0\} \text{ is discrete}.
\end{align}
In other words, $F$ only changes monotonicity at most finitely many times on any bounded interval.

\subsubsection{Slope condition in the comparison sense} The slope conditions are interpreted in the comparison / viscosity sense.  First we define a notion of sub and superdifferential which is suited to the problem. See also \cite{BardiCapuzzoDolcetta}.

\begin{definition}\label{d.supersubdifferential}
Given a non-negative continuous function $u$ on $U$ define, for each $x_0 \in \partial \Omega(u)\cap U$ 
\begin{align*}
D_+ u(x_0) &= \left\{ p \in  \R^d :  u(x) \leq p \cdot (x-x_0) + o(|x-x_0|) \ \hbox{ in } \  \overline{\Omega(u)}\right\}
\intertext{and}
D_- u(x_0) &= \left\{ p \in  \R^d :  u(x) \geq p \cdot (x-x_0) - o(|x-x_0|)\right\} 
\end{align*}
\end{definition}
This leads to a viscosity notion of slope conditions. This notion is equivalent to the common approach via smooth test functions; see \lref{vs-two-notions-equiv}.
\begin{definition}\label{d.subdifferentials-vs-def}
Suppose $u: U \to [0,\infty)$ is continuous and is subharmonic in $\Omega(u) \cap U$.  Let $G \subset \partial \Omega(u) \cap U$ be a relatively open subset. Then $u$ is a subsolution of 
\[|\grad u|^2 \geq {Q} \ \hbox{ on } G\]
if, for every $x \in G$ and every $p \in D_+u(x)$,
\[|p|^2 \geq {Q}.\]
Similarly, if $u: U \to [0,\infty)$ is continuous and is superharmonic in $\Omega(u) \cap U$ and $G \subset \partial \Omega(u) \cap U$ is a relatively open subset, then $u$ is a supersolution of 
\[|\grad u|^2 \leq {Q} \ \hbox{ on } G\]
if, for every $x \in G$ and every $p \in D_-u(x)$,
\[|p|^2 \leq {Q}.\]
\end{definition}

Finally, let us recall the semicontinuous envelopes $u^*$ and $u_*$ of a function $u: [0, T] \times \overline{U} \to \R$:
\begin{align}\label{envelopes}
u^*(t,x) := \limsup_{(s,y) \to (t,x)} u(s,y),\qquad
u_*(t,x) := \liminf_{(s,y) \to (t,x)} u(s,y),
\end{align}
where $(s,y)$ is always assumed to be in $[0, T] \times \overline U$.
Recall that $u^*$ is USC and $u_*$ is LSC.

We can now precisely define the meaning of the local stability condition \eqref{e.stability-condition-intro} in the viscosity sense.  Note that there is an extra condition, (b) below, on the upper envelope; the asymmetry of the condition will be explained after the definition, see \rref{envelopes-stability} and \rref{envelopes-stability-supersoln} below.

\begin{definition}\label{d.local-stability-visc}
We say that a bounded map $u: [0, T] \to C(\overline U)$ is a \emph{viscosity solution of \eqref{e.stability-condition-intro} on $[0,T] \times U$ in the semicontinuous envelope sense} if 
\begin{enumerate} [label = (\alph*)]
\item\label{part.stab-part-std} $u(t)$ satisfies \eref{stability-condition-intro} for every $t \in [0, T]$.
\item\label{part.local-stab-sub-part} If $p \in D_+ u^*(t_0, x_0)$ for some $x_0 \in \partial\Omega(u^*(t_0))$ then $|p|^2 \geq 1 - \mu_-$. 
\end{enumerate}
\end{definition}
\begin{remark}\label{r.immediate-envelope-properties}
Note that \eref{stability-condition-intro} and uniform boundedness imply that $u(t)$ are uniformly Lipschitz continuous in compact subsets of $U$ by \lref{one-phase-lipschitz}.  Therefore the upper and lower envelopes $u^*(t)$ and $u_*(t)$ are continuous in $x$ for each $t$.  It is also standard that $u^*(t)$ and $u_*(t)$ are respectively subharmonic and superharmonic in their positivity sets.
\end{remark}
\begin{remark}\label{r.envelopes-stability}
Note that if $u: [0, T] \to C(\overline{U})$ such that $u(t)$ is non-degenerate uniformly in $t$ then \partref{local-stab-sub-part} in \dref{local-stability-visc} follow from \partref{stab-part-std}. Indeed, by \lref{vs-two-notions-equiv} we can argue using smooth test functions. Say that $u^*(t) - \phi$ has a strict local maximum at $x_0$ in $\overline{\Omega(u^*(t))}$ and $\Delta \phi(x_0) < 0$. By the uniform nondegeneracy we can deduce that there exists a sequence $t_n \to t$ and $x_n \to x_0$ such that $u(t_n) - \phi$ has a local maximum at $x_n$ in $\overline{\Omega(u(t_n))}$, from which it follows that $u^*(t)$ satisfies \eqref{e.stability-condition-intro}.
\end{remark}
\begin{remark}\label{r.envelopes-stability-supersoln}
A similar argument to \rref{envelopes-stability} can be done for $u_*$ but non-degeneracy is not necessary because the test functions touch from below in $\R^d$ instead of in $\overline{\Omega(u(t))}$. So \partref{stab-part-std} actually directly implies the supersolution analogue to \partref{local-stab-sub-part}: if  $p \in D_- u_*(t_0, x_0)$ for some $x_0 \in \partial\Omega(u_*(t_0))$, then $|p|^2 \leq 1 + \mu_+$.
\end{remark}

\subsubsection{Dynamic slope condition in the comparison sense} In order to formulate the dynamic slope condition \eref{dynamic-slope-condition-intro} we need a weak pointwise sense of positive and negative normal velocity. We will use a notion based on space-time ``light cones" in place of standard barrier functions, see \fref{cone-pics} for a sketch of the geometry. This notion of viscosity solutions is stronger than the usual comparison definition of level set velocity, since we test more free boundary points that have weaker space-time regularity. Nonetheless, in the rate-independent evolution the time variable mostly plays a role of a parameter and so it seems natural to test space and time directions differently.

\begin{figure}
\begin{minipage}{.48\textwidth}
\begin{tikzpicture}[scale = 1]
\filldraw[color = gray!40] (-2,0) -- (0,0) .. controls (-.25,-.25) .. (-2,-1);
\filldraw[pattern=vertical lines] (-1,-1) -- (0,0) -- (1,-1);
\draw[black](0,0) .. controls (-.25,-.25) .. (-2,-1);
\filldraw[black] (0,0) circle (.025);
\draw[black] (-1,-1) -- (0,0) -- (1,-1);
\draw[black,dashed] (-2,0) -- (2,0);

\draw[->] (-2.25,-1) -- (-2.25,0);

\node[right] at (2,0) {$t=t_0$};
\node[left] at (-2.25,-.5) {$t$};
\node at (-1.4,-.3) {$\Omega$};
\node[below] at (0,-1) {$|x-x_0| \leq c (t_0-t)$};
\end{tikzpicture}
\end{minipage}
\begin{minipage}{.48\textwidth}
\begin{tikzpicture}[scale = 1]
\filldraw[color = gray!40] (-2,0) -- (0,0) .. controls (.25,-.25) .. (2,-1) -- (-2,-1);
\filldraw[pattern=vertical lines] (-1,-1) -- (0,0) -- (1,-1);
\draw[black](0,0) .. controls (.25,-.25) .. (2,-1);
\filldraw[black] (0,0) circle (.025);
\draw[black] (-1,-1) -- (0,0) -- (1,-1);
\draw[black,dashed] (-2,0) -- (2,0);

\draw[->] (-2.25,-1) -- (-2.25,0);

\node[right] at (2,0) {$t=t_0$};
\node[left] at (-2.25,-.5) {$t$};
\node at (-1.4,-.3) {$\Omega$};
\node[below] at (0,-1) {$|x-x_0| \leq c (t_0 - t)$};
\end{tikzpicture}
\end{minipage}
\caption{Left: velocity $c$ cone touches $\Omega(t)$ from the outside at $(t_0,x_0)$, interpreted as $V_n(t_0,x_0) \geq c$.  Right: velocity $c$ cone touches $\Omega(t)$ from the inside at $(t_0,x_0)$, interpreted as $V_n(t_0,x_0) \leq - c$. }
\label{f.cone-pics}
\end{figure}
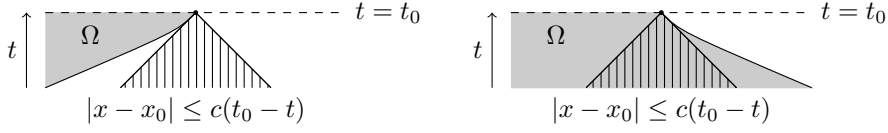
\begin{definition}\label{d.positive-velocity-cone}
\label{d.negative-velocity-cone}
 Given a time varying family of domains $\Omega(t)$, we say that the \emph{outward normal velocity} at $x_0 \in \partial \Omega(t_0)$ is positive and write
\[V(t_0,x_0) > 0\]
if the {\it positive cone condition} holds for some $c > 0$ and $r_0 > 0$, namely
\begin{equation}\label{cone_positive}
\{x: \ |x-x_0| \leq c(t_0-t)\} \subset \Omega(t)^{\complement} \ \hbox{ for } \ \ t_0 - r_0 \leq t < t_0.
\end{equation} 
Similarly, we say that \emph{inward normal velocity} at $x_0 \in \partial \Omega(t_0)$ is negative and write
\[V(t_0,x_0) < 0 \]
if  the {\it negative cone condition} 
\begin{equation}\label{e.cone_negative}
\{x: \ |x-x_0| \leq c(t_0-t)\} \subset \Omega(t) \ \hbox{ for } \ \ t_0 - r_0 \leq t < t_0
\end{equation} 
holds. If we need to clearly specify the domain we write $V(\cdot;\Omega)$.
\end{definition}

Equipped with the notions of nonzero normal velocity and subdifferentials we can now precisely define the meaning of the dynamic slope condition \eqref{e.dynamic-slope-condition-intro} in the viscosity solution sense:

\begin{definition}\label{d.dynamic-slope-condition-visc}
We say that $u: [0, T] \to C(\overline U)$ is a \emph{viscosity solution of \eref{dynamic-slope-condition-intro} on $[0,T] \times U$ in the semicontinuous envelope sense} if
\begin{enumerate} [label = (\alph*)]
\item\label{part.advancing-cond} If $V(t_0,x_0;\Omega(u^*)) >0$ for some $x_0 \in \partial\Omega(u^*(t_0)) \cap U$, then any $p\in D_+u^*(t_0,x_0)$ satisfies $|p|^2 \geq 1+\mu_+.$
 \item\label{part.receding-cond} If  $V(t_0,x_0;\Omega(u_*)) <0$ for some $x_0 \in \partial\Omega(u_*(t_0)) \cap U$, then any $p\in D_-u_*(t_0, x_0)$ satisfies $|p|^2 \leq 1-\mu_-$. 
\end{enumerate}
\end{definition}

Note that since the set $\{V(t_0, \cdot; \Omega(u^*)) > 0\}$ is not necessarily a relatively open subset of the boundary $\partial \Omega(u^*(t_0))$, and analogously for $u_*$, the above definition cannot be easily stated using test functions, \dref{test-fcns-vs-def}.

\subsubsection{Star-shaped comparison principle / equivalence of solution notions} We will show the equivalence of the notions in a strongly star-shaped setting.

\begin{definition}
In the following we say that a set is \emph{strongly star-shaped} if it is star-shaped with respect to all $x$ in a neighborhood of the origin.  A function $u: U \supset\R^d \to \R$ is strongly star-shaped if each of its superlevel sets $\{u > \eta\} \cup U^\complement$ are strongly star-shaped. 
\end{definition}
The important property is that a strongly star-shaped set has a Lipschitz boundary.  This can be seen by drawing the interior and exterior cones created by drawing the rays from every star-shaped center though a point on the boundary, see for example \cite[Figure 2]{FeldmanKim14}.

The main result of this section is the following theorem that asserts that in the strongly star-shaped setting the obstacle solutions are uniquely characterized by the local stability and dynamic slope conditions. 
\begin{theorem}\label{t.equivalence}
If $\R^d \setminus U$ and $\Omega_0$ are both bounded strongly star-shaped sets, $\Omega_0$ is a $C^{1, \alpha}$ domain for some $\alpha > 0$, $F$ satisfies \eqref{F-hyp} and $u(0) \in C(\overline U)$ satisfies \eqref{e.stability-condition-intro}, then     
\[
u \hbox{ is an obstacle solution } \Leftrightarrow u \text{ is a viscosity solution of \eqref{e.stability-condition-intro} and \eqref{e.dynamic-slope-condition-intro},}
\]
in the sense of \dref{local-stability-visc} and \dref{dynamic-slope-condition-visc}.
Furthermore $\Omega(u(t))$ are $C^{1,\beta}$ domains uniformly in $t \in [0,T]$ for some $\beta > 0$.
\end{theorem}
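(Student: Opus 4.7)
The plan is to establish the equivalence in two directions and then extract uniform $C^{1,\beta}$ regularity from \tref{bernoulli-reg}. A preliminary observation is that obstacle solutions preserve strong star-shapedness on each monotonicity interval. On an increasing step $[s,t] \subset [0,T] \setminus Z$, the dilation $u_\lambda(x) := \lambda u(t, x/\lambda)$, $\lambda > 1$, centered at a strong star-shapedness center of $u(s)$, is a supersolution of the stability condition whose boundary trace dominates $F(t)$ and which dominates $u(s)$; minimality of $u(t)$ among such supersolutions forces $u(t) \leq u_\lambda$, which is strong star-shapedness of $u(t)$. Decreasing intervals are handled symmetrically with maximal subsolutions. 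Consequently $\partial \Omega(u(t))$ is uniformly Lipschitz and $u(t)$ is uniformly cone monotone on $[0,T]$.

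With cone monotonicity in hand, each monotonicity step fits the Bernoulli obstacle framework of \sref{bernoulli-reg}. On an increasing step $[s,t]$, after rescaling by $\sqrt{1+\mu_+}$, $u(t)$ is the minimal supersolution above the obstacle $O = \Omega(u(s))$ in the sense of \eref{obstacle-below}; on a decreasing step it is the maximal subsolution fitting \eref{obstacle-above} after rescaling by $\sqrt{1-\mu_-}$. Starting from the $C^{1,\alpha}$ initial data and inducting over monotonicity intervals, \tref{bernoulli-reg} propagates $C^{1,\beta}$ regularity of $\Omega(u(t))$ with $\beta \in (0, \min\{\alpha, 1/2\})$ and constants uniform in $t \in [0,T]$.

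For the forward implication, the static stability \partref{stab-part-std} of \dref{local-stability-visc} is built into the Perron construction, and the envelope condition \partref{local-stab-sub-part} follows from \rref{envelopes-stability} using the uniform non-degeneracy on the Lipschitz free boundary (\lref{lip-bdry-nondegen}); the supersolution envelope is automatic by \rref{envelopes-stability-supersoln}. For the dynamic slope condition, suppose $V(t_0, x_0; \Omega(u^*)) > 0$. The positive cone at $(t_0, x_0)$ forces $x_0 \notin \overline{\Omega(u(s))}$ for $s$ slightly below $t_0$, so $x_0$ is a free boundary point on the unconstrained portion of the obstacle problem of the interval containing $t_0$, at which minimality saturates the slope at $\sqrt{1+\mu_+}$. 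Combined with the $C^{1,\beta}$ regularity and continuity in time, this yields $|p|^2 \geq 1+\mu_+$ for every $p \in D_+ u^*(t_0, x_0)$, which is \partref{advancing-cond}. The receding case is symmetric.

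For the backward implication, let $v$ be any viscosity solution of \eqref{e.stability-condition-intro} and \eqref{e.dynamic-slope-condition-intro} with the same initial data and let $u$ be the obstacle solution. On each monotonicity interval between consecutive points of $Z$, the star-shaped comparison principle \pref{MVS-OVS-comparison} is applied twice---once with $u$ as an obstacle solution dominating $v$ and once the other way---to conclude $u \equiv v$ on that interval. The main obstacle I expect is patching this across the critical times $t_\ast \in Z$: one must verify that strong star-shapedness and $C^{1,\beta}$ regularity are preserved through the transition without a time jump, so that the next monotonicity interval begins from a valid initial state to which \pref{MVS-OVS-comparison} applies. This no-jump property is where the strong star-shapedness of the data is genuinely used, since jumps do occur for non-star-shaped initial regions as illustrated in \sref{examples}.
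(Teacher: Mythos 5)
Your proposal follows the same architecture as the paper's proof: establish preservation of strong star-shapedness via dilation and the minimal/maximal Perron property (cf.\ \lref{star-shapedness}); propagate $C^{1,\beta}$ spatial regularity by inducting over the finitely many monotonicity intervals via \tref{bernoulli-reg} (cf.\ \lref{space-regularity}); show the forward implication using cone monotonicity, non-degeneracy, and the obstacle structure (cf.\ \lref{OvsMvs}); and use \pref{MVS-OVS-comparison} for the backward implication. The one place where you diverge, and where the gap you yourself flag is genuine, is in the backward implication: you apply the comparison principle on each monotonicity interval and then worry about patching across $t_* \in Z$. This is problematic because after concluding $u \equiv v$ on $[0,t_1]$, the lower semicontinuous envelope $v_*(t_1)$ may be strictly below $v(t_1) = u(t_1)$ (if the arbitrary viscosity solution $v$ jumps down at $t_1^+$), so the initial ordering hypothesis of \pref{MVS-OVS-comparison} on $[t_1,t_2]$ is not automatic. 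The paper avoids this entirely by applying \pref{MVS-OVS-comparison} \emph{once} over the whole interval $[0,T]$: assumption (b) in that proposition already allows the regular competitor $w$ (the obstacle solution) to have a finite monotonicity partition, so there is no need to restart the argument at critical times, and the initial ordering is needed only at $t=0$, where it is guaranteed by the hypothesis $u_*(0) = u(0) = u^*(0)$ in \cref{MvsOvs}. If you apply the comparison in one shot rather than interval-by-interval, your concern evaporates and the rest of your argument closes.
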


\begin{proof}
The fact that an obstacle solution is also a solution of \eqref{e.stability-condition-intro} and \eqref{e.dynamic-slope-condition-intro} is shown in \lref{OvsMvs}. Then we deduce the regularity in both space and time of obstacle solutions (\lref{space-regularity}, \cref{derivative-time-cont}).  Finally showing that \eqref{e.stability-condition-intro} and \eqref{e.dynamic-slope-condition-intro} imply an obstacle solution follows a comparison-type viscosity argument using sup- and inf-convolutions between a regular solution and a general solution of \eqref{e.stability-condition-intro} and \eqref{e.dynamic-slope-condition-intro} in \pref{MVS-OVS-comparison}.
\end{proof}

\subsection{Obstacle solutions are motion law solutions} Here we show that, in a general setting without star-shaped assumption, obstacle solutions are also viscosity solutions of the local stability condition \eqref{e.stability-condition-intro} and the dynamic slope condition \eqref{e.dynamic-slope-condition-intro}.

To achieve that, we first precisely state that an obstacle solution has the same monotonicity in time as $F$ and is a viscosity solution of a specific obstacle Bernoulli problem at each time.

\begin{lemma}\label{l.OqeOvs}
The obstacle solution \OQE{} for any $(s,t) \subset (0,T) \setminus Z$ satisfies
   \begin{itemize}
\item[(a)]   If $F$ is monotone increasing on $[s,t]$ then $u(t)$ solves in the viscosity sense
\begin{subequations}\label{e.ovs}
\begin{align}\label{e.ovs-a}
\left\{\begin{aligned}
\Delta u(t) &= 0 &&\hbox{in } \Omega(u(t)) \cap U\\
u(t)&>0 &&\hbox{in } \Omega(u(s)) \cap U\\
|\grad u(t)|^2 &\leq 1+\mu_+ &&\hbox{on } (\partial \Omega(u(t)) \cap \partial \Omega(u(s))) \cap U\\
|\grad u(t)|^2 &= 1+\mu_+ &&\hbox{on } (\partial \Omega(u(t))  \setminus \overline{\Omega(u(s))}) \cap U
\end{aligned}\right.
\end{align}
   \item[(b)] If $F$ is monotone decreasing on $[s,t]$ then $u(t)$ solves in the viscosity sense
\begin{align}\label{e.ovs-b}
\left\{\begin{aligned}
\Delta u(t) &= 0 &&\hbox{in } \Omega(u(t)) \cap U\\
u(t)&=0 &&\hbox{in } \overline{U} \setminus \Omega(u(s))\\
|\grad u(t)|^2 &\geq 1-\mu_- &&\hbox{on } (\partial\Omega(u(t)) \cap \partial \Omega(u(s))) \cap U,\\
|\grad u(t)|^2 &= 1-\mu_- &&\hbox{on } (\partial \Omega(u(t))  \cap \Omega(u(s))) \cap U.
\end{aligned}\right.
\end{align}
\end{subequations}
 \end{itemize}

Under the strong star-shapedness assumption of \tref{equivalence}, any function that satisfies the above, $u(t) = F(t)$, and whose initial data $u(0)$ is a solution of \eqref{e.stability-condition-intro}, is the unique obstacle solution with initial data $u(0)$. 
\end{lemma}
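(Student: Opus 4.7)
The lemma splits naturally into two parts: showing that any obstacle solution satisfies \eqref{e.ovs-a}--\eqref{e.ovs-b}, and the uniqueness characterization under strong star-shapedness. The first part is essentially an unpacking of the Perron's method definition of the obstacle solution.

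For (a), consider $F$ increasing on $[s,t]$ with $(s,t)\cap Z=\emptyset$. By \dref{viscosity_solution}(3), $u(t)$ is the pointwise infimum of all supersolutions of \eqref{e.stability-condition-intro} with boundary data $F(t)$ on $\partial U$ that lie above $u(s)$. Since every competitor lies above $u(s)>0$ on $\Omega(u(s))\cap U$, so does the infimum, giving the obstacle condition. The harmonicity in $\Omega(u(t))\cap U$ and the upper slope bound $|\nabla u(t)|^2\le 1+\mu_+$ on the entire free boundary come from the standard fact that minimal supersolutions inherit the supersolution property. The saturation $|\nabla u(t)|^2 = 1+\mu_+$ at a point $x_0\in \partial\Omega(u(t))\setminus\overline{\Omega(u(s))}$ is the content of minimality: if some $p\in D_-u(t)(x_0)$ had $|p|^2<1+\mu_+$, then $x_0$ has a whole neighborhood disjoint from $\overline{\Omega(u(s))}$, so one could construct a local downward perturbation via a radial barrier with intermediate slope that remains a supersolution and remains above $u(s)$, contradicting minimality. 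The proof of (b) is the symmetric argument for maximal subsolutions with an upward bump perturbation on the subset of $\partial\Omega(u(t))$ that is interior to $\Omega(u(s))$.

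For the uniqueness claim, I would induct on the (finite) number of monotonicity changes of $F$ on $[0,T]$: assuming $v(s)=u(s)$ on a monotone subinterval $[s,t]$, the goal is $v(t)=u(t)$. In the increasing case, \eqref{e.ovs-a} says $v(t)$ is in particular a supersolution of \eqref{e.stability-condition-intro} lying above $v(s)=u(s)$, so by the Perron-minimality defining $u(t)$ we obtain $u(t)\le v(t)$ immediately; the decreasing case is analogous for $v(t)\ge u(t)$.

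The reverse inequality is the main obstacle, and is where strong star-shapedness enters. Both $u(t)$ and $v(t)$ solve the Bernoulli obstacle-from-below problem \eqref{e.obstacle-below} with common obstacle $O=\Omega(u(s))$ and common Dirichlet data $F(t)$; by the inductive hypothesis (strong star-shapedness preserved along the obstacle evolution, proved elsewhere in \sref{comparison}) both positive sets are strongly star-shaped, and hence by \tref{bernoulli-reg} both solutions are $C^{1,\beta}$ up to the free boundary. The plan is a dilation/sliding argument: for $\lambda>1$, let $v_\lambda(x):=\lambda\, v(t)(x/\lambda)$ about a common star-shaped center. For $\lambda$ large, $v_\lambda>u(t)$ strictly on $\overline{U}$; since $v_\lambda$ is a \emph{strict} supersolution of \eqref{e.stability-condition-intro} at its free boundary (its slopes are scaled by $1/\lambda<1$ relative to $v(t)$, so the saturated slope $1+\mu_+$ becomes strictly smaller while the constraint is $|\nabla u|^2\le 1+\mu_+$—and at obstacle-contact points the slope inequality is further helped by the dilation), decreasing $\lambda$ to $\lambda_\ast=1$ cannot produce a first interior touching (Hopf/strong maximum principle on the harmonic difference in the common positivity set) nor a first free-boundary touching (the strict slope inequality is inconsistent with a first contact by $C^{1,\beta}$ regularity); a first touching on $\partial O$ is handled using the $C^{1,\alpha}$ regularity of the obstacle and the saturation condition in \eqref{e.ovs-a}. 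Sending $\lambda\to 1^+$ gives $v(t)\le u(t)$. The core technical difficulty, which I expect to be the crux, is verifying strict slope order at $\lambda>1$ at each type of boundary point, and in particular at contact points with $\partial O$; this is exactly the setting where the regularity theory of \sref{bernoulli-reg} is needed.
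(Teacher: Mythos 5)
Your first part (obstacle solutions satisfy \eqref{e.ovs}) follows the paper's implicit Perron argument and is essentially correct. Your outline for uniqueness correctly identifies the strategy — induct on monotonicity intervals to reduce to uniqueness for a fixed Bernoulli obstacle problem, then run a dilation/first-touching argument — which is exactly what the paper does via \tref{bernoulli-obstacle-uniqueness}. However, your execution has a genuine gap.

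The gap is that you invoke $C^{1,\beta}$ regularity (via \tref{bernoulli-reg}) and strong star-shapedness of \emph{both} positivity sets $\Omega(u(t))$ and $\Omega(v(t))$. But $v$ is an arbitrary function satisfying \eqref{e.ovs}: the star-shapedness preservation of \lref{star-shapedness} is proved only for obstacle solutions (using Perron minimality/maximality), and \tref{bernoulli-reg} requires cone monotonicity, which you get from star-shapedness. The inductive hypothesis $v(s)=u(s)$ gives you star-shapedness of the \emph{obstacle} $O=\Omega(u(s))$, not of $\Omega(v(t))$. So the step where you argue ``no first free-boundary touching (the strict slope inequality is inconsistent with a first contact by $C^{1,\beta}$ regularity)'' is unsupported for $v$. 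The paper's \tref{bernoulli-obstacle-uniqueness} is designed precisely to avoid this: it proves uniqueness assuming only that $K=\R^d\setminus U$ and the obstacle $O$ are strongly star-shaped — \emph{not} the solutions themselves — by regularizing with sup/inf convolutions $u_r, v_r$ and using a barrier construction at the centers of the resulting touching balls. This handles the first-touching in the pure viscosity setting. A second, smaller point: the ``core technical difficulty'' you flag of handling a first touching on $\partial O$ is in fact not an issue — strong star-shapedness of $O$ alone already guarantees the first-contact point $x_0$ lies off $\overline O$, because $O\subset\lambda^{-1}O$ strictly for $\lambda<1$ and $O\cap\partial\Omega(v)=\emptyset$; the paper uses exactly this observation. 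So the fix is to replace the regularity-based touching argument with the sup/inf-convolution comparison of \tref{bernoulli-obstacle-uniqueness}.
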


\begin{proof}
An obstacle solution satisfies \eref{ovs} due to a standard viscosity solution argument and does not require star-shapedness.

The fact that any function satisfying \eref{ovs} is the unique obstacle solution follows from the uniqueness of the obstacle Bernoulli problem in the strongly star-shaped case: an obstacle solution always exists and by the previous step, satisfies \eref{ovs}, and hence by \tref{bernoulli-obstacle-uniqueness} it is the unique solution satisfying \eref{ovs}.  
\end{proof}

\begin{remark}\label{OVS:MVS} 
 Let us point out that, while most of the analysis in this section regards star-shaped initial data, for general initial data we still have a unique obstacle solution by its definition, and this solution also satisfies \eqref{e.ovs} by the lemma above. However, we do not know whether a solution of \eqref{e.ovs} is also a solution of \eqref{e.stability-condition-intro} and \eqref{e.dynamic-slope-condition-intro}, or vice versa, for general initial data. However formally both \eqref{e.dynamic-slope-condition-intro} and \eqref{e.ovs} come with the same dynamic slope condition at space-time regular points of the free boundary.
\end{remark} 

\begin{remark}\label{r.monotonicity}
Note in particular in case of \lref{OqeOvs}(a) $u$ is monotone increasing on $[s,t]$, and in case of \lref{OqeOvs}(b) $u$ is monotone decreasing in $[s,t]$. Given the condition on $F$, it follows that $u$ also only changes monotonicity finitely many times on any bounded interval. 
\end{remark}

\begin{lemma}\label{l.OvsMvs}
If $u: [0,T] \to C(\overline{U})$ is an obstacle solution \OQE{} on $[0,T]$ that is uniformly non-degenerate at its free boundary, then $u$ solves \eqref{e.stability-condition-intro} and \eqref{e.dynamic-slope-condition-intro} in the sense of \dref{local-stability-visc} and \dref{dynamic-slope-condition-visc}, respectively.
\end{lemma}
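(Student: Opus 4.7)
The plan is to leverage the obstacle Bernoulli characterization of $u(t)$ supplied by \lref{OqeOvs} on each monotone interval of $F$, together with the induced time monotonicity of $u$ in \rref{monotonicity}, and to use the uniform non-degeneracy to transfer pointwise information to the semicontinuous envelopes. I would verify parts (a) and (b) of \dref{local-stability-visc}, then parts (a) and (b) of \dref{dynamic-slope-condition-visc}.

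For part (a) of \dref{local-stability-visc}, I induct on the finite sequence of open monotone intervals of $F$ in $(0,T)\setminus Z$. On an interval where $F$ is non-decreasing, \lref{OqeOvs}(a) gives harmonicity in $\Omega(u(t))\cap U$ and the supersolution condition $|\grad u(t)|^2\leq 1+\mu_+$ on all of $\partial\Omega(u(t))\cap U$, plus saturation $|\grad u(t)|^2=1+\mu_+\geq 1-\mu_-$ on the advanced portion $\partial\Omega(u(t))\setminus\overline{\Omega(u(s))}$. On the persistent portion $\partial\Omega(u(t))\cap\partial\Omega(u(s))$, the inclusions $\Omega(u(s))\subset\Omega(u(t))$ and $u(s)\leq u(t)$ imply $D_+u(t)(x_0)\subset D_+u(s)(x_0)$, so the lower bound $|p|^2\geq 1-\mu_-$ for $p\in D_+u(t)(x_0)$ is inherited from the inductive hypothesis on $u(s)$. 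The non-increasing case is symmetric, and $u(0)$ satisfies \eqref{e.stability-condition-intro} by hypothesis. Part (b) of \dref{local-stability-visc} is then immediate from the uniform non-degeneracy hypothesis and the test-function argument sketched in \rref{envelopes-stability}.

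For \dref{dynamic-slope-condition-visc} I focus on part (a); part (b) is the mirror image. Assuming the positive cone condition \eqref{cone_positive} for $\Omega(u^*)$ at some $(t_0,x_0)\in\partial\Omega(u^*(t_0))\cap U$, I first rule out that $F$ is non-increasing on any left-neighborhood of $t_0$. Indeed, \rref{monotonicity} would then force $u$ to be non-increasing on $[t_0-\delta,t_0]$, and the cone condition $u^*(s,\cdot)=0$ on $B(x_0,c(t_0-s))$ would propagate by non-increasing monotonicity, together with the right continuity of the obstacle evolution at $t_0^+$ driven by the continuous $F$, to give $u^*(t_0,\cdot)=0$ in a full neighborhood of $x_0$, contradicting $x_0\in\partial\Omega(u^*(t_0))$. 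Thus $F$ is non-decreasing on some $[t_0-\delta,t_0]$ and \lref{OqeOvs}(a) applies. For $s\in(t_0-\delta,t_0)$ close to $t_0$ the cone condition gives $\dist(x_0,\overline{\Omega(u(s))})\geq c(t_0-s)>0$; the non-decreasing monotonicity together with $x_0\in\partial\Omega(u^*(t_0))$ then yields $x_0\in\partial\Omega(u(t_0))$, either immediately from approximating sequences $(\sigma_k,z_k)\to(t_0,y_n)$ with $\sigma_k\leq t_0$, or via the uniform non-degeneracy ruling out jump points otherwise. Hence $x_0\in\partial\Omega(u(t_0))\setminus\overline{\Omega(u(s))}$ and \eqref{e.ovs-a} gives $|\grad u(t_0)|^2=1+\mu_+$ at $x_0$ in the viscosity sense, so $|p|^2\geq 1+\mu_+$ for every $p\in D_+u(t_0)(x_0)$. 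Finally, any $p\in D_+u^*(t_0,x_0)$ is also in $D_+u(t_0)(x_0)$ since $u(t_0)\leq u^*(t_0)$, both vanish at $x_0$, and $\overline{\Omega(u(t_0))}\subset\overline{\Omega(u^*(t_0))}$; the bound $|p|^2\geq 1+\mu_+$ follows.

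The principal technical difficulty is the careful handling of the envelopes $u^*$ and $u_*$ at monotonicity changes of $F$ and at potential jump times of $u$, in order to ensure that the cone conditions on $\Omega(u^*)$ (resp.\ $\Omega(u_*)$) genuinely describe advancing (resp.\ receding) of $\Omega(u(t_0))$ at $x_0$. This rests on the uniform non-degeneracy hypothesis, the continuity of each $u(t)$ in $x$ noted in \rref{immediate-envelope-properties}, and one-sided time continuity of $u$ along monotone intervals; with these in hand all remaining steps are standard viscosity-solution manipulations.
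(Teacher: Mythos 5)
Your treatment of the local stability condition \eqref{e.stability-condition-intro} (both parts of \dref{local-stability-visc}) matches the paper's proof: same induction on monotone intervals of $F$, same use of \lref{OqeOvs}, the superdifferential inclusion $D_+u(t)(x_0)\subset D_+u(s)(x_0)$ on the persistent boundary, and \rref{envelopes-stability} for the envelope part. The explicit observation that the positive cone condition cannot hold where $F$ is non-increasing on a left-neighborhood is a correct and useful remark that the paper leaves implicit.

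However, there is a genuine gap in your argument for the advancing dynamic slope condition. You claim that the positive cone condition at $(t_0,x_0)\in\partial\Omega(u^*(t_0))$ forces $x_0\in\partial\Omega(u(t_0))$, and then transfer the slope bound by the superdifferential inclusion $D_+u^*(t_0,x_0)\subset D_+u(t_0,x_0)$. This claim can fail when $t_0$ is a jump time of the obstacle solution. An obstacle solution under increasing $F$ is left-continuous but may jump up from the right, so $u^*(t_0)$ is the right limit and $\Omega(u^*(t_0))$ can be strictly larger than $\Omega(u(t_0))$. In the two-annulus example of \sref{examples}, at the jump time the set $\partial\Omega(u^*(t_0))$ contains new ``bridge'' points $x_0$ that lie strictly outside $\overline{\Omega(u(t_0))}$, and the cone condition is satisfied at such $x_0$ because $u(\tau)$ vanishes in a fixed neighborhood for $\tau<t_0$. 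At such a point $D_+u(t_0,x_0)$ is simply undefined (or vacuous), so the inclusion argument collapses. Your parenthetical ``via the uniform non-degeneracy ruling out jump points otherwise'' does not repair this: non-degeneracy does not rule out jump points. The paper avoids this issue by \emph{not} attempting to localize to $\partial\Omega(u(t_0))$: it first deduces $B_r(x_0)\subset\overline{\Omega(u(s))}^\complement$ from the cone condition, obtains $|\grad u(\sigma)|^2\geq 1+\mu_+$ on $\partial\Omega(u(\sigma))\cap B_r(x_0)$ for \emph{all} $\sigma\in(s,t]$ from \eref{ovs-a}, and then transfers this to $u^*$ at $(t_0,x_0)$ using the compactness/test-function argument of \rref{envelopes-stability} together with non-degeneracy: a test function touching $u^*(t_0)$ from above at $x_0$ can be touched by $u(t_n)$ at nearby $x_n\in\partial\Omega(u(t_n))\cap B_r(x_0)$, and one passes to the limit in the slope bound. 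This step is the crux of the proof and is missing from your argument.

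A smaller point: the receding case is not the ``mirror image'' of the advancing case. The paper explicitly notes the asymmetry: the receding condition tests a supersolution property of $u_*$ and, as in \rref{envelopes-stability-supersoln}, the transfer to the lower envelope does \emph{not} require uniform non-degeneracy (which is unavailable in general for maximal subsolutions). Stating it as a mirror image obscures this useful structural fact.
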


\begin{proof}
The state $u(t)$ is harmonic in $\Omega(u(t))$ by definition.  By \rref{immediate-envelope-properties} $u^*$ and $u_*$ are respectively sub and superharmonic in their positivity sets.

Now we aim to check the stability conditions in \dref{local-stability-visc}\partref{stab-part-std}. We first check the lower bound condition $|\nabla u(t)|^2 \geq 1 - \mu_-$ in the viscosity sense on $\partial \Omega(t) \cap U$ by induction on the monotonicity intervals of $F$ provided by assumption \eqref{F-hyp}.  
We know that the condition is satisfied at $t = 0$ by the compatibility of the initial data. Suppose now that we know that $u$ satisfies this condition up to $s \in Z \cup \{0\}$ and consider $t > s$ such that $(s, t) \cap Z = \emptyset$.
\begin{itemize}
\item 
If $F$ is decreasing on $[s, t]$ then the lower bound on $|\nabla u(t)|^2$ follows from \eref{ovs-b} immediately. 
\item 
Otherwise $F$ is increasing on $[s, t]$. In this case $u(t)$ are minimal supersolutions above $u(s)$ for $t_0 \in (s,t]$ and so, by \lref{lip-bdry-nondegen}, they are uniformly non-degenerate. On $(\partial \Omega(t) \setminus \overline{\Omega(s)}) \cap U$ we get the same bound from \eref{ovs-a} since $|\nabla u(t)|^2 = 1 + \mu_+ \geq 1- \mu_-$. 
Finally, on $(\partial \Omega(t) \cap \partial \Omega(s)) \cap U$, as $u(t) \geq u(s)$, any test function touching $u(t)$ from above in $\overline{\{u(t) > 0\}}$ at $x \in (\partial \Omega(t) \cap \partial \Omega(s))\cap U$ also touches $u(s)$ and hence $D_+ u(t, x) \subset D_+ u(s, x)$. Hence $u(t)$ is a solution $|\nabla u(t)|^2 \geq 1 - \mu_-$ at $x$ in the viscosity sense since $u(s)$ is by the induction hypothesis. 
\end{itemize}
By induction we can therefore extend the property to all $t \in [0, T]$ since $Z$ is discrete.
An analogous argument gives $|\nabla u(t)|^2 \leq 1 + \mu_+$.

By the assumption of uniform non-degeneracy, \rref{envelopes-stability} yields \dref{local-stability-visc}\partref{local-stab-sub-part}. We note that the assumption is actually needed only when $F$ is decreasing. When $F$ is increasing, $u(t)$ is a minimal supersolution above $u(s)$ and hence non-degenerate by \lref{lip-bdry-nondegen}.

We now check the remaining dynamic slope conditions. First consider monotone increasing interval $[s,t]$ of $F$ and we aim to check the advancing condition \dref{dynamic-slope-condition-visc}\partref{advancing-cond}. Suppose the positive cone condition \eqref{cone_positive} holds for $x_0 \in \partial \Omega(u^*(t_0)) \cap U$ for some $s < t_0 \leq t$. By the positive cone condition and monotonicity there is a neighborhood $B_{r}(x_0) \subset  \overline{\Omega(u(s))}^\complement$, and thus we have $|\nabla u|^2 \geq 1+\mu_+$ on $\partial \Omega(u)$ in $(s,t] \times B_r(x_0)$ by \eref{ovs-a}.  If $t_0 = t$ then $u^*(t) = u(t)$ because $u(t)$ is a local maximum by increasing monotonicity to the left and decreasing monotonicity to the right, so we can reduce to $t_0 \in (s,t)$. 
Then, by uniform non-degeneracy of minimal supersolutions \lref{lip-bdry-nondegen} and \rref{envelopes-stability}, we can conclude that $|\grad u^*|^2 \geq 1+\mu_+$ on  $\partial \Omega(u^*)$ in $(s,t) \times B_r(x_0)$, in particular it also holds at $(t_0,x_0)$.

Next we check that the receding condition \dref{dynamic-slope-condition-visc}\partref{receding-cond} holds for monotone decreasing interval $[s,t]$ of $F$. The argument is mostly the same with a subtle difference which we need to point out. Suppose the negative cone condition \eref{cone_negative} holds for $x_0 \in \partial \Omega(u_*(t_0)) \cap U$ for some $s < t_0 \leq t$. As before there is a neighborhood $B_{r}(x_0) \subset  \Omega(u(s))$, and thus we have $|\nabla u|^2 \leq 1-\mu_-$ on $\partial \Omega(u)$ in $(s,t] \times B_r(x_0)$ by \eref{ovs-b}.  If $t_0 = t$ then $u_*(t) = u(t)$ so we reduce to the case $t_0 \in (s,t)$. Then by \rref{envelopes-stability-supersoln}, we can conclude that $|\grad u_*| \leq 1 - \mu_-$ on  $\partial \Omega(u^*)$ in $(s,t] \times B_r(x_0)$, in particular it also holds at $(t_0,x_0)$.  Notice that we do \emph{not} need uniform non-degeneracy, which is not known in general for maximal subsolutions, because we are testing a supersolution condition.
\end{proof}

\subsection{Spatial regularity} 
We start with star-shapedness of obstacle solutions.
\begin{lemma}
\label{l.star-shapedness}
Suppose that $u$ is an obstacle solution \OQE{}, $K = \R^d \setminus U$ and $\Omega_0 = \{u_0>0\}$ are strongly star-shaped, and $F$ satisfies \eqref{F-hyp}. Then $u(t)$ is strongly star-shaped for every $t \geq 0$ with respect to the same set of centers as $K$ and $\Omega_0$.
\end{lemma}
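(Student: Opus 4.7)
My plan is to prove the lemma by induction on the (finitely many) monotonicity intervals of $F$, using the obstacle/minimality characterization of $u(t)$ from \lref{OqeOvs} together with a comparison against a dilated copy of the solution. Let $N \subset \R^d$ be a neighborhood of the origin with respect to which both $K := \R^d \setminus U$ and $\Omega_0$ are star-shaped. For fixed $z \in N$ and $\lambda \in (0,1)$ I will write $A_\lambda(x) := z + \lambda(x-z)$. Strong star-shapedness of $u(t)$ with respect to $z$ is equivalent to the pointwise bound $u(t,A_\lambda(x)) \geq u(t,x)$, so my goal reduces to proving this inequality for each $t$, each $z \in N$, and each $\lambda \in (0,1)$.

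On an \emph{increasing} monotone interval $[s,\tau]$ (with the inductive hypothesis at $t=s$), I define $w(t,x) := u(t, A_\lambda(x))$, where I extend $u(t)$ to all of $\R^d$ by $u(t) \equiv F(t)$ on $K$ (consistent with the boundary data and with $u(t)\leq F(t)$ from the maximum principle). I then verify that $w(t)$ is a viscosity supersolution of the advancing obstacle problem \eref{ovs-a} solved by $u(t)$. The boundary value $w(t)=F(t)$ on $\partial U$ follows because $A_\lambda(\partial U) \subset K^\circ$ by star-shapedness of $K$. Harmonicity of $w$ in its positive set is immediate since $A_\lambda$ is affine. The slope bound $|\nabla w|^2 = \lambda^2 |\nabla u|^2 \circ A_\lambda \leq \lambda^2(1+\mu_+) < 1+\mu_+$ on the free boundary makes $w$ a \emph{strict} supersolution there. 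For the obstacle positivity $w(t)>0$ on $\Omega(u(s))\cap U$, I combine time monotonicity of $u$ (\rref{monotonicity}) with the inductive hypothesis on $u(s)$: whenever $A_\lambda(x) \in U$, $w(t,x) = u(t,A_\lambda(x)) \geq u(s,A_\lambda(x)) \geq u(s,x) > 0$, and otherwise $w(t,x)=F(t)>0$. The minimality of $u(t)$ among such supersolutions then yields $u(t)\leq w(t)$, which is the required inequality at time $t$.

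On a \emph{decreasing} monotone interval I run the symmetric argument with $\tilde w(t,x) := u(t, A_\lambda^{-1}(x))$. Since $A_\lambda^{-1}$ moves points further from $z \in K^\circ$, it sends $\overline U$ into itself and $\tilde w$ is well-defined on $\overline U$. The same bookkeeping shows that $\tilde w(t)$ is a viscosity subsolution of the receding obstacle problem \eref{ovs-b}: $\tilde w \leq F(t)$ on $\partial U$ by the maximum principle, $\tilde w$ is harmonic in its positive set, $|\nabla \tilde w|^2 \geq \lambda^{-2}(1-\mu_-) > 1-\mu_-$ on the free boundary, and $\Omega(\tilde w(t)) = A_\lambda(\Omega(u(t))) \subset A_\lambda(\Omega(u(s))) \subset \Omega(u(s))$ by time monotonicity and the inductive hypothesis. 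Maximality of $u(t)$ as a subsolution then gives $u(t) \geq \tilde w(t)$, i.e.\ $u(t,x) \geq u(t, A_\lambda^{-1}(x))$, which after the substitution $y = A_\lambda^{-1}(x)$ is again the star-shapedness inequality.

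The main technical point I expect to be delicate is checking the viscosity super/subsolution property of $w$ (respectively $\tilde w$) across the interface $\{A_\lambda(x) \in \partial U\}$, where the function transitions between the constant value $F(t)$ on the $K$-side and $u(t)\circ A_\lambda$ on the $U$-side. Because $u(t)=F(t)$ exactly on $\partial U$ and $u(t)\leq F(t)$ in $U$, any smooth test function touching $w$ from below at such a point realizes a local maximum of $\phi\leq F(t)$, so the supersolution inequality $\Delta\phi\leq 0$ is automatic there and the interface is never a free boundary of $w$; analogous care is needed for $\tilde w$. Once all these verifications are in place, sending $\lambda \to 1^-$ and varying $z \in N$ upgrades the pointwise bounds to strong star-shapedness of $u(t)$ at every time, completing the induction over the finitely many monotonicity intervals.
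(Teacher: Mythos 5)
Your proof is correct and follows essentially the same approach as the paper: induction on the monotonicity intervals of $F$, with a dilation of the solution as a supersolution (resp.\ subsolution) competitor and the minimality (resp.\ maximality) in Definition~\ref{d.viscosity_solution} yielding the star-shapedness inequality. Your competitor $w$ (extending $u(t,A_\lambda(\cdot))$ by $F(t)$ across the dilated boundary) is a slightly cleaner version of the paper's $v$; the key steps and the use of the inductive hypothesis together with time-monotonicity to verify the obstacle constraint are the same.
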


\begin{proof}
We show the star-shapedness only with respect to the origin assuming that $K$ and $\Omega_0$ are star-shaped with respect to the origin. The proof with respect to other centers can be done analogously by translation.

We will use induction on the monotonicity intervals of $F$. Suppose that $t > 0$ with $(0, t) \cap Z \neq \emptyset$. 
Consider the case $F(t) > F(0)$.
First note that for $\lambda < 1$
\[ v(x) =
\begin{cases}
\min\big(u(t, \lambda x), F(0)\big) & x \in \lambda^{-1} U,\\
F(0) & x \in U \setminus \lambda^{-1} U,
\end{cases}
\]
is a supersolution of \eqref{e.stability-condition-intro}.  Since $\{u(0)>0\}$ is star-shaped and $\{u(t)>0\} \supset \{u(0)>0\}$ also
\[ \lambda^{-1}\{u(t) >0\} \supset \{u(0) >0\}.\]
Thus $v$ is a supersolution of \eqref{e.stability-condition-intro} above $u(0)$ and so
\[ u(t, \lambda \cdot) \geq u(t) \qquad \text{in }\lambda^{-1} U\]
by the minimality of $u(t)$ in the definition of \OQE.
Since $\lambda>0$ was arbitrary, $u(t)$ is star-shaped (as are all of its super-level sets) with respect to $0$.

In the case $F(t) < F(0)$ we consider the dilation with $\lambda > 1$ and easily check that $u(t, \lambda \cdot) \leq u(0)$ in $U$ and $u(t, \lambda \cdot)$ is a subsolution of \eqref{e.stability-condition-intro} and hence $u(t, \lambda \cdot) \leq u(t)$ in $U$ by the maximality of $u(t)$ in the definition of \OQE. In particular, $u(t)$ is star-shaped with respect to $0$.

By translation we have star-shapedness at any point at which $K$ and $\{u_0 > 0\}$ are star-shaped.
Finally by induction on the monotonicity intervals of $F$, we can continue the star-shapedness property up to any $t > 0$.
\end{proof}

An obstacle solution is a solution of the Bernoulli obstacle problem on each monotonicity interval of $F$, where the obstacle is the solution at the beginning of the interval. This allows us to use the regularity of the obstacle problem, \sref{bernoulli-reg}, only a finite number of times to establish the regularity of obstacle solutions at an arbitrary time.

\begin{lemma}
\label{l.space-regularity}
Suppose that $u$ is an obstacle solution \OQE\ with $F$ satisfying \eqref{F-hyp}, $K = \R^d \setminus U$ has $C^2$ boundary, $\Omega_0$ has a $C^{1,\alpha}$ boundary, and both $K$ and $\Omega_0$ are strongly star-shaped. Then for any $0 < \beta < \min\{\alpha,\frac{1}{2}\}$ the domains $\{u(t) > 0\}$ are $C^{1,\beta}$ and $u(t) \in C^{1, \beta}(\overline{\{u(t) > 0\} \cap U})$ uniformly in $t \in [0, T]$. 
\end{lemma}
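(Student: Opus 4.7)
The plan is to induct on the monotonicity intervals of $F$, applying \tref{bernoulli-reg} on each. Assumption \eqref{F-hyp} together with compactness of $[0,T]$ gives a finite partition $0 = t_0 < t_1 < \cdots < t_N = T$ with $F$ strictly monotone on each $[t_{i-1}, t_i]$, so a finite induction suffices. By \lref{star-shapedness} the state $u(t)$ is strongly star-shaped for every $t \in [0,T]$ with respect to the common set of star-centers inherited from $K$ and $\Omega_0$; this produces a cone monotonicity direction $e(x_0)$ and aperture $c_0 > 0$ at each free boundary point $x_0 \in \partial\Omega(u(t)) \cap U$, and since the free boundaries lie in a fixed bounded annular region (between $K$ and a uniform far-field barrier furnished by $\sup_{[0,T]} F$) these constants are uniform in $t$ and $x_0$.

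For the inductive step on $[t_{i-1}, t_i]$, \lref{OqeOvs} identifies $u(t)$ as a viscosity solution of the Bernoulli obstacle problem \eref{ovs-a} (obstacle from below, when $F$ is increasing) or \eref{ovs-b} (obstacle from above, when $F$ is decreasing) with obstacle given by $\Omega(u(t_{i-1}))$. I would verify the slope hypotheses \eref{obstacle-below-hyp} and \eref{obstacle-above-hyp} directly from the stability condition \eref{stability-condition-intro} (already established in \lref{OvsMvs}), which gives $1-\mu_- \leq |\nabla u(t)|^2 \leq 1+\mu_+$ on $\partial \Omega(u(t)) \cap U$, so one may take $\kappa = \sqrt{1-\mu_-}$ and $\kappa^{-1} \geq \sqrt{1+\mu_+}$ throughout. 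Combined with the cone monotonicity above and the $C^{1,\alpha_i}$ regularity of the obstacle from the previous step, \tref{bernoulli-reg} applies locally at each interior free boundary point and yields a uniform interior $C^{1,\beta}$ estimate for $u(t)$ and $\partial\Omega(u(t))$, with constants depending only on the fixed universal data and the $C^{1,\alpha_i}$ norm of the obstacle. Uniformity in $t$ within $[t_{i-1}, t_i]$ is automatic since the obstacle and all constants are fixed on that interval.

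To pass the regularity through the finitely many monotonicity changes without collapsing to $\beta = 0$, I would fix the target exponent $\beta < \min\{\alpha, \tfrac{1}{2}\}$ at the outset and choose a strictly increasing sequence $\beta = \beta_0 < \beta_1 < \cdots < \beta_N < \min\{\alpha, \tfrac{1}{2}\}$. Starting with the $C^{1,\alpha}$ regularity of $\partial \Omega_0$ one applies \tref{bernoulli-reg} with exponent $\beta_{N-1}$ on $[t_0, t_1]$, yielding $C^{1,\beta_{N-1}}$ on that interval; then on $[t_1, t_2]$ the obstacle $\Omega(u(t_1))$ is $C^{1,\beta_{N-1}}$ and the theorem applies with exponent $\beta_{N-2} < \beta_{N-1}$; iterating, after $N$ steps one lands at $C^{1,\beta_0} = C^{1,\beta}$ uniformly on $[0,T]$. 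Global $C^{1,\beta}$ regularity up to $\partial\Omega(u(t)) \cap U$ follows by patching the local estimates over a finite cover of the free boundary.

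The main obstacle is the one-time critical loss of regularity at the first monotonicity change: after a single step the obstacle has only $C^{1,\beta}$ regularity with $\beta < 1/2$, so \tref{bernoulli-reg} cannot be applied with the same exponent and one must work at a strictly worse $\beta'$. Because $F$ has only finitely many critical points on $[0,T]$, the slack $\beta_{N-i+1} - \beta_{N-i}$ can be chosen small enough that the final exponent is still the prescribed $\beta$; this is the step that genuinely requires \eqref{F-hyp} rather than merely piecewise monotonicity. A secondary technical point is the verification of uniform cone monotonicity aperture from strong star-shapedness: this requires that the star-centers lie in the interior of $K$ at positive distance from the free boundary, which holds uniformly in $t$ because $\Omega(u(t))$ is contained in a fixed compact region by the maximum principle and the bound on $F$.
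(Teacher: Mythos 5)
Your proposal is correct and follows essentially the same approach as the paper: induct on the finitely many monotonicity intervals of $F$, use \lref{star-shapedness} to produce uniform cone monotonicity from strong star-shapedness, identify $u(t)$ via \lref{OqeOvs} as a solution of the appropriate Bernoulli obstacle problem on each interval, apply \tref{bernoulli-reg}, and absorb the finitely many critical losses of exponent into a monotone chain of exponents terminating at the prescribed $\beta$. Your explicit verification of the slope hypotheses \eref{obstacle-below-hyp}/\eref{obstacle-above-hyp} and of the uniform cone aperture is a reasonable amplification of details the paper's proof leaves implicit, but it does not change the argument's structure.
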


\begin{proof}

Due to the regularity of $K = \R^d \setminus U$ the function $u(t)$ is in $C^2(\{u(t) > 0\} \cup \partial U)$. We need to show the regularity at the free boundary of $u(t)$.

By \lref{star-shapedness} the sets $\{u(t) > 0\}$ are uniformly strongly star-shaped, and therefore they are uniformly Lipschitz in $[0, T]$. Let $t > 0$ be such that $(0, t) \cap Z = \emptyset$. Therefore by the regularity of the Bernoulli obstacle problem \tref{bernoulli-reg} with $C^{1, \alpha}$ obstacle $\{u_0 > 0\}$ and any $ 0 < \beta_0 < \min\{\alpha,\frac{1}{2}\}$ the domain
$\{u(t) > 0\}$ is $C^{1,\beta_0}$ and $u(t) \in C^{1, \beta_0}(\overline{\{u(t) > 0\} \cap U})$.

Consider now $t > 0$ such that $(0, t) \cap Z = \{s_1\}$. By the previous step $\{u({s_1}) > 0\} \in C^{1, \beta_0}$. Applying the regularity of the Bernoulli obstacle problem with $\alpha = \beta_0$ for any $0 < \beta_1 < \beta_0$ we find that $u(t)$ has regularity $C^{1, \beta_1}$.

We can continue inductively on the monotonicity intervals of $F$ up to $t = T$, just taking a strictly decreasing sequence of $\min\{\alpha,\frac{1}{2}\} > \beta_0>\beta_1> \ldots > \beta_{\# (0, T) \cap Z} = \beta$.
\end{proof}

\subsection{Time regularity} The regularity in space \lref{space-regularity} of a star-shaped obstacle solution yields in particular that $u(t)$ is Lipschitz locally uniformly in time. This in particular implies (with the nondegeneracy of the solutions of the Bernoulli problem) that its support moves in a Lipschitz way as well.

\begin{lemma}
\label{l.time-regularity}
Suppose that $u$ is an obstacle solution \OQE{}, $K = \R^d \setminus U$ has $C^2$ boundary, and $K$ and $\Omega_0 = \{u_0>0\}$ are strongly star-shaped and bounded, and $u_0$ satisfies \eqref{e.stability-condition-intro}. Then
\begin{align}
\label{lip-in-time}
|u(t, x) - u(s, x)| \leq C \|F'\|_\infty |t - s| \ \hbox{ in } \ U
\end{align}
where $C = C(L, R, \min F)$ with $L$ the Lipschitz constant of $u$ in space and $R := \max_{x \in K \cup \Omega_0} |x|$.
\end{lemma}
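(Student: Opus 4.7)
My plan is to compare $u(t)$ to a dilation of $u(s)$. Using \eqref{F-hyp} I first reduce to the case where $[s,t]$ lies in a single monotonicity interval of $F$ with $\lambda := F(t)/F(s)$ arbitrarily close to $1$, by splitting and summing over the finitely many intervals; in particular $|\lambda-1| \leq \|F'\|_\infty(t-s)/\min F$. After translating, I may assume both $K = \R^d\setminus U$ and $\Omega_0$ are star-shaped about the origin, and then so is $\Omega(u(s))$ for every $s$ by \lref{star-shapedness}. Letting $\tilde u(s)$ be the extension of $u(s)$ by $F(s)$ on $K$ (continuous by the Dirichlet condition), I introduce
\[
v(x) := \lambda\, \tilde u(s,x/\lambda).
\]
The two factors of $\lambda$ cancel in the gradient, so $|\nabla v|(x) = |\nabla \tilde u(s)|(x/\lambda) \in [\sqrt{1-\mu_-}, \sqrt{1+\mu_+}]$ on $\partial\{v>0\}$, and $v$ is harmonic where positive, away from a possible superharmonic kink across $\partial(\lambda K)\cap U$ in the increasing case.

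The next step is to verify that $v$ is a valid supersolution above $u(s)$ (resp.\ subsolution below $u(s)$) with appropriate Dirichlet data, and then invoke \dref{viscosity_solution}. In the increasing case $\lambda > 1$, star-shapedness of $K$ gives $x/\lambda \in K$ for $x \in \partial U$, so $v = \lambda F(s) = F(t)$ there; star-shapedness of $\Omega(\tilde u(s))$ gives $\Omega(u(s)) \subset \lambda\Omega(\tilde u(s)) = \{v>0\}$, and a maximum principle on $\Omega(u(s))\cap U$ yields $v \geq u(s)$. The minimality clause of \dref{viscosity_solution} then gives $u(t) \leq v$, and combined with $u(t) \geq u(s)$ by \rref{monotonicity}, we obtain $0 \leq u(t)-u(s) \leq v - u(s)$. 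In the decreasing case $\lambda < 1$, star-shapedness of $K$ forces $x/\lambda \in U$ for every $x \in U$, so there is no kink; analogous arguments produce $v \leq u(s)$ in $U$ and $v \leq F(t)$ on $\partial U$, and the maximality clause gives $u(t) \geq v$, so $0 \leq u(s)-u(t) \leq u(s)-v$.

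For the quantitative estimate, using the uniform spatial Lipschitz constant $L$ from \lref{space-regularity} and $|x| \leq R$ on the relevant sets (up to a factor of $\lambda$ close to $1$), I bound
\[
|v(x) - u(s,x)| \leq |\lambda-1|\,F(s) + L\,|x|\,|1-1/\lambda| \leq |\lambda-1|\,(F(s) + LR) \leq \frac{\|F'\|_\infty(t-s)}{\min F}(F(s) + LR),
\]
which yields the conclusion with $C = C(L, R, \min F)$ after absorbing $F(s)$ into $LR$. The main delicate point is checking in the increasing case that $v$ is genuinely a viscosity supersolution across $\partial(\lambda K)\cap U$: inside $\lambda K$, $v$ equals the constant plateau $F(t)$, while outside $v$ is a strictly smaller scaled harmonic function, so the outward normal derivative of $v$ jumps from $0$ (inside) to a negative value (outside), producing a negative distributional Laplacian term that preserves superharmonicity and hence the supersolution condition.
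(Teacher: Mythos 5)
Your proof takes the same dilation-comparison strategy as the paper (your $\lambda$ is the paper's $\lambda^{-1}$, but the comparison function $v$ is identical), and your superharmonicity check across $\partial(\lambda K)\cap U$ is a correct way to fill in the detail the paper handles with the terse remark ``$v$ is continuous and hence a supersolution.'' The one step that needs patching is your claim that $|x|\leq R$ ``on the relevant sets'': the support $\Omega(u(t))$ grows with $F$ and is generally not contained in $B_R$, so the Lipschitz term $L\,|x|\,|1-1/\lambda|$ is not bounded by $LR|1-1/\lambda|$ as written. The paper supplies this ingredient as a preliminary Step~1, comparing with the supersolution $\bigl(F(t)+(1+\mu_+)(R-|x|)\bigr)_+$ to obtain $\Omega(u(t))\subset B_{F(t)/(1+\mu_+)+R}$; with that radius bound in place your estimate closes, and the extra $F(s)$ in your final display genuinely cancels against the $F(s)$ in the denominator of $|\lambda-1| = |F(t)-F(s)|/F(s)$ rather than needing to be ``absorbed into $LR$,'' yielding the stated constant $C(L,R,\min F)$.
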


\begin{proof}
\textit{Step 1.} By the minimality of $u(t)$, including $u(0)$ by \eqref{e.stability-condition-intro},
\begin{align*}
u(t, x) \leq \Big(F(t) + (1 + \mu_+) (R - |x|)\Big)_+, \qquad t \in [0, T], x \in \overline U,
\end{align*}
since the right-hand side is a supersolution.
Therefore
\begin{align}
\label{Omegat-radius}
x \in \Omega(t) \quad \Rightarrow \quad |x| \leq \frac{F(t)}{1 + \mu_+} + R.
\end{align}

\textit{Step 2.}
Fix $(s, t) \subset \R \setminus Z$ with $F(s) < F(t)$. Let $\lambda := F(s)/F(t) < 1$. Define
\begin{align*}
v(x) := 
\begin{cases}
\lambda^{-1} u(s, \lambda x),  &x \in \lambda^{-1} \overline U,\\
F(t), &\text{otherwise}.
\end{cases}
\end{align*}
By the choice of $\lambda$, $v$ is continuous and hence a supersolution on $U$ with boundary data $F(t)$.
By minimality, $u(t) \leq v$.

Fix $\lambda x \in \Omega(s) \cap U$.
\eqref{Omegat-radius} implies $|x| \leq \lambda^{-1} (\frac{F(s)}{1 + \mu_+} + R)$.
Recall also that $u(s, \lambda x) \leq F(s)$. 
We have
\begin{align*}
v(x) &= \lambda^{-1} u(s, \lambda x) = u(s, \lambda x) + (\lambda^{-1} - 1) u(s, \lambda x)\\
&\leq u(s, x) + \|\nabla u(s)\|_\infty (1 -\lambda) |x| + (\lambda^{-1} - 1)F(s)\\
&\leq u(s, x) +  (\lambda^{-1} - 1) \left(L (\frac{F(s)}{1 + \mu_+} + R) + F(s)\right).
\end{align*}
Since $u_{t} \leq v$ we conclude that
\begin{align*}
0 \leq u(t, x) - u(s, x) \leq \left(\frac{L}{1+ \mu_+} + 1 + \frac{R}{F(s)}\right) (F(t) - F(s)).
\end{align*}

For $x \in U \setminus \lambda^{-1} U$ we note that $\dist(x, \partial U) \leq (\lambda^{-1} - 1) R = \frac{R}{F(s)} (F(t) - F(s))$ and therefore
\begin{align*}
0 \leq u(t, x) - u(s, x) &\leq F(t) - F(s) + \|\nabla u(s)\|_\infty \dist(x, \partial U)\\
&\leq \left(1 + \frac{LR}{F(s)}\right) (F(t) - F(s)).
\end{align*}

\textit{Step 3.}
If $(s, t) \subset \R \setminus Z$ with $F(s) > F(t)$, we have $\lambda = F(s) / F(t) > 1$ and $v \leq F(t)$. Hence $v$ is a subsolution and therefore by maximality $u(t) \geq v$. We only need to consider $x \in \Omega(s) \cap U$. This time
\begin{align*}
v(x) &= \lambda^{-1} u(s, \lambda x) = u(s, \lambda x) + (\lambda^{-1} - 1) u(s, \lambda x)\\
&\geq u(s,x) - \|\nabla u(s)\|_\infty (\lambda - 1) |x| - (1 - \lambda^{-1})F(s)\\
&\leq u(s,x) -  \left(L (\frac{F(s)}{1 + \mu_+} + R\right)\frac{F(s) - F(t)}{F(t)}  - F(t) + F(s).
\end{align*}
Hence by $u(t) \geq v$
\begin{align*}
0 \geq u(t,x) - u(s,x) \geq L \left(\frac{F(s)}{(1 + \mu_+)F(t)} + 1 + \frac{R}{F(t)}\right)(F(s) - F(t)).
\end{align*}

For general $(s, t)$, we recover the estimate \eqref{lip-in-time} by triangle inequality.
\end{proof}

Using the nondegeneracy of cone monotone solutions of the Bernoulli problem, \lref{lip-bdry-nondegen}, we can deduce Lipschitz regularity of $\Omega(u(t))$ in time.

\begin{corollary}
\label{c.omega-time-regularity}
Under the assumptions of \lref{time-regularity}, the sets $\Omega(u(t))$ are locally Lipschitz in time with respect to the Hausdorff distance.
\end{corollary}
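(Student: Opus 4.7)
The plan is to combine uniform nondegeneracy of $u(t)$ at $\partial\Omega(u(t))$ with the Lipschitz-in-time estimate of \lref{time-regularity}, and then promote the resulting pointwise bound on free-boundary points to a Hausdorff distance bound on the full positivity sets by means of star-shapedness.

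First I would invoke \lref{space-regularity}, which shows that the domains $\{u(t)>0\}$ are uniformly Lipschitz (indeed $C^{1,\beta}$) in $t\in[0,T]$, and then apply \lref{lip-bdry-nondegen} to obtain uniform nondegeneracy: there exist universal $c,r_0>0$ such that for every $t\in[0,T]$ and every $x_0\in \partial\Omega(u(t))\cap U$,
\[
\sup_{B_r(x_0)} u(t) \geq c r \quad \text{for all } r\leq r_0.
\]
This is the step where the strong star-shapedness truly matters, as nondegeneracy for general maximal subsolutions (which appear on the receding intervals) is not available without such a hypothesis.

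Second, I would fix $s,t\in[0,T]$ with $|t-s|$ small, pick $x_0\in\partial\Omega(u(t))\cap U$, and set $r:=K|t-s|$ with $K$ to be chosen. Nondegeneracy supplies $y\in B_r(x_0)$ with $u(t,y)\geq cr$, and \lref{time-regularity} gives
\[
u(s,y)\geq u(t,y) - C|t-s| \geq cr - C|t-s|.
\]
Choosing $K=2C/c$ yields $u(s,y)>0$, hence $y\in\Omega(u(s))$ and so $\dist(x_0,\overline{\Omega(u(s))})\leq r = C'|t-s|$. The symmetric argument with $s$ and $t$ swapped gives the analogous estimate for points in $\partial\Omega(u(s))\cap U$. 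Together these bound the Hausdorff distance between the free boundaries inside $U$ by $C|t-s|$.

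Third, to promote this boundary distance bound to a full Hausdorff bound on $\overline{\Omega(u(t))}$, I would use the uniform strong star-shapedness of \lref{star-shapedness}. Parametrizing each domain radially around a common star center, $\partial\Omega(u(t))=\{\rho_t(\theta)\theta:\theta\in S^{d-1}\}$, the functions $\rho_t$ are uniformly Lipschitz on $S^{d-1}$ and uniformly bounded above and below. A short computation (exploiting the Lipschitz regularity of $\rho_s$ to convert a nearby point $y$ in $\Omega(u(s))$ into a radial estimate in direction $\theta$) converts the previous step into the pointwise bound $|\rho_t(\theta)-\rho_s(\theta)|\leq C|t-s|$ for every $\theta$, from which $d_H(\overline{\Omega(u(t))},\overline{\Omega(u(s))})\leq C|t-s|$ follows by comparing on rays from the common center. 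The main subtlety to be careful about is crossing monotonicity changes of $F$, but since $Z$ is discrete by \eqref{F-hyp} only finitely many occur in any bounded interval, so a triangle inequality over monotonicity subintervals yields the claimed local Lipschitz dependence, with constants depending only on $F$, $\mu_\pm$, and the initial geometric data.
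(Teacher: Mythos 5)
Your proof is correct and rests on the same two ingredients as the paper's (uniform non-degeneracy at the free boundary plus the Lipschitz-in-time estimate of \lref{time-regularity}), but your organization of the comparison is genuinely different and in fact a bit cleaner. The paper picks the point $z\in\partial\Omega(u(t))$ maximizing $\dist(z,\partial\Omega(u(s)))$ and then case-splits on whether $z\in\Omega(u(s))$; the second case requires a ``growth from the interior'' estimate (non-degeneracy at a nearby free boundary point of $u(s)$ combined with the Harnack inequality). By instead applying non-degeneracy at time $t$, at the free boundary point $x_0\in\partial\Omega(u(t))$ itself, you bypass the case split and the Harnack step entirely: $y\in B_r(x_0)$ with $u(t,y)\geq cr$ and $u(s,y)\geq cr - C|t-s|>0$ immediately puts $x_0$ within $C|t-s|$ of $\overline{\Omega(u(s))}$. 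The trade-off is that you only get a boundary-to-set bound, so you need your Step 3 (radial parametrization and Lipschitz $\rho_t$) to convert to the full Hausdorff distance of the domains; the paper proves the stronger boundary-to-boundary estimate directly and leaves that conversion implicit. Both routes are sound.

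Two small points. First, you invoke \lref{space-regularity} for the uniform Lipschitz regularity of $\partial\Omega(u(t))$, but that lemma requires $\Omega_0\in C^{1,\alpha}$, which is not among the stated hypotheses of \lref{time-regularity}; what you actually need is only the uniform Lipschitz boundary, which follows directly from \lref{star-shapedness}, matching what the paper cites. Also note that in Step 2, the inequality that ``together these bound the Hausdorff distance between the free boundaries'' is slightly overstated — what you have is $\dist(\partial\Omega(u(t)),\overline{\Omega(u(s))})\leq C|t-s|$ and the symmetric statement, not $d_H(\partial\Omega(u(t)),\partial\Omega(u(s)))\leq C|t-s|$ — but your Step 3 supplies exactly what is needed, so the conclusion stands. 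Second, the worry about crossing monotonicity changes of $F$ is unnecessary: nothing in Steps 1 and 2 uses monotonicity, and the constants from non-degeneracy and \lref{time-regularity} are uniform on $[0,T]$, so the estimate holds for any $s,t$ with $|t-s|$ small, and for larger $|t-s|$ one simply iterates.
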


\begin{proof}
Let us fix $s \neq t$. Let $z \in \partial \Omega(u(t))$ be the point that maximizes the distance from $\partial \Omega(u(s))$. Let us denote $r := \dist(z, \partial \Omega(u(s)))$. 

Suppose that $z \notin \Omega(u(s))$. We have $B_r(z) \cap \Omega(u(s)) = \emptyset$ and hence $u_s = 0$ on $B_r(z)$. On the other hand, the free boundary is locally a Lipschitz graph and so the non-degeneracy (note that $u(t)$ satisfies \eqref{e.stability-condition-intro} by \lref{OvsMvs}) for the Bernoulli problem in this setting \lref{lip-bdry-nondegen} yields $\sup_{B_r(z)} u(t) \geq c_0 r$. By the Lipschitz continuity of $u$ in time, we deduce
\begin{align*}
c_0 r \leq\sup_{B_r(z)} u(t) \leq \tilde L |t - s|,
\end{align*}
where $\tilde L$ is the Lipschitz constant in \eqref{lip-in-time}. In particular, $r \leq \frac{\tilde L}{c_0} |t - s|$.

If $z \in \Omega(u(s))$ then $B_r(z) \subset \Omega(u(s))$. Using the non-degeneracy, Harnack inequality and the Lipschitz regularity, we have
\begin{align*}
C_H c_0 \frac r2 \leq C_H \sup_{B_{r/2}(z)} u(s) \leq u(s,z) \leq \tilde L |t - s|,
\end{align*}
which yields $r \leq \frac{2 \tilde L}{C_H c_0} |t - s|$.
\end{proof}

Finally we mention that the time regularity and space regularity can be interpolated to get time regularity of $\grad u$.

\begin{corollary}\label{c.derivative-time-cont}
Under the assumptions of \lref{space-regularity} for any $0 < \beta < \min\{\alpha,\frac{1}{2}\}$
 \[u\in C^{0,1}_tC_x \cap L^\infty_t C^{1,\beta}_x \ \hbox{ so by interpolation } \ u \in C^{\frac{\beta}{1+\beta}}_tC_x^1.\]
In particular, if $\alpha \geq \frac{1}{2}$ then $\grad u \in C^{\frac{2}{3}-}_tC^0_x$.
\end{corollary}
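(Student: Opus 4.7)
The plan is a classical Gagliardo--Nirenberg-type interpolation between the uniform-in-time spatial estimate $u \in L^\infty_t C^{1,\beta}_x$ from Lemma \ref{l.space-regularity} and the uniform-in-space time Lipschitz bound $u \in C^{0,1}_t C_x$ from Lemma \ref{l.time-regularity}, carried out by a difference quotient argument. Concretely, fix $s, t \in [0,T]$ and a base point $x_0 \in \overline{\Omega(u(t))} \cap \overline{\Omega(u(s))}$. For $h$ small enough that $x_0 + h$ also lies in both closed positivity sets, the spatial $C^{1,\beta}$ bound produces, uniformly in $\tau$, the first-order Taylor expansion
\[
u(\tau, x_0 + h) - u(\tau, x_0) - \nabla u(\tau, x_0) \cdot h = R_\tau(h), \qquad |R_\tau(h)| \leq C |h|^{1+\beta}.
\]
Subtracting the expansions at $\tau = t$ and $\tau = s$ and bounding the two function-value differences by \eqref{lip-in-time} yields
\[
\bigl|(\nabla u(t, x_0) - \nabla u(s, x_0)) \cdot h\bigr| \leq 2 \tilde L \, |t-s| + 2C |h|^{1+\beta}.
\]

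To extract a norm bound, choose $h = r e$ with $e$ a unit vector parallel to $\nabla u(t, x_0) - \nabla u(s, x_0)$, divide by $r$, and optimize in $r$: the two terms on the right-hand side are balanced at $r \sim |t-s|^{1/(1+\beta)}$, giving
\[
|\nabla u(t, x_0) - \nabla u(s, x_0)| \leq C \, |t-s|^{\beta/(1+\beta)}.
\]
Combined with the uniform $C^\beta_x$ bound on $\nabla u(t)$ this establishes $u \in C^{\beta/(1+\beta)}_t C^1_x$, and the ``in particular'' clause is obtained by letting $\beta$ approach $\min\{\alpha,\frac{1}{2}\}$.

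The one delicate point, which I expect to be the main obstacle, is justifying the choice of direction $e$ near the free boundary: one needs $x_0 + re$ to stay in both closed positivity sets at the optimal scale. At interior points this is automatic once $r$ is smaller than the distance from $x_0$ to the two free boundaries, a condition compatible with the optimal $r$ for small $|t-s|$ thanks to the Hausdorff Lipschitz time continuity of $\Omega(u(\cdot))$ from Corollary \ref{c.omega-time-regularity}. At free boundary points, the uniform interior cone property inherited from strong star-shapedness (Lemma \ref{l.star-shapedness}) together with Corollary \ref{c.omega-time-regularity} furnishes a cone of directions of aperture bounded below that lies inside both $\Omega(u(t))$ and $\Omega(u(s))$. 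If $e$ does not point into this common cone, one runs the argument along a vector $e'$ in it, recovering the time-Hölder bound for the component of the gradient difference along $e'$; the complementary component is then controlled using the dynamic slope condition (which pins $|\nabla u(\tau, x_0)|^2$ on the free boundary) together with the spatial $C^\beta$ closeness of the inward normals to $\partial\Omega(u(s))$ and $\partial\Omega(u(t))$ obtained from the uniform $C^{1,\beta}$ regularity of the free boundary.
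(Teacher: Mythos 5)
Your core argument coincides with the paper's: Taylor-expand $u(\tau,\cdot)$ at $x_0$ with a $C|h|^{1+\beta}$ remainder from the uniform $C^{1,\beta}_x$ bound, subtract the $\tau=t$ and $\tau=s$ expansions, control the function-value differences by the time-Lipschitz bound of Lemma~\ref{l.time-regularity}, and optimize at $|h| \sim |t-s|^{1/(1+\beta)}$. The paper's terse difference-quotient computation is the same estimate written in one line.

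Where you diverge is in the handling of the direction $e$ near the free boundary, and the route you sketch there is more involved than necessary and probably does not close cleanly. The dynamic slope condition pins only $|\nabla u|$, not its direction, so it does not directly give control on the ``complementary component'' of $\nabla u(t,x_0) - \nabla u(s,x_0)$; and the $C^\beta$ closeness of the inward normals bounds an angle but not the tangential gradient difference in a form you can simply add to the $e'$-component bound. Also note your interior case is not quite ``automatic for small $|t-s|$'': $x_0$ can sit on (or arbitrarily close to) one free boundary and hence, by Corollary~\ref{c.omega-time-regularity}, within $O(|t-s|)$ of the other, which is far smaller than the optimal $r \sim |t-s|^{1/(1+\beta)}$, so the interior hypothesis is a genuine restriction on $x_0$. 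Two cleaner resolutions: (i) the common interior cone from strong star-shapedness has nonempty interior, so running your estimate along $d$ linearly independent unit vectors $e'$ in the cone already bounds the full vector $\nabla u(t,x_0) - \nabla u(s,x_0)$, with a constant depending only on the uniform cone aperture $c_0$; or (ii) even simpler, slide $x_0$ inward along a cone direction to a point $x_1$ at distance $\gtrsim r = |t-s|^{1/(1+\beta)}$ from both free boundaries (possible since the boundaries move only $O(|t-s|) \ll r$), apply the interior estimate at $x_1$, and transport back to $x_0$ using the uniform spatial $C^\beta$ bound on $\nabla u(t)$ and $\nabla u(s)$, which costs only an additional $O(r^\beta)=O(|t-s|^{\beta/(1+\beta)})$. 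Either way the dynamic slope condition is not needed.
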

\begin{proof}
  Since $\grad u(t)$ are in $C^\beta(\overline{\Omega(u(t))})$ uniformly in $t$ we can use the uniform convergence of the difference quotients and the Lipschitz continuity in time of $u(t)$ from \lref{time-regularity}
\begin{align*} 
(\grad u(t,x) - \grad u(s,x)) \cdot e &= \frac{1}{h}(u(t,x+he) - u(t,x)) + \frac{1}{h}(u(s,x+he) - u(s,x)) +O(h^\beta) \\
&= O(h^{-1}|t-s|)+O(h^\beta).
\end{align*}
By choosing $h = |t-s|^{\frac{1}{1+\beta}}$ to match the orders of the two terms, we recover the desired gradient continuity.
\end{proof}

\subsection{Comparison principle in star-shaped setting}
Now we state a comparison principle that closes our discussion of the characterization of obstacle solutions using the local stability and dynamic slope conditions. We will utilize this comparison principle later on in \sref{MM-visc-prop}, when we discuss a specific energy solution generated by minimizing movements. 

\begin{proposition}\label{p.MVS-OVS-comparison}
Suppose that $w$ and $u$  are respectively a viscosity subsolution and a LSC viscosity supersolution (or a supersolution and an USC subsolution) of \eqref{e.stability-condition-intro} and \eqref{e.dynamic-slope-condition-intro} in $[0,T]$ with strongly star-shaped initial data, with the ordering $w(0) \leq u(0)$ (or $w(0) \geq u(0)$). Further suppose that $w$  satisfies the following: 
\begin{itemize}
\item[(a)]$\{w(t)>0\}$ is bounded and strongly star-shaped at each time, its boundary is uniformly $C^{1,\alpha}$ in space and Lipschitz in time, and $w(t)$ is $C^{1}$ in $\overline{\Omega(w(t))}$, and $\grad w(t)$ is continuous in time on $\bigcup_{t\in[0,T]} \overline{\Omega(w(t))}$.
\item[(b)] There exists a finite partition $\{t_i\}_{i=0}^{i=n}$ of $[0,T]$ such that $w$ is monotone in $[t_i, t_{i+1}]$.
\end{itemize} 
Then $w\leq u$ (or $w \geq u$).
\end{proposition}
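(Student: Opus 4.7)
The strategy is a first-crossing-time comparison, where $w$ is perturbed into a family of strict subsolutions via spatial dilation exploiting the strong star-shapedness. By induction over the finite monotonicity partition $\{t_i\}$ in hypothesis (b) it suffices to propagate the ordering across a single monotonicity interval, so I focus on $w$ a subsolution, $u$ an LSC supersolution, $w$ increasing on $[0,T]$, and $w(0) \leq u(0)$; the other case is symmetric.

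Let $c$ lie in a common ball of star-shapedness centers of $\R^d \setminus U$ and $\{w(t)>0\}$, and for $\lambda \in (0,1)$ close to $1$ set
\[ w_\lambda(t,x) := \lambda\, w\bigl(t,\, c+\lambda^{-1}(x-c)\bigr). \]
The Bernoulli scaling preserves $|\nabla w_\lambda|$ and moves positivity sets by a $\lambda$-dilation, so $w_\lambda$ inherits every viscosity sub-inequality of $w$ for the local stability and dynamic slope conditions. The regularity in (a) transfers: $w_\lambda$ is uniformly $C^{1,\alpha}_x$ with $\nabla w_\lambda$ continuous in time, and its free boundary has a pointwise well-defined normal velocity. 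Strong star-shapedness yields $w_\lambda \leq w$ pointwise together with the strict separations $w_\lambda(t,\cdot) \leq \lambda F(t) < F(t) = u(t,\cdot)$ on $\partial U$ and $w_\lambda(0) < u(0)$ on the interior of $\{w(0)>0\}$.

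Define $t_0 := \sup\{t\in[0,T] : w_\lambda(s) \leq u(s) \text{ on } \overline U \text{ for all } s \in [0,t]\}$. If $t_0 = T$ then sending $\lambda \nearrow 1$ gives $w \leq u$; otherwise, by USC of $w_\lambda$ and LSC of $u$ there exists a contact point $x_0$ with $w_\lambda(t_0, x_0) = u_*(t_0,x_0)$, while the ordering strictly fails in every right-neighborhood of $t_0$. The strict boundary separation rules out $x_0 \in \partial U$. An interior contact in the common positivity set is excluded by the strong maximum principle applied to the nonnegative harmonic difference $u_*-w_\lambda$, propagating the identity up to $\partial U$ in contradiction with the boundary separation. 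Hence $x_0 \in \partial \Omega(w_\lambda(t_0)) \cap \partial \Omega(u_*(t_0)) \cap U$.

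The key remaining step, and the main obstacle of the proof, is the free-boundary contact. The monotone advance of $w_\lambda$ combined with its space-time regularity and the strict right-failure of the ordering provides a strict positive cone condition for $\Omega(w_\lambda)$ at $(t_0,x_0)$ in the sense of \dref{positive-velocity-cone}, whence the dynamic slope subsolution inequality for $w_\lambda$ forces $|\nabla w_\lambda(t_0,x_0)|^2 \geq 1+\mu_+$. The $C^1$ touching of $w_\lambda$ from below at a common zero yields, via a Hopf/boundary-point comparison, $|p|^2 \geq 1 + \mu_+$ for every $p \in D_- u_*(t_0, x_0)$. The delicate task is then to show that in the first-crossing configuration $\Omega(u_*)$ must be strictly receding at $x_0$: this is extracted from the strict interior and boundary separation $w_\lambda < u$ combined with the quantitative advancing geometry of $w_\lambda$, which ensures that if $u$ were pinned or advancing at $x_0$ then the strict ordering of positivity sets would persist in a full parabolic neighborhood of $(t_0, x_0)$, contradicting the definition of $t_0$. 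The supersolution dynamic slope inequality then gives $|p|^2 \leq 1 - \mu_-$ for some $p \in D_- u_*(t_0, x_0)$, incompatible with $|p|^2 \geq 1 + \mu_+$ since $\mu_\pm > 0$. Sending $\lambda \nearrow 1$ concludes $w \leq u$, and the induction closes the proof.
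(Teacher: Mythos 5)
The proof has a genuine gap at the crucial free-boundary contact step, and it misses the two devices the paper introduces to close exactly this gap.

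First, the perturbation. You dilate by $w_\lambda(t,x) = \lambda\,w(t, c+\lambda^{-1}(x-c))$ with $\lambda<1$, which shrinks the positivity set but \emph{preserves} $|\nabla w_\lambda| = |\nabla w|$. So $w_\lambda$ carries no quantitative margin over $w$ as a subsolution: the pinning interval $[1-\mu_-,1+\mu_+]$ is scale-invariant under this Bernoulli scaling. The paper instead sets $W(t,x) = \sup_{y\in\overline B_{\rho(t)}(x)} w(t,\lambda y)$ with $\lambda > 1$ and $\rho(t) = \rho_0 e^{-t}$. The compression $y\mapsto\lambda y$ produces $\nabla W(t_0,x_0) = \lambda\nabla w(t_0,\lambda y_0)$ at a contact point, so the touching plane $p = \lambda \nabla w(t_0,\lambda y_0) \in D_- u_*(t_0,x_0)$ gives $\lambda^2|\nabla w|^2(t_0,\lambda y_0)\leq 1+\mu_+$ (from the supersolution stability inequality of \rref{envelopes-stability-supersoln}); once one then shows $|\nabla w|^2(t_0,\lambda y_0)\geq 1+\mu_+$, the factor $\lambda^2>1$ delivers the contradiction. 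Your scaling has no such factor, so even if both cone conditions held you would be asked to derive $1+\mu_+\leq |p|^2\leq 1-\mu_-$ with \emph{equality} inside the touching relation, which requires more care than you give.

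Second, and more seriously, your claim that ``the strict right-failure of the ordering provides a strict positive cone condition for $\Omega(w_\lambda)$ at $(t_0,x_0)$'' is not true. Monotone increase of $w_\lambda$ only says its support is non-decreasing; at a pinned point it may be stationary, and the ordering may fail for $t>t_0$ at a different location entirely. Dually, the negative cone condition for $\Omega(u_*)$ at $(t_0,x_0)$ need not hold: $u_*$ may touch $w_\lambda$ at $x_0$ for the first time at $t_0$ while its support is pinned there for $t<t_0$. This is the zero-velocity crossing the paper explicitly flags. The shrinking radius $\rho(t)$ is introduced precisely to inject an artificial drift $\rho'(t)<0$ into the sup-convolution so that at a contact point \emph{some} cone condition becomes available. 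Concretely, the paper's dichotomy is: if the negative cone condition for $u$ holds at $(t_0,x_0)$, one contradicts $\lambda>1$ using the supersolution dynamic slope inequality and the subsolution local stability condition $|\nabla w|^2\geq 1-\mu_-$; if it fails, the exterior points $x_n\to x_0$ of $\Omega(u(t_n))$ transfer, via the sup-convolution and the drift, to a ball of zeros of $w(t_n,\lambda\cdot)$ around $y_0$, which furnishes nearby contact points $(\tau_n,y_n)$ where the \emph{positive} cone condition holds for $w$; the subsolution dynamic slope inequality gives $|\nabla w|^2(\tau_n,\lambda y_n)\geq 1+\mu_+$, and the gradient continuity from hypothesis (a) passes this to $(t_0,\lambda y_0)$, again contradicting $\lambda^2|\nabla w|^2\leq 1+\mu_+$. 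Your proposal has no analogue of this case analysis and no mechanism to rule out the pinned-contact configuration, so the argument does not close.
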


Let us mention that an obstacle solution with star-shaped initial data satisfies (a) in \pref{MVS-OVS-comparison} due to \lref{space-regularity} and \lref{time-regularity} and  (b) due to \rref{monotonicity}. It is also a viscosity solution of \eqref{e.stability-condition-intro} and \eqref{e.dynamic-slope-condition-intro} by \lref{OvsMvs}. The uniform non-degeneracy follows from the boundary regularity and \lref{lip-bdry-nondegen}(i).  Thus, from the comparison principle \pref{MVS-OVS-comparison}, we can derive the uniqueness:

\begin{corollary}\label{c.MvsOvs}
 Any viscosity solution of \eqref{e.stability-condition-intro} and \eqref{e.dynamic-slope-condition-intro} with a strongly star-shaped initial data $u(0)$ that is continuous at $t = 0$ in the sense that $u_*(0) = u(0) = u^*(0)$, and boundary data satisfying \eqref{F-hyp} is the unique star-shaped obstacle solution with the same initial data and boundary data.
 \end{corollary}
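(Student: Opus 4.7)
The plan is to reduce to the comparison principle \pref{MVS-OVS-comparison}. Let $v$ denote the obstacle solution with the given initial data $u(0)$ and Dirichlet forcing $F$; such a $v$ exists by \dref{viscosity_solution} (since $F$ has only finitely many monotonicity changes, $v$ can be constructed monotonicity-interval by monotonicity-interval via the Perron method for the Bernoulli obstacle problem). The goal is then to show that any other viscosity solution $u$ of \eqref{e.stability-condition-intro} and \eqref{e.dynamic-slope-condition-intro} with the same initial and boundary data must agree with $v$. I would apply \pref{MVS-OVS-comparison} twice, taking $w = v$ in both cases: once as a subsolution paired with the LSC supersolution $u_*$, and once as a supersolution paired with the USC subsolution $u^*$. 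Because $u$ is continuous at $t=0$ (by hypothesis $u_*(0) = u(0) = u^*(0)$) the initial orderings $v(0) \leq u_*(0)$ and $v(0) \geq u^*(0)$ both degenerate to equalities, so the proposition yields $v \leq u_*$ and $v \geq u^*$. Since $u_* \leq u \leq u^*$ pointwise, this forces $u^* = u = u_* = v$.

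The step to verify is that $v$ itself meets the hypotheses of \pref{MVS-OVS-comparison}, namely hypotheses (a) and (b) on the test function. For (b), \rref{monotonicity} shows that obstacle solutions inherit the monotonicity partition from $F$, giving a finite partition $\{t_i\}$ on which $v$ is monotone in time. For (a), the star-shaped hypothesis on $u(0)$ together with \lref{star-shapedness} gives strongly star-shaped $\{v(t)>0\}$ for all $t$, hence boundedness and Lipschitz spatial boundaries. The $C^{1,\beta}$ spatial regularity of $\partial \Omega(v(t))$ and of $v(t)$ up to the free boundary, uniformly in $t$, is exactly \lref{space-regularity}, the Lipschitz time regularity is \lref{time-regularity} (upgraded to Lipschitz Hausdorff motion of the support by \cref{omega-time-regularity}), and joint time continuity of $\nabla v$ on the closure of the wet set follows from \cref{derivative-time-cont}. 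Finally, $v$ is a viscosity solution of \eqref{e.stability-condition-intro}--\eqref{e.dynamic-slope-condition-intro} itself by \lref{OvsMvs}, the uniform non-degeneracy needed there being supplied by \lref{lip-bdry-nondegen} applied to the $C^{1,\beta}$ free boundary.

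The main subtlety I anticipate is not any estimate but rather the careful bookkeeping of semicontinuous envelopes at the initial time. Since the generic viscosity solution $u$ may be discontinuous in $t$, one must be sure that the pairings $(v, u_*)$ and $(v, u^*)$ really do satisfy the ordered initial data required in \pref{MVS-OVS-comparison}; this is precisely where the hypothesis $u_*(0) = u(0) = u^*(0)$ is used and why it cannot be dropped. Once the initial orderings are established, the regularity of $v$ plays the role of a barrier so that the comparison principle applies without requiring any regularity of the competitor $u$. With $v \leq u_* \leq u \leq u^* \leq v$ on all of $[0,T] \times \overline U$, we conclude that $u = v$ and in particular that $u$ is itself continuous and is the unique obstacle solution.
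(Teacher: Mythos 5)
Your proposal is correct and takes essentially the same approach as the paper: the paper likewise establishes the corollary by verifying that the obstacle solution satisfies hypotheses (a) and (b) of \pref{MVS-OVS-comparison} (via \lref{space-regularity}, \lref{time-regularity}, \rref{monotonicity}, and \lref{OvsMvs}) and then invoking the comparison principle twice against $u_*$ and $u^*$. The one point to flag is that the corollary implicitly inherits the standing $C^{1,\alpha}$ hypothesis on $\Omega_0$ from the surrounding section, which your appeal to \lref{space-regularity} tacitly uses; otherwise the bookkeeping of semicontinuous envelopes at $t=0$ and the two-sided application of comparison are exactly as in the paper.
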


Let us now turn to the proof of \pref{MVS-OVS-comparison}.
As is typical with viscosity solutions, one of the main difficulties in the comparison principle of weak solutions is the lack of regularity.  We follow a typical approach based on sup / inf convolutions in order to regularize at the point where the first touching occurs.  The novel difficulty has to do with the weak notion of positive velocity and the difficulties stemming from crossings which occur with zero velocity.  We can use sup / inf convolutions with decreasing radius to accelerate the free boundary velocity, but a delicate argument relying on the regularity of one of the solutions is still needed to actually guarantee that the first touching point has a positive velocity.

\begin{proof}[Proof of \pref{MVS-OVS-comparison}]
Let us assume that $w$ is a viscosity subsolution of \eqref{e.stability-condition-intro} and \eqref{e.dynamic-slope-condition-intro} with regularity properties in assumptions (a) and (b), and $u$ is a LSC supersolution. We will show that 
\[
w(t,\lambda \cdot) \leq u(t,\cdot) \hbox{ for } t>0 \hbox{ and } \lambda>1,
\]
which yields $w \leq u$ in the limit $\lambda \searrow 1$. The proof for the other half of the statement is parallel, arguing the opposite inequality with $\lambda < 1$.

Fix $\lambda > 1$. Define
\[
W(t,x) := \sup_{y \in \overline B_{\rho(t)}(x)} w(t,\lambda y).
\]
Here $\rho(t) = \rho_0 e^{-t}$, and
$\rho_0$ is chosen proportional to $(\lambda-1)$ so that $W(0) < u(0)$ in $\overline{\{W(0) > 0\}}$, which is possible by strong star-shapedness. The main motivation for the sup-convolution is the effective reduction of the normal velocity of $W$ by $\rho'(t) < 0$ compared to $w$. Hence if $W$ and $u$ touch each other, either the normal velocity of $u$ is negative or $w$ has positive normal velocity near the contact, and therefore the dynamic slope condition can be used in either situation to arrive at a contradiction. Recall that $W$ is subharmonic in its positive set.

We would like to show that $W(t) < u(t)$ in $\overline{\{W(t) > 0\}}$ for all $t \geq 0$. 
Define the first contact time
\begin{align*}
t_0 = \inf\big\{t > 0: W(t) \not< u(t) \text{ in $\overline{\{W(t) > 0\}}$}\big\}.
\end{align*}
Clearly $t_0 > 0$ by lower semi-continuity of $u$ in time.
If $t_0$ is finite, we want to get a contradiction. The only non-standard part here lies in the fact that the support of $u$ may jump in time, so let us discuss this point carefully. Since we assume that $W$ is star-shaped, $W$ and its support continuously decrease as $\lambda$ increases. The support of $\{W(t_0)>0\}$ indeed can be made arbitrarily close to $\overline B_{\rho(t_0)}(0)$, in particular a subset of the fixed boundary $U^\complement$ for $u$, if $\lambda$ is sufficiently large.  Thus if $\{W(\cdot,t_0)>0\} \not\subset\{u(\cdot, t_0)>0\}$ due to discontinuity in time in $u$ at $t_0$, we increase $\lambda$ in the definition of $W$ until $\{W(t_0)>0\}$ precisely touches $\{u(t_0)>0\}$ from inside, so that there exists a point $x_0 \in \partial\{W(t_0)>0\} \cap \partial\{u(t_0)>0\}$.  We point out that $\rho_0$ does not need to be changed even if we need to increase $\lambda$.  Note that $W\leq u$ for $t\leq t_0$ for this larger choice of $\lambda$ due to the definition of $t_0$ and $W(t_0) - u(t_0)$ being subharmonic in $\{W(t_0) > 0\}$. In particular $u(t_0) - W(t_0)$ has a minimum at $x_0$.

By the definition of $W$ there is a point $y_0 \in \partial \{w(t_0, \lambda \cdot) > 0\}$ with $y_0 \in \overline{B_{\rho(t_0)}}(x_0)$, and
\begin{align*}
W(t_0, x) \geq w(t_0, \lambda (x - x_0 + y_0)) =: \phi(x).
\end{align*}
As $u(t_0) - \phi$ has a minimum at $x_0$, we deduce that 
\begin{align}
\label{gradw-inD-}
\lambda \nabla w(t_0, \lambda y_0)  = \nabla \phi(x_0) \in D_- u(x_0)
\end{align}
and so
\begin{equation}\label{e.gradw-inD-2}
    \lambda^2 |\nabla w(t_0, \lambda y_0)|^2 \leq 1 + \mu_+.
\end{equation}

By our assumption (b), we know that $w$ is either monotone increasing or decreasing in a closed time interval $[t_{-1}, t_0]$ with $t_{-1}<t_0$. If $w$ is monotone decreasing in $[t_{-1},t_0]$, then $W$ has strictly decreasing support in $[t_{-1},t_0]$, namely for $c = -\rho'(t_0)>0$ we have
\begin{equation*}
B_{c(t_0-t)}(x_0) \subset\{W(t)>0\} \subset\{u(t)>0\} \hbox{ for } t_{-1}\leq t<t_0.
\end{equation*}
In particular the negative cone condition \eref{cone_negative} is satisfied for $u$ at $(t_0, x_0)$, and so as $u$ is a supersolution of \eqref{e.dynamic-slope-condition-intro} we conclude by \eqref{gradw-inD-} that $\lambda^2 |\nabla w|^2 (t_0, \lambda y_0) \leq 1 - \mu_-$. However, as $w$ is itself a subsolution of \eqref{e.stability-condition-intro}, we have $\lambda^2 |\nabla w|^2(t_0, \lambda y_0) \geq \lambda^2(1 - \mu_-)$, which is a contradiction.

Now suppose that $w$ is monotone increasing in $[t_{-1},t_0]$. It is possible that in this case $u$ still satisfies the negative cone condition \eref{cone_negative} for some $c<0$ and $r_0>0$, in which case one can still proceed as in the above case. So now suppose that \eref{cone_negative} is not true for $c := -\frac{1}{3}\rho'(t_0) > 0$ and any $r_0>0$. This means that along an increasing time sequence $t_n$ converging to  $t_0$, there is $x_n$ such that 
\[
x_n \in \{u(t_n)=0\}\cap B_{c(t_0-t_n)}(x_0).
\]

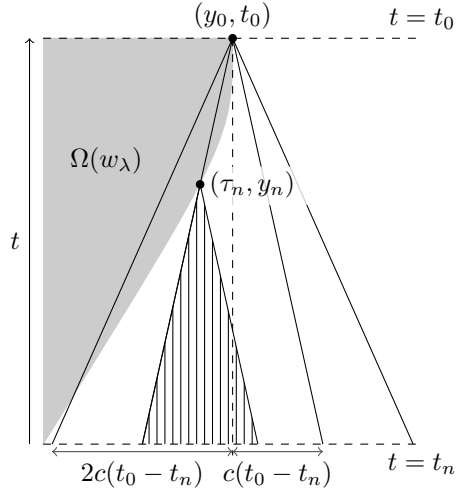
\begin{figure}
\begin{tikzpicture}[scale = 2]

\def\yscale{2.7}
\def\xscale{.6}

\filldraw[pattern = vertical lines] (-1*\xscale,-1*\yscale) -- (-.36*\xscale,-.36*\yscale) -- ({-.36*\xscale+\xscale/\yscale*(\yscale*1 - \yscale *.36)},-1*\yscale);

\filldraw[color = gray!40] (-2.1*\xscale,0) -- (0,0) .. controls (0,-.25*\yscale) .. (-2.1*\xscale,-1*\yscale);

\filldraw[black] (0,0) circle (.025);
\draw[black] (-1*\xscale,-1*\yscale) -- (0,0) -- (1*\xscale,-1*\yscale);
\draw[black] (-2*\xscale,-1*\yscale) -- (0,0) -- (2*\xscale,-1*\yscale);

\draw[black,dashed] (-2.1*\xscale,0) -- (2.1*\xscale,0);

\draw[->] (-2.25*\xscale,-1*\yscale) -- (-2.25*\xscale,0);

\filldraw (-.36*\xscale,-.36*\yscale) circle (.025);

\node[left] at (-2.25*\xscale,-.5*\yscale) {$t$};
\node at (-1.4*\xscale,-.3*\yscale) {$\Omega(w_\lambda)$};
\node[above] at (0,0) {$(y_0,t_0)$};

\draw[dashed] (0,0)--(0, -1*\yscale-0.05-0.025);

\draw[<->,black!70!white] (0,-1*\yscale-.05) -- (1*\xscale,-1*\yscale-.05);

\node[below] at (.5*\xscale,-1*\yscale - .05) {$c(t_0-t_n)$};


\draw[<->,black!70!white] (-2*\xscale,-1*\yscale-.05) -- (0,-1*\yscale-.05);

\node[below] at (-1*\xscale,-1*\yscale - .05) {$2c(t_0-t_n)$};

\draw[black,dashed] (-2.1*\xscale,-1*\yscale) -- (2.1*\xscale,-1*\yscale);
\node[below] at (2.1*\xscale,-1*\yscale) {$t=t_n$};
\node[above] at (2.1*\xscale,0) {$t=t_0$};

\filldraw[white,opacity = .8] ({(-.36+.1)*\xscale},{(-.36-.05)*\yscale}) rectangle ++(.7,.25);
\node[right] at (-.36*\xscale,-.36*\yscale) {$(\tau_n,y_n)$};

\end{tikzpicture}
\caption{Space-time picture displaying the definition of $(\tau_n,y_n)$.}
\label{f.comparison-pics}
\end{figure}

Now we consider what this means in terms of $w$. Recall $y_0$ above and consider $y$ such that $|y-y_0| \leq 2c(t_0-t_n)$. Then 
\begin{align*}
|y- x_n| &\leq |y-y_0| + |y_0 - x_0| + |x_0 - x_n| \\
&\leq  2c(t_0-t_n) + \rho(t_0) + c(t_0-t_n) \\
&= \rho(t_0) - \rho'(t_0) (t_0 - t_n)\\
&\leq \rho(t_n),
\end{align*}
where the last inequality is by convexity of $\rho(t)$. Thus, by definition of $W$, $w(t_n, \lambda y) \leq W(t_n, x_n) \leq u(t_n, x_n) = 0$. Namely $\overline B_{2c(t_0-t_n)} (y_0) \subset \{w(t_n, \lambda \cdot) = 0\}$.  


Now let 
\[
\tau_n:= \sup\{ s\in [t_n,t_0): \overline{B}_{c(t_0-s)}(y_0)\subset \{w(s, \lambda \cdot)=0\} \}.
\]
See \fref{comparison-pics}.  Since $\{w(t)=0\}$ evolves continuously in time by assumption (a),  we have $\tau_n >t_n$, and if $\tau_n<t_0$ then there is a contact point $y_n\in \partial\{w(\tau_n)>0\} \cap \partial B_{c(t_0-\tau_n)}(y_0)$. If the order is preserved up to $t=t_0$, i.e. $\tau_n = t_0$,  then we set $y_n=y_0$. From the definition of $(y_n, \tau_n)$ we see that positive cone condition \eqref{cone_positive} holds for $w(\cdot, \lambda \cdot)$ at $(\tau_n, y_n)$ and therefore for $w$ at $(t_n, \lambda y_n)$ with cone speed $\lambda c$. Since $w$ is a subsolution of \eqref{e.dynamic-slope-condition-intro} we have $|\nabla w|^2 (t_n, \lambda y_n) \geq 1 + \mu_+$.

Since $(\tau_n, y_n) \to (t_0, y_0)$, by the gradient continuity in assumption (a), we deduce
\[\lambda^2|\nabla w|^2(t_0, \lambda y_0) = \lim_{n\to\infty} \lambda^2|\nabla w|^2(\tau_n, \lambda y_n) \geq \lambda^2(1+\mu_+).
\] 
But this contradicts \eref{gradw-inD-2}.
Hence we conclude. 
\end{proof}

\section{Comparison properties of the minimizing movements scheme}\label{s.MM-visc-prop} 

In this section we discuss the comparison properties of the minimizing movement scheme \eqref{e.minimizing-movement-scheme-solution} and its $\delta \to 0$ limit.
In particular, we will show that the approximate solutions generated by this scheme will also satisfy the dynamic slope condition \eqref{e.dynamic-slope-condition-intro} in the viscosity sense.  Since the local stability condition \eqref{e.stability-condition-intro} is shown in \cite{FKPi}, in the star-shaped case we will be able to apply the comparison principle \pref{MVS-OVS-comparison} proved in \sref{comparison} to show convergence to the unique obstacle solution \OQE, see \tref{oqe-MM-relation}.  As a corollary we obtain that the obstacle solution \OQE{} is an energy solution.

 These results will complete the proof of our main theorem in the introduction, \tref{main-1}.
 
 \begin{remark}\label{remark:regularity}
The very recent result of Ferreri and Velichkov \cite{FerreriVelichkov} implies that $C^{1,\beta}$ regularity is propagated in the discrete scheme \eref{minimizing-movement-scheme}. However, at least with a naive application of their result, the constants would blow up as $\delta \to 0$ since the regularity theorem needs to be applied $O(\frac{1}{\delta})$ times in order to get to a nontrivial positive $t$ in the limit. In the star-shaped case we go around this difficulty due to the equivalence with the obstacle solution \OQE{} evolution. This equivalence allows us to apply the regularity theorem only finitely many times, at each monotonicity change, as in \lref{space-regularity}.
 \end{remark}

\begin{lemma}\label{l.minmov-MVS}
The approximate solution $u_\delta$ generated by the scheme \eqref{e.minimizing-movement-scheme} are viscosity solutions of the local stability \eqref{e.stability-condition-intro} and dynamic slope \eqref{e.dynamic-slope-condition-intro} conditions on $[0,T]$ in the sense of \dref{local-stability-visc} and \dref{dynamic-slope-condition-visc}.
\end{lemma}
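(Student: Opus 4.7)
The plan is to exploit the piecewise-constant-in-time structure of $u_\delta$ and the local Alt-Caffarelli minimizing property of each $u_\delta^k$. The local stability condition \eqref{e.stability-condition-intro} in the sense of \dref{local-stability-visc}\partref{stab-part-std} is already established in \cite{FKPi}; part \partref{local-stab-sub-part} on the upper envelope then follows from \rref{envelopes-stability}, once one observes that minimizers of the scheme \eqref{e.minimizing-movement-scheme} are uniformly non-degenerate at the free boundary (a classical feature of Alt-Caffarelli minimizers). The bulk of the argument will be the dynamic slope condition \eqref{e.dynamic-slope-condition-intro}.

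Since $u_\delta(t) = u_\delta^k$ for $t \in [k\delta, (k+1)\delta)$, the positivity set $\Omega(u_\delta^*(t))$ (resp. $\Omega((u_\delta)_*(t))$) is constant inside each such interval, so the positive and negative cone conditions at a free boundary point $(t_0, x_0)$ can only be non-trivial at jump times $t_0 = k\delta$ (otherwise the cone would force $x_0 \in \partial\Omega$ to lie outside or strictly inside its own positivity set). At a jump time the positive cone condition \eqref{cone_positive} propagates backwards into $((k-1)\delta, k\delta)$, where $u_\delta^* = u_\delta^{k-1,*}$, and forces $B_{cr_0}(x_0) \cap \overline{\Omega(u_\delta^{k-1})} = \emptyset$; symmetrically the negative cone condition at $(k\delta, x_0)$ forces $B_{cr_0}(x_0) \subset \Omega(u_\delta^{k-1})$.

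Next, for the advancing case in \dref{dynamic-slope-condition-visc}\partref{advancing-cond}, I would fix $r < cr_0$ so that $B_r(x_0) \cap \overline{\Omega(u_\delta^{k-1})} = \emptyset$. For any competitor $w$ agreeing with $u_\delta^k$ outside $B_r(x_0)$, the receding part of the dissipation $\mu_-|\Omega(u_\delta^{k-1}) \setminus \Omega(w)|$ and the exterior of the advancing part are both constants in $w$, so minimality of $u_\delta^k$ in \eqref{e.minimizing-movement-scheme} yields that $u_\delta^k$ is a local minimizer on $B_r(x_0)$ of the Alt-Caffarelli functional $w \mapsto \int_{B_r(x_0)} (|\nabla w|^2 + (1+\mu_+)\mathbf{1}_{\{w > 0\}})\,dx$. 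Classical viscosity free boundary theory for this functional then gives $|p|^2 \geq 1+\mu_+$ for every $p \in D_+u_\delta^{k,*}(x_0)$, with \lref{vs-two-notions-equiv} bridging the smooth test function and subdifferential formulations. Since $u_\delta^{k-1,*} \equiv 0$ in a neighborhood of $x_0$, we have $u_\delta^*(k\delta, \cdot) = u_\delta^{k,*}(\cdot)$ there, so $D_+u_\delta^*(k\delta, x_0) = D_+u_\delta^{k,*}(x_0)$, which is what we need. The receding case \partref{receding-cond} is symmetric: on $B_r(x_0) \subset \Omega(u_\delta^{k-1})$ the dissipation reduces to $-\mu_- |\Omega(w) \cap B_r(x_0)|$ up to an additive constant, so $u_\delta^k$ is a local Alt-Caffarelli minimizer with weight $1 - \mu_-$, yielding $|p|^2 \leq 1-\mu_-$ for $p \in D_-(u_\delta)_*(k\delta, x_0)$.

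The hard part will be the transition from the variational local minimality of $u_\delta^k$ on $B_r(x_0)$ to the pointwise viscosity slope bound at the exact point $x_0$. This is classical Alt-Caffarelli material but must be paired carefully with \lref{vs-two-notions-equiv} to handle the subdifferential (not just smooth test function) formulation of \dref{dynamic-slope-condition-visc}. A secondary bookkeeping step is to verify that the semicontinuous envelopes $u_\delta^*$ and $(u_\delta)_*$ collapse locally around $x_0$ to the corresponding envelope of $u_\delta^k$ alone; this follows directly from the distance information between $x_0$ and $\Omega(u_\delta^{k-1})$ already provided by the cone condition.
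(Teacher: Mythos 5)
Your proposal is correct and follows essentially the same route as the paper: reduce the cone conditions to the discrete jump times $t_0 = k\delta$, use the cone to place $x_0$ at positive distance from (resp.\ inside) $\Omega(u_\delta^{k-1})$, and then exploit minimality of $u_\delta^k$ in the scheme. The paper delegates the final step to the companion paper's Lemmas~3.2--3.3, whereas you unpack the local reduction to an Alt--Caffarelli minimizer with weight $1\pm\mu_\pm$ on $B_r(x_0)$ and invoke classical one-phase viscosity free boundary theory, which is exactly what those lemmas encode; you also make explicit the local collapse of the envelopes $u_\delta^*(k\delta,\cdot)$ and $(u_\delta)_*(k\delta,\cdot)$ to $u_\delta^k$ near $x_0$, a bookkeeping point the paper states more tersely.
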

\begin{proof}
The local stability condition \eqref{e.stability-condition-intro} in the sense of \dref{local-stability-visc} sense follows from \cite[Lemma~3.2
, Lemma~3.3
]{FKPi}. In particular, \dref{local-stability-visc}\partref{local-stab-sub-part} follows from \rref{envelopes-stability} and the uniform nondegeneracy of energy minimizers.

For \eqref{e.dynamic-slope-condition-intro}, we only check the subsolution condition since the supersolution case is symmetrical.

Suppose that $\Omega(u_\delta^{*}(t))$ has positive normal velocity $V_n(t_0,x_0)>0$ at some point $x_0 \in \partial \Omega(u_\delta^{*}(t_0))$ in the sense of \dref{positive-velocity-cone}, i.e.
\[\{x: \ |x-x_0| \leq c (t-t_0)\} \subset \Omega(u_\delta^{*}(t))^{\complement} \ \hbox{ for } \ t_0 - r_0 \leq t < t_0.\]
Since $u_\delta$ is constant on open intervals of the form $(k\delta,(k+1)\delta)$ this implies that $t_0 = k\delta$ for an integer $k$, that $u_\delta^*(t_0) = u^k_\delta$, and that $u^*_\delta(t) =  u^{k-1}_\delta$ for $t_0 - \delta \leq t < t_0$.  Since $u^k_\delta$ is a minimizer of
\[\mathcal{E}(u^{k-1}_\delta,v) = \mathcal{J}(v) + \Diss(u^{k-1}_\delta,v) \ \hbox{ over } \ v \in F(k\delta) + H^1_0(U)\]
and we have just established that $x_0 \in \R^d \setminus \overline{\Omega(u^{k-1}_\delta)}$ we can apply \cite[Lemma~3.2
, Lemma~3.3
]{FKPi} to conclude that
\[|\grad u(t_0,x_0)|^2 \geq 1+ \mu_+ 
\]
in the viscosity sense.
\end{proof}

\begin{remark}
The $u_\delta(t)$ are also viscosity solutions of \eqref{e.dynamic-slope-condition-intro} in the ``standard" sense of touching from above and below by $C^1$ test functions in space-time. This notion, unlike the notion in \dref{dynamic-slope-condition-visc} which defines positive velocity based on cones, behaves well with respect to the limit $\delta \to 0$.  One can apply the standard method of upper and lower half-relaxed limits, of Barles and Perthame \cite{BarlesPerthame}, to show that $\overline{u}^*$ and $\underline{u}_*$ are respectively sub and supersolutions of \eqref{e.dynamic-slope-condition-intro} in the standard viscosity sense.  

We should remark that the upper and lower-half relaxed limits $\overline{u}^*$ and $\underline{u}_*$ are not necessarily the same as the upper and lower semicontinuous envelopes $u^*(t)$ and $u_*(t)$ of the pointwise limit $u(t)$ of an energy solution constructed in 
\cite[Th.~1.3
]{FKPi}. It would be interesting to know whether the pointwise in time (subsequential) limits $u(t)$ are also viscosity solutions of \eqref{e.dynamic-slope-condition-intro} in general either in the standard sense or in the cone sense of \dref{dynamic-slope-condition-visc}. Below in \tref{oqe-MM-relation} we will show that it is so in the star-shaped case.

\end{remark}

Now we can apply the previous viscosity solution property in combination with the comparison theorem in \sref{comparison} to show that, in the star-shaped case, the limit of the minimizing movements scheme is the same as the unique obstacle solution.  The convergence is in fact quantitative.
\begin{theorem}\label{t.oqe-MM-relation}
Suppose that $\Omega_0$ is a $C^{1,\alpha}$ strongly star-shaped region, $K = \R^d \setminus U$ is compact and strongly star-shaped, $F$ is positive Lipschitz continuous, and $F$ changes monotonicity at most finitely many times on $[0,T]$.  Call $v$ the obstacle solution \OQE{} and $u_\delta$ to be a solution of the time incremental scheme \eref{minimizing-movement-scheme} on $[0,T] \times U$. Then for any $t \in [0, T]$
\[\|u_\delta(t) - v(t)\|_{L^\infty(U)} \leq C(\mu_\pm,d)(\exp(\|(\log F)'\|_\infty \delta) - 1)\|F\|_\infty \to 0 \ \hbox{ as } \ \delta \to 0.\] 
Thus the obstacle solution \OQE{} is the unique minimizing movements solution of the energetic evolution (\cite[Def.~1.2
]{FKPi}), in particular any minimizing movements solution is also a viscosity solution of \eqref{e.stability-condition-intro} and \eqref{e.dynamic-slope-condition-intro}.
\end{theorem}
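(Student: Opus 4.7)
The plan is to use the comparison principle \pref{MVS-OVS-comparison}, with the obstacle solution $v$ as the regular side and $u_\delta$ as the general side, to sandwich $u_\delta$ between two spatially dilated copies of $v$ whose Dirichlet data bracket the step-wise boundary data of $u_\delta$. The key inputs are: by \lref{minmov-MVS} each $u_\delta$ is simultaneously a viscosity sub- and supersolution of both \eqref{e.stability-condition-intro} and \eqref{e.dynamic-slope-condition-intro} in the sense of \dref{local-stability-visc} and \dref{dynamic-slope-condition-visc}, and by \tref{equivalence} together with \lref{space-regularity}, \cref{omega-time-regularity}, \cref{derivative-time-cont}, and \rref{monotonicity}, the obstacle solution $v$ is a viscosity solution of the same conditions and satisfies hypotheses (a) and (b) of \pref{MVS-OVS-comparison}.

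Next, I would set $\lambda = \lambda_\delta := \exp(\|(\log F)'\|_\infty\,\delta)$ so that $\lambda^{-1} F(t) \leq F(k\delta) \leq \lambda F(t)$ for $t \in [k\delta,(k+1)\delta)$, and define $v_\lambda^+(t,x) := \lambda v(t, x/\lambda)$ on $\lambda U \subset U$ and $v_\lambda^-(t,x) := \lambda^{-1} v(t, \lambda x)$ on $U \subset \lambda^{-1} U$. Since $\nabla(\lambda v(\cdot/\lambda)) = \nabla v(\cdot/\lambda)$, the free-boundary slope is preserved under the dilation, so $v_\lambda^\pm$ inherit both the local stability and the dynamic slope conditions from $v$ together with the regularity (a) of \pref{MVS-OVS-comparison}. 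The boundary values satisfy $v_\lambda^+|_{\partial(\lambda U)} = \lambda F(t) \geq F(k\delta)$ and $v_\lambda^-|_{\partial U} \leq \lambda^{-1} F(t) \leq F(k\delta)$, and strong star-shapedness of $v(0)$ with $\lambda > 1$ gives the strict initial orderings $v_\lambda^-(0) < u_\delta(0) < v_\lambda^+(0)$ in the interior of $\{v(0)>0\}$. Applying \pref{MVS-OVS-comparison} in both directions then yields $u_\delta \leq v_\lambda^+$ on $\lambda U$ and $v_\lambda^- \leq u_\delta$ on $U$, and on the thin annulus $U \setminus \lambda U$ the direct bound $u_\delta \leq F(k\delta) \leq \lambda F(t)$ from the maximum principle extends the upper sandwich to all of $U$.

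Finally, I would estimate the dilation error using $\|v\|_\infty \leq \|F\|_\infty$ (maximum principle), the uniform Lipschitz bound $\|\nabla v\|_\infty$ from \lref{space-regularity}, and the a priori diameter bound from \eqref{Omegat-radius}:
\[
|v_\lambda^+(t,x) - v(t,x)| \leq (\lambda-1)\, v(t,x/\lambda) + \|\nabla v\|_\infty\, (1-\lambda^{-1}) |x| \leq C(\mu_\pm,d)(\lambda-1)\|F\|_\infty,
\]
with an analogous bound for $v_\lambda^-$. Combining with the sandwich produces the claimed rate. The remaining assertions follow from \cite{FKPi}: subsequential limits of the discrete scheme are energy solutions, so since the full sequence $u_\delta$ converges uniformly to the unique obstacle solution $v$, $v$ must be the unique minimizing movements limit, and hence simultaneously an energy solution and a viscosity solution of the motion law.

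The main obstacle is verifying the regularity and viscosity super/sub-solution property of the dilated functions $v_\lambda^\pm$ and reconciling their natural dilated domains with the fixed $U$; the resulting annular discrepancy has width $O(\lambda - 1)$ and is controlled by the Lipschitz regularity of $v$ at the same order $O(\lambda - 1)\|F\|_\infty$, so no genuine obstruction arises.
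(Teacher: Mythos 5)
Your proposal is correct and follows essentially the same approach as the paper: apply the comparison principle \pref{MVS-OVS-comparison} between $u_\delta$ and outward/inward dilations of the obstacle solution $v$, with the dilation factor calibrated to $\exp(\|(\log F)'\|_\infty\,\delta)$ so that the dilated Dirichlet data bracket the step-wise forcing $F_\delta$, and then convert the dilation factor into an $L^\infty$ error via the Lipschitz bound on $v$. The paper writes this with $M(\delta) = \sup_t\sup_{h\in[0,\delta]} F(t-h)/F(t)$ (implicitly extending $v$ by $F(t)$ on $K$ so that $\varphi_\delta = M(\delta)v(t,M(\delta)^{-1}\cdot)$ is defined on all of $\overline U$) where your $v_\lambda^+$ is defined only on $\lambda U$ and you treat the thin annulus $U\setminus \lambda U$ separately; this is only a cosmetic bookkeeping difference, and your annular estimate is of the correct order $(\lambda-1)\|F\|_\infty$, so there is no genuine divergence in method or in the quality of the resulting bound.
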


\begin{proof}
We prove the convergence by comparison principle \pref{MVS-OVS-comparison} with $w = v$ the obstacle solution which, by \cref{derivative-time-cont}, is in $C^{0,1}_tL^\infty_x \cap L^\infty_tC^{1,\beta}_x$ on $[0,T] \times U$.

Let $M(\delta) = \sup_t\sup_{h \in [0,\delta]} F(t-h)/F(t)$, note that $M(\delta) \leq \exp(\|(\log F)'\|_\infty \delta)$.  Then
\[\varphi_\delta(t,x) := M(\delta)v(t,M(\delta)^{-1} x)\]
is a $C^{0,1}_tL^\infty_x \cap L^\infty_tC^{1,\beta}_x$ supersolution of \eqref{e.stability-condition-intro} and \eqref{e.dynamic-slope-condition-intro} on $[0,T]$ with star-shaped level sets and so the comparison principle \pref{MVS-OVS-comparison} implies that $u_\delta \leq \varphi_\delta$ on $[0,h] \times U$.  Note that we have chosen $M(\delta) \geq 1$ so that for $x \in K$ also $M(\delta)^{-1} x \in K$ since $K$ star-shaped and
\[\varphi_\delta(t,x) = M(\delta)v(t,M(\delta)^{-1} x) = M(\delta) F(t) \geq \sup_{h \in [0,\delta]} F(t-h) \geq F_\delta(t)\]
By uniform Lipschitz regularity of $v(t)$
\begin{align*}
u_\delta(t,x) &\leq \varphi_\delta(t,x) \\
&\leq M(\delta)v(t,x) +M(\delta)\|\grad v(t)\|_\infty(M(\delta)^{-1} -  1)\sup_{x \in \Omega(v(t))}|x|\\
&\leq v(t,x) + C(M(\delta)-1)\|F\|_\infty.
\end{align*}
Note that $\Omega(v(t)) \subset B_{CF(t)}(0)$ by uniform non-degeneracy of $v(t)$ and the upper bound $v(t) \leq F(t)$.  

This proves the upper bound. The lower bound is similar comparing with an inward dilation of size $m(\delta) = \inf_t\inf_{h \in [0,\delta]} F(t-h)/F(t)$ instead.

Furthermore, applying \cite[Th.~1.3
]{FKPi}, we also know that, along a subsequence, $u_\delta(t) \to u(t)$ pointwise in time and uniformly in space and $u$ is an energy solution.  By uniqueness of pointwise limits $u \equiv v$ and so the obstacle solution is an energy solution.
\end{proof}

\appendix

\section{Technical results}

\subsection{Lipschitz and non-degeneracy}
First we record the Lipschitz continuity of supersolutions of the one-phase problem. See \cite[Lemma 11.19]{CaffarelliSalsa} for a proof.

\begin{lemma}[Lipschitz estimate]\label{l.one-phase-lipschitz}
      There is $C(d) \geq 1$ so that for any viscosity solution $u$  of
\[\Delta u = 0 \ \hbox{ in } \ \Omega(u) \cap B_2 \ \hbox{ and } \ |\grad u|^2 \leq {Q} \ \hbox{ on } \ \partial \Omega(u) \cap B_2\]
 we have
\[\|\grad u\|_{L^\infty(B_1)} \leq C(\sqrt{Q}+\|\grad u\|_{L^2(B_2)}).\]
\end{lemma}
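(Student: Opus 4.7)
The plan is to combine a linear growth bound near the free boundary, coming from the supersolution condition $|p|^2\le Q$ on $D_-u$, with classical interior gradient estimates for the harmonic function $u$ on its positivity set. Scaling to $Q=1$ does not simplify anything here since we want both the $\sqrt Q$ and the $L^2$ contribution, so I would keep the problem as stated.

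Step 1 (linear growth from the free boundary). I would first show that there exist universal constants $C$ and $r_0>0$ such that for every $y_0\in\partial\Omega(u)\cap B_{3/2}$ and every $r\le r_0$,
\[
\sup_{B_r(y_0)\cap\Omega(u)} u \;\le\; C\sqrt Q\, r.
\]
The idea is the standard one: assuming this fails at scale $r$ with a large constant $A$, one uses interior Harnack inside $\Omega(u)$ and the mean value property for the harmonic function $u$ to iterate along a Harnack chain, producing a large ball $B_\rho(z)\subset\Omega(u)$ tangent to $\partial\Omega(u)$ at some $y_1$ close to $y_0$ with $u(z)\ge c A\sqrt Q\rho$. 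By the boundary Hopf/Harnack estimate in a tangent ball, the linear function $\ell(x)=c A\sqrt Q\,\nu\cdot(x-y_1)$ (with $\nu$ the inward normal at $y_1$) touches $u$ from below at $y_1$ in the sense of \dref{supersubdifferential}, so $\ell'\in D_-u(y_1)$. Taking $A$ large enough makes $|\ell'|^2>Q$, contradicting the supersolution condition $|p|^2\le Q$ for $p\in D_-u(y_1)$.

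Step 2 (interior Lipschitz bound). For $x_0\in B_1$ set $d=\dist(x_0,\partial\Omega(u))$. If $d\le\tfrac14$, pick a closest point $y_0\in\partial\Omega(u)\cap B_{5/4}$; since $u$ is harmonic in $B_d(x_0)\subset\Omega(u)$, the interior gradient estimate gives
\[
|\nabla u(x_0)|\;\le\;\frac{C}{d}\sup_{B_{d/2}(x_0)} u\;\le\;\frac{C}{d}\sup_{B_{2d}(y_0)} u\;\le\;C\sqrt Q,
\]
by Step 1. If $d>\tfrac14$ then $B_{1/4}(x_0)\subset\Omega(u)\cap B_2$ and $u$ is harmonic there, so the $L^2$-based interior estimate gives
\[
|\nabla u(x_0)|\;\le\;C\|u\|_{L^2(B_2)}\;\le\;C\|\nabla u\|_{L^2(B_2)},
\]
where the last inequality is Poincaré's inequality on $B_2$ applied to $u$, which vanishes on the positive-measure set $B_2\setminus\Omega(u)$ guaranteed by the hypothesis $\partial\Omega(u)\cap B_2\ne\emptyset$ (if this is empty, $u$ is harmonic throughout $B_2$ and the $L^2$ estimate alone closes the argument). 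Taking the supremum over $x_0\in B_1$ yields the claimed bound.

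Main obstacle. The delicate point is Step 1 — specifically, ensuring that the Harnack chaining produces a genuine lower test plane at a free boundary point so that the viscosity condition $|p|^2\le Q$ on $D_-u$ can be applied. In other regularity treatments one often circumvents this by using boundary Harnack for Lipschitz free boundaries or the ACF monotonicity formula, but here the viscosity supersolution hypothesis is weak enough that the cleanest route is to extract the tangent ball geometry from the growth contradiction and then invoke the Hopf-type lower bound to produce the linear touching profile. The rest of the argument is a routine harmonic estimate and Poincaré inequality.
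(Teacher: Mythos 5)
Your overall approach — a linear growth estimate at the free boundary obtained by a Hopf-barrier contradiction, followed by interior harmonic estimates — is the standard proof and matches the reference \cite[Lemma 11.19]{CaffarelliSalsa} that the paper cites. Step 1 is essentially correct; if anything, the Harnack chain is unnecessary, since a single tangent ball at the point $x_1$ where $u(x_1) > A\sqrt{Q}\,r$ already produces the contradiction: with $\rho=\dist(x_1,\partial\Omega(u))\le r$ the ball $B_\rho(x_1)\subset\Omega(u)$ touches the free boundary, and the Hopf slope there is at least $c\,u(x_1)/\rho \ge cA\sqrt{Q}$.

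Step 2, however, has a genuine gap in the final inequality $\|u\|_{L^2(B_2)}\le C\|\nabla u\|_{L^2(B_2)}$. You justify it by Poincar\'e, claiming that $u$ vanishes on a set of positive measure because $\partial\Omega(u)\cap B_2\ne\emptyset$. This does not follow. Take $u(x)=\sqrt{Q}\,|x_d|$: it is harmonic in $\Omega(u)=\{x_d\ne 0\}$, and at any point of $\{x_d=0\}$ one checks directly that $D_-u$ is contained in $\{p_d e_d: |p_d|\le\sqrt{Q}\}$, so $u$ is a viscosity supersolution in the sense of \dref{subdifferentials-vs-def} with nonempty free boundary in $B_2$, yet $\{u=0\}$ has Lebesgue measure zero and Poincar\'e gives nothing. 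Moreover, even when the zero set has positive measure, the Poincar\'e constant depends on that measure and is not controlled by $d$ alone, so the resulting constant would fail to be universal as the lemma requires. The fix is short and avoids $\|u\|_{L^2}$ entirely: since $u$ is harmonic in $B_{1/4}(x_0)$, each component of $\nabla u$ is harmonic there, so the mean value property together with Cauchy--Schwarz gives $|\nabla u(x_0)|\le C(d)\|\nabla u\|_{L^2(B_{1/8}(x_0))}\le C(d)\|\nabla u\|_{L^2(B_2)}$. This bound also subsumes the separate case $\partial\Omega(u)\cap B_2=\emptyset$ that you handle at the end. With that replacement, the rest of the argument closes.
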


Non-degeneracy of Bernoulli problem solutions is much more delicate than Lipschitz continuity, it typically needs a stronger condition than just the viscosity solution property.  Here we give several cases where it is known.
\begin{lemma}\label{l.lip-bdry-nondegen}
Let $u \in C(B_2)$ satisfy one of the following: 
\begin{enumerate}[label = (\roman*)]
\item\label{part.lip-bdry-case} $\partial\{u>0\}$ is an $L$-Lipschitz graph in $B_2$ and $u$ solves
\[ \Delta u = 0 \ \hbox{ in } \ \{u>0\} \cap B_2, \ \hbox{ and } \  |\grad u| \geq 1 \hbox{ on } \ \partial \{u>0\} \cap B_2.\] 
\item\label{part.inward-min-case}  $u$ is an inward minimizer of $\mathcal{J}$ in $B_2$. 
\item\label{part.minimal-super-case} Suppose that $u$ is a minimal supersolution in $B_2$
\[ \Delta u = 0 \ \hbox{ in } \ \{u>0\} \cap B_2, \ \hbox{ and } \  |\grad u| \leq 1 \hbox{ on } \ \partial \{u>0\} \cap B_2.\] 
\item\label{part.maximal-sub-d2}  $d=2$ and $u$ is a maximal subsolution in $B_2$ of
\[ \Delta u = 0 \ \hbox{ in } \ \{u>0\} \cap B_2, \ \hbox{ and } \  |\grad u| \geq 1 \hbox{ on } \ \partial \{u>0\} \cap B_2.\] 
 \end{enumerate}
 Then there is $c_0$ depending on $d$ (and on $L$ if in the first case) such that  
 \[
 \sup_{B_r(x)} u(x) \geq c_0 r \quad \hbox{ for all } x\in \partial \{u>0\}\cap B_1, \ B_r(x) \subset B_2. 
\]
\end{lemma}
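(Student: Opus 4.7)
The four cases require substantively different arguments; my plan is to handle them separately, in each case either reducing to a standard non-degeneracy argument or deriving a contradiction from the defining extremal property.

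For \emph{case (ii)}, the inward minimizer setting, I would invoke the classical Alt--Caffarelli non-degeneracy argument: assuming $\sup_{B_r(x)} u < \varepsilon r$ for a small universal $\varepsilon$, replace $u$ inside $B_r(x)$ by a suitable downward truncation of its harmonic extension. The Dirichlet-energy cost is controlled by $\varepsilon^2 r^{d-2} \cdot r^2$ while the gain in the $\one_{\{u>0\}}$ term is of order $r^d$ once $\varepsilon$ is small, yielding a strictly smaller energy competitor lying below $u$ and contradicting inward minimality. The complete execution is carried out in \cite[Chapter 11]{CaffarelliSalsa}.

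For \emph{case (iii)}, I would contradict minimality by an explicit smaller supersolution. Suppose $\sup_{B_r(x)} u < \varepsilon r$ and define $v = u$ outside $B_r(x)$, $v \equiv 0$ on $\overline{B_{r/2}(x)}$, and $v$ the harmonic extension in the annulus $B_r(x) \setminus \overline{B_{r/2}(x)}$ with these boundary values. By comparison $v \leq u$; superharmonicity holds in each piece and across $\partial B_r(x)$ (where $v = u$ while $v$ is harmonic on each side). The slope condition on the new free boundary $\partial B_{r/2}(x)$ follows from a radial comparison: the annular Dirichlet problem with boundary data bounded by $\varepsilon r$ gives $|\grad v| \leq C\varepsilon$ on $\partial B_{r/2}(x)$, hence $|\grad v| \leq 1$ for $\varepsilon$ small. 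Because $x \in \partial \{u > 0\}$ ensures $u$ is positive at some point of $B_{r/2}(x)$, we have $v < u$ strictly there, violating minimality.

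For \emph{case (i)}, the Lipschitz graph setting, I would use the interior cone condition provided by the graph representation. At each $x_0 \in \partial\{u>0\} \cap B_1$ there is an open truncated cone $\Gamma(x_0) \subset \{u>0\}$ with aperture depending only on $L$. Following the blow-up strategy of \lref{contact-blowup}, the uniform Lipschitz bound from \lref{one-phase-lipschitz} gives compactness of $u_s(y) := u(x_0 + sy)/s$, and any subsequential limit $u_0$ is harmonic in a half-space of the form $\{y \cdot e > 0\}$ (the cone limit forces the boundary of the positivity set to contain that half-space in the limit) and vanishes on the boundary. Hence $u_0(y) = A(y\cdot e)_+$, and the viscosity subsolution condition $|\grad u|\geq 1$ passes to the limit to give $A\geq 1$. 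Locally uniform convergence then yields $\sup_{B_r(x_0)}u \geq \tfrac12 r$ for all $r$ below a threshold which is universal in $x_0$ by a standard covering/compactness argument along $\partial\{u>0\}\cap B_1$.

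For \emph{case (iv)}, the two-dimensional maximal subsolution setting, I would exploit the unbounded growth of the logarithmic kernel. Suppose $\sup_{B_r(x)} u < \varepsilon r$ and pick $R \in (r, 1)$ with $\log(R/r)$ large depending on $\varepsilon$. The function $\varphi(y) := r \log(R/|y-x|)$ is harmonic in the annulus $B_R(x) \setminus B_r(x)$, vanishes on $\partial B_R(x)$, and satisfies $|\grad \varphi| = 1$ on $\partial B_r(x)$, with $\varphi(y) \gg \varepsilon r$ in a large portion of the annulus once $\log(R/r) \gg \varepsilon^{-1}$. I would then define a candidate $v = \max(u, \widetilde \varphi)$ where $\widetilde\varphi$ extends $\varphi$ by the constant $\varphi|_{\partial B_r(x)}$ inside $B_r(x)$ and by $0$ outside $B_R(x)$; across $\partial B_R(x)$ the gradient jumps in the correct direction for subharmonicity of $v$, and the free boundary of $v$ lies on $\partial B_R(x) \cap \{u = 0\}$ where the slope condition $|\grad v| = 1$ holds by construction. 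The inequality $v \geq u$ with strict inequality on an open subset of the annulus contradicts maximality of $u$. The hard part, and the main obstacle in the whole lemma, is precisely this case: the construction must simultaneously produce sub\-harmonicity across two matching interfaces and the slope condition on a new free boundary, and the argument genuinely fails in $d \geq 3$ because $|y|^{2-d}$ is bounded at infinity, so no barrier of comparable scale-invariant growth is available.
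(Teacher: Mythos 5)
The paper does not reprove any of these facts: the entire proof of the lemma is four citations, to \cite[Lemma 6.1]{DeSilva} for case (i), \cite[Lemma 4.4]{VelichkovBook} for case (ii), \cite[Lemma 6.9]{CaffarelliSalsa} for case (iii), and \cite{Betul} for case (iv). Your attempt to supply actual proofs is therefore a genuinely different route, and it is worth knowing which parts of it hold up. Your case (ii) is essentially the standard Alt--Caffarelli cut-down argument and is fine. The others have gaps.

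For case (i), the blow-up argument is circular exactly at the point where the lemma has content: if the blow-up limit $u_0$ were identically zero (which is precisely the degenerate scenario you need to exclude), then $\{u_0>0\}$ is empty, there is no free boundary point at which to test $|\nabla u|\ge 1$, and you cannot conclude $A\ge 1$. The Lipschitz-graph hypothesis gives you that $\{u_0>0\}\subset\{y\cdot e>0\}$, not the reverse inclusion, and nothing in the compactness step forces the limit to be nontrivial. The argument in \cite{DeSilva} is a direct barrier construction exploiting the exterior cone, not a blow-up. For case (iii), the step ``by comparison $v\le u$'' is not justified and is in general false: $v$ solves the Dirichlet problem in the annulus with data $0$ on $\partial B_{r/2}$ and $u$ on $\partial B_r$, so $v>0$ throughout the open annulus as soon as $u>0$ somewhere on $\partial B_r$, whereas $u$ may vanish on parts of the annulus; then $v>u$ there. (The underlying difficulty is that a nonnegative function harmonic in its positivity set is \emph{sub}harmonic across the free boundary, not superharmonic, so the comparison goes the wrong way.) The standard competitor uses a minimum $\min(u,\psi)$ rather than a replacement, precisely to avoid this. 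For case (iv), the construction has two concrete errors: with $\varphi(y)=r\log(R/|y-x|)$ one has $|\nabla\varphi|=r/|y-x|$, so $|\nabla\varphi|=1$ on $\partial B_r(x)$ and $|\nabla\varphi|=r/R\ll 1$ on $\partial B_R(x)$ --- the slope condition on the new free boundary $\partial B_R\cap\{u=0\}$ therefore fails, not holds. Moreover, $\widetilde\varphi$ has a concave (superharmonic) kink across $\partial B_r$ (normal derivative jumps from $0$ inside to $-1$ outside), so $\widetilde\varphi$ is not subharmonic there and $\max(u,\widetilde\varphi)$ need not be a subsolution. You correctly identify case (iv) as the hard one --- this is why the paper delegates it to a dedicated two-dimensional result \cite{Betul} --- but the barrier as written does not close the argument.
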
 
\begin{proof}
For part \partref{lip-bdry-case} see \cite[Lemma 6.1]{DeSilva}, for part \partref{inward-min-case} see \cite[Lemma 4.4]{VelichkovBook}, for part \partref{minimal-super-case} see \cite[Lemma 6.9]{CaffarelliSalsa}, and for \partref{maximal-sub-d2} see \cite{Betul}.
\end{proof}

\subsection{Discussion of viscosity solution notions}
\label{s.app-vs-equivalence}

In \sref{comparison} we defined the spatial slope conditions in terms of the sub and superdifferential notions in \dref{subdifferentials-vs-def}. These appear a bit different from the classical sub and supersolution notions:

\begin{definition}\label{d.test-fcns-vs-def}
Suppose $u: U \to [0,\infty)$ is continuous and is subharmonic (resp. superharmonic) in $\Omega(u) \cap U$.  Let $G \subset \partial \Omega(u) \cap U$ be a relatively open subset. Then $u$ is a subsolution (resp. supersolution) of
\[|\grad u|^2 \geq {Q} \quad \hbox{(resp. $|\grad u|^2 \leq {Q}$)}\quad \hbox{ on } G\]
if, whenever $\varphi$ is a smooth test function such that $\varphi_+$ touches $u$ from above (resp. $\varphi$ touches $u$ from below) at $x \in G$ with $\Delta \varphi(x) < 0$ (resp. $\Delta \varphi(x) > 0$),
\[|\grad \varphi(x)|^2 \geq {Q} \qquad \hbox{(resp. $|\grad \varphi(x)|^2 \leq {Q}$)}.\]
\end{definition}

There is an equivalence between these notions.

\begin{lemma}\label{l.vs-two-notions-equiv}
Suppose that $u: U \to [0,\infty)$ is continuous and subharmonic (resp. superharmonic) in $\Omega(u) \cap U$ and let $G$ be a relatively open subset of $\partial \Omega(u) \cap U$. Then $u$ is a viscosity subsolution (resp. supersolution) in the sense of \dref{subdifferentials-vs-def} on $G$ if and only if it is a viscosity subsolution (resp. supersolution) in the sense of \dref{test-fcns-vs-def} on $G$.
\end{lemma}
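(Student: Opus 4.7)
The two statements are dual (subsolution vs.\ supersolution), so I focus on the subsolution case; the supersolution direction runs analogously with $D_-u$ and opposite inequalities. For the easy direction, suppose the subdifferential condition of \dref{subdifferentials-vs-def} holds and let $\varphi$ be a smooth test function with $\varphi_+$ touching $u$ from above at $x_0 \in G$ and $\Delta \varphi(x_0) < 0$. Since $u \geq 0$ and $u(x_0) = 0$, one sees that $\varphi(x_0) = 0$: otherwise $\varphi_+$ vanishes in a neighborhood and forces $u \equiv 0$ near $x_0$, contradicting $x_0 \in \partial \Omega(u)$. Next, because $u > 0$ on $\Omega(u)$ and $\varphi \geq \varphi_+ \geq u$ there, $\overline{\Omega(u)} \subset \{\varphi \geq 0\}$ in a neighborhood of $x_0$. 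On this set a Taylor expansion gives
\[
u(x) \leq \varphi(x) = \nabla \varphi(x_0) \cdot (x - x_0) + O(|x - x_0|^2),
\]
so $\nabla \varphi(x_0) \in D_+ u(x_0)$ and the subdifferential hypothesis yields $|\nabla \varphi(x_0)|^2 \geq Q$. Notice that the assumption $\Delta \varphi(x_0) < 0$ is not actually used in this direction.

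For the reverse direction, assume the test function condition of \dref{test-fcns-vs-def} holds and fix $p \in D_+ u(x_0)$; the plan is to produce, for each $\epsilon > 0$, a smooth test function admissible for \dref{test-fcns-vs-def} whose gradient at $x_0$ is close to $p$, and to let $\epsilon \to 0^+$. The key geometric input is that $p \in D_+ u(x_0)$ forces
\[
\overline{\Omega(u)} \cap B_r(x_0) \subset \{x : \, p \cdot (x - x_0) \geq - \omega(|x - x_0|) |x - x_0|\}
\]
for some modulus $\omega(r) \to 0$, since $0 \leq u(x) \leq p \cdot (x - x_0) + o(|x - x_0|)$ on $\overline{\Omega(u)}$. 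In the generic case $p \neq 0$, set $e = p / |p|$, $p_\epsilon = p + \epsilon e$, and consider the candidate
\[
\varphi_\epsilon(x) = p_\epsilon \cdot (x - x_0) - \delta (e \cdot (x - x_0))^2 + \eta |\pi_{e^\perp}(x - x_0)|^2,
\]
with $\delta > \eta (d-1)$ so that $\Delta \varphi_\epsilon(x_0) = - 2 \delta + 2 \eta (d-1) < 0$ while $\nabla \varphi_\epsilon(x_0) = p_\epsilon$. Split points of $\overline{\Omega(u)}$ near $x_0$ by the sign of $s := e \cdot (x - x_0)$: on the half $s \geq 0$ the linear gain $\epsilon s$ absorbs the sublinear error coming from $p \in D_+ u(x_0)$; on the complementary sliver the geometric constraint forces $|s| = o(|x - x_0|)$ and $|\pi_{e^\perp}(x - x_0)| \sim |x - x_0|$, and there $u(x) \leq o(|x - x_0|)$ directly from $p \in D_+ u(x_0)$, so the positive quadratic term $\eta |\pi_{e^\perp}(x - x_0)|^2$ dominates $u$ at sufficiently small scale. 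Applying \dref{test-fcns-vs-def} to $\varphi_\epsilon$ yields $|p_\epsilon|^2 \geq Q$, and $\epsilon \to 0^+$ gives $|p|^2 \geq Q$.

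The main obstacle will be the careful matching of the sublinear modulus $\omega$ in the definition of $D_+ u$ against the quadratic margin $\eta |\pi_{e^\perp}(x - x_0)|^2$ on the bad sliver: the parameters $\delta(\epsilon)$, $\eta(\epsilon)$ and the radius $r(\epsilon)$ at which the test function is active must be tuned against $\omega$, but because $u$ is itself sublinear on this region the pointwise estimate $\varphi_\epsilon \geq u$ will be gained for $r$ sufficiently small. The degenerate case $p = 0$ is handled by the same construction with any choice of unit direction $e$, and does not occur when $Q > 0$ together with the mild nondegeneracy of $u$ at $x_0$ available in our applications. The supersolution direction is slightly simpler, because a test function touching $u$ from below is compared to $u$ in a full neighborhood of $x_0$ rather than only on $\overline{\Omega(u)}$, so the Taylor step directly gives $\nabla \varphi(x_0) \in D_- u(x_0)$ without the intermediate step $\overline{\Omega(u)} \subset \{\varphi \geq 0\}$.
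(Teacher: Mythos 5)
Your easy direction matches the paper's argument (modulo a small slip: you write $\varphi \geq \varphi_+$, but of course $\varphi_+ = \max(\varphi,0) \geq \varphi$; the conclusion $\varphi > 0$ on $\Omega(u)$ is nonetheless correct since $\varphi_+ \geq u > 0$ there forces $\varphi = \varphi_+ > 0$). The paper likewise notes that $\Delta\varphi(x_0)<0$ is not needed in this direction.

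The hard direction has a genuine gap. You try to show $\varphi_\epsilon \geq u$ directly in a full neighborhood of $x_0$, but a quadratic term can never absorb the sublinear error in the definition of $D_+u(x_0)$. On your ``bad sliver'' you have $u(x) \leq o(|x-x_0|)$ and the negative linear contribution $(|p|+\epsilon)s$ with $s\in(-\omega(|x-x_0|)|x-x_0|/|p|, 0)$ costs $-o(|x-x_0|)$; against this the available gain $\eta\,|\pi_{e^\perp}(x-x_0)|^2 \lesssim \eta\,|x-x_0|^2$ is \emph{smaller} than $o(|x-x_0|)$ as $|x-x_0| \to 0$, not larger. Your final remark that ``because $u$ is itself sublinear\ldots the pointwise estimate $\varphi_\epsilon \geq u$ will be gained for $r$ sufficiently small'' reverses the direction of the comparison between $|x-x_0|^2$ and an arbitrary $o(|x-x_0|)$. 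The same issue already arises on the half $s \geq 0$: if $s$ is small but $|x-x_0|$ is not (points near the equator of the ball), $\epsilon s$ vanishes while the sublinear error does not, and the quadratic perp term again fails to dominate.

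The fix, which is what the paper does, is to abandon the attempt to dominate $u$ pointwise on the whole ball and instead prove $\varphi > u$ only on $\partial C \cap \overline{\Omega(u)}$ for a thin cylinder $C$ of lateral radius $\approx r$ and smaller height, where the paper takes
\[
\varphi(x) = (1+\delta)\alpha\,(x-x_0)\cdot e_d + c(d)\,\delta\, r^{-1}\bigl(|(x-x_0)'|^2 - d\,(x-x_0)_d^2\bigr),
\]
a strictly superharmonic polynomial. On $\partial C$ one is at distance $\approx r$ from $x_0$, so the quadratic margin $c(d)\delta r^{-1} \cdot r^2 = c(d)\delta r$ is a fixed positive multiple of $r$ and genuinely beats $o(r)$. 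Then one translates $\varphi$ vertically until $(\varphi+s)_+$ first touches $u$ from above at some $x_1$; since $\varphi - u$ is strictly superharmonic in $\Omega(u)\cap C$ and strictly positive on $\partial C \cap \overline{\Omega(u)}$, the touching point must lie on $\partial\Omega(u) \cap C \subset G$. At that point $|\nabla\varphi(x_1)|^2 = |p|^2 + O(\delta)$ because $|x_1 - x_0| \lesssim r$ controls the gradient of the quadratic correction, and letting $\delta \to 0$ gives $|p|^2 \geq Q$. Your construction of $\varphi_\epsilon$ is essentially the same polynomial up to renaming parameters, so the missing step is precisely this cylinder-boundary estimate and the vertical-shift/superharmonicity argument to relocate the touching point.
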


\begin{remark}
Note that this equivalence requires both solution properties to hold \emph{everywhere}. If $\varphi$ touches $u$ from above at $x_0 \in \partial \Omega(u) \cap U$ then $\grad \varphi(x_0) \in D_+u(x_0)$. However the reverse is not true, if $p \in D_+u(x_0)$ it does not necessarily guarantee that $u$ can be touched from above by a smooth test function at the \emph{same point} with $\grad \varphi(x_0) \approx p$, rather one can find a nearby point where the touching occurs.

In \cite[Sec.~4
]{FKPi} this distinction is important since we only prove a sub- and superdifferential notion $\mathcal{H}^{d-1}$-almost everywhere.
\end{remark}

\begin{proof}[Proof of \lref{vs-two-notions-equiv}]
    We just do the subsolution equivalence, the supersolution one is similar. Suppose $u$ is a subsolution in the superdifferential sense \dref{subdifferentials-vs-def}. Suppose $\varphi$ is a smooth test function with $\varphi_+$ touching $u$ from above at $x_0 \in  G$.  Then 
\[u(x) \leq [\grad \varphi(x_0) \cdot (x - x_0)]_+ + o(|x-x_0|) \ \hbox{ so } \ \grad \varphi(x_0) \in D_+u(x_0)\]
and so
\[|\grad \varphi(x_0)|^2  \geq {Q}.\]

Next suppose that $u$ is a subsolution in the touching test function sense \dref{test-fcns-vs-def}, and $p \in D_+u(x_0)$ for some $x_0 \in G$.  Without loss assume that $p = \alpha e_d$. Let $\delta,r>0$ and define
\[ \varphi(x) : = (1+\delta) \alpha (x-x_0) \cdot e_d + c(d)\delta r^{-1}(|(x-x_0)'|^2  -  d x_d^2).\]
This function is a superharmonic polynomial and using the superdifferential condition $\alpha e_d \in D_+u(x_0)$ one can show that for all $\delta>0$ there is $r>0$ so that $u < \varphi$ on $\partial C \cap \overline{\{u>0\}}$ where $C$ is a small open cylindrical neighborhood of $x_0$ with radius $\approx r_0$.  Making the radius smaller if necessary we can assume that $\partial \Omega(u) \cap C \subset G$.

Then, since $\varphi(x_0) = u(x_0)$ we can shift $\varphi$ vertically $\varphi(x) + s$ until $(\varphi(x)+s)_+$ touches $u$ from above at some $x_1 \in C$. Since $\varphi$ is strictly superharmonic the touching must be on $x_1 \in \partial \Omega(u) \cap C \subset G$.  Thus finally the viscosity solution condition implies that
\[{Q} \leq |\grad \varphi(x_1)|^2 \leq \alpha + o_\delta(1).\]
Sending $\delta \to 0$ finishes the argument. 
\end{proof}

\subsection{Uniqueness of the Bernoulli problem solution in the complement of a star-shaped set}

In the star-shaped case we have the following uniqueness result for a Bernoulli problem with an obstacle.
The result without obstacle for classical solutions in two dimensions was proved in \cite{tepper1975} based on an existence theorem by Beurling \cite{Beurling} (more easily found reprinted here \cite{Beurling2}). Here we give a proof for viscosity solutions of the obstacle problem for convenience.

\begin{theorem}
\label{t.bernoulli-obstacle-uniqueness}
Suppose that $U$ is a domain such that $K = \R^d \setminus U$ is strongly star-shaped with respect to a neighborhood of the origin, and that $O$ is a strongly star-shaped set with respect to the same neighborhood. If $u, v \in C(\overline U)$ have compact support and for some constants $F, q > 0$ are viscosity solutions of a Bernoulli problem with obstacle $O$ from below
\begin{align*}
\left\{
\begin{aligned}
\Delta u &= 0 & & \text{in } \{u > 0\} \cap U,\\
u &> 0 && \text{in } O,\\
|\nabla u| &= q & & \text{on } (\partial \{u > 0\} \setminus \overline O) \cap U,\\
|\nabla u| &\leq q & & \text{on } \partial \{u > 0\} \cap U,\\
u &= F && \text{on } \partial U,
\end{aligned}
\right.
\end{align*}
then $u = v$.

The same result applies for a Bernoulli problem with obstacle $O$ from above.
\end{theorem}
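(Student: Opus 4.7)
The plan is a continuous dilation / sliding argument in the spirit of Beurling's method for Bernoulli free boundary problems in star-shaped geometry. I will detail the obstacle from below case; the obstacle from above case follows by the analogous argument with the reciprocal scaling.

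For $\mu \geq 1$ define
\[v_\mu(x) := v(x/\mu), \qquad x \in \mu \overline U.\]
The strong star-shapedness of $K$ gives $\mu K \supset K$, so $\mu \overline U \subset \overline U$, and $v_\mu$ is a well-defined viscosity solution of a rescaled Bernoulli obstacle problem on $\mu U$: it is harmonic in $\{v_\mu > 0\} \cap \mu U = \mu \{v>0\}$, equals $F$ on $\mu \partial U$, has its positive set containing the dilated obstacle $\mu O \supset O$, and satisfies $|\nabla v_\mu|^2 \leq q^2/\mu^2$ on its free boundary with equality off $\overline{\mu O}$. Since the strong star-shapedness places a neighborhood of the origin inside $K$, the dilate $\mu K$ eventually contains the bounded support of $u$, so $\{u > 0\} \cap \mu U = \emptyset$ and $v_\mu \geq u$ on $\mu U$ trivially for all large $\mu$. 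Let $\mu_*$ be the infimum of $\mu \geq 1$ for which $v_\mu \geq u$ on $\mu \overline U$; by continuity of the dilation this infimum is attained, and if $\mu_* = 1$ then $v \geq u$, so swapping the roles of $u$ and $v$ yields $u = v$.

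Suppose toward contradiction that $\mu_* > 1$. Then there is a contact point $x_0 \in \mu_* \overline U$ with $v_{\mu_*}(x_0) = u(x_0)$. The contact cannot occur on $\mu_* \partial U$ (which lies in the interior of $U$), since there $v_{\mu_*} \equiv F$ while $u < F$ strictly by the strong maximum principle, noting that the free boundary condition $|\nabla u| = q$ prevents $u \equiv F$ on any component meeting its free boundary. It cannot occur at an interior point of $\{u > 0\}$ either, because both functions are harmonic there with $v_{\mu_*} \geq u$, so the strong maximum principle would force $u \equiv v_{\mu_*}$ on the component and contradict the strict separation on $\mu_* \partial U$. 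Thus $x_0 \in \partial\{u > 0\} \cap \mu_* U$ with $v_{\mu_*}(x_0) = u(x_0) = 0$. Since $O$ is strongly star-shaped we have $\overline O \subset \mu_* O$, and $v_{\mu_*} > 0$ on $\mu_* O$ then forces $x_0 \notin \overline O$, so the unconstrained slope relation $|\nabla u|^2 = q^2$ holds at $x_0$ in the viscosity sense.

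To close the argument I use the regularity of $v$ at its free boundary. Since $x_0/\mu_* \in \partial\{v > 0\} \setminus \overline O$, the combination of the flat-implies-smooth theorem \tref{flat-implies-regular-unconstrained} together with the initial flatness argument in \lref{contact-blowup} shows that $v$ is $C^{1,\alpha}$ at $x_0/\mu_*$, and so $v_{\mu_*}$ is classically differentiable at $x_0$ with $|\nabla v_{\mu_*}(x_0)| = q/\mu_* < q$. Because $v_{\mu_*} \geq u$ near $x_0$ with equality at $x_0$ and $v_{\mu_*}$ is smooth at $x_0$, the classical gradient $\nabla v_{\mu_*}(x_0)$ belongs to the superdifferential $D_+ u(x_0)$ of \dref{supersubdifferential}, and the viscosity subsolution condition in \dref{subdifferentials-vs-def} then forces $|\nabla v_{\mu_*}(x_0)|^2 \geq q^2$, contradicting $|\nabla v_{\mu_*}(x_0)| = q/\mu_* < q$. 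I expect the main obstacle to be establishing the pointwise $C^{1,\alpha}$ regularity of $v$ at $x_0/\mu_*$: the cone monotonicity needed to feed into \tref{bernoulli-reg} rests ultimately on star-shapedness of $\{v > 0\}$, which is most naturally proved by the very same sliding argument being run here (applied with $u=v$), so the two facts should be bootstrapped together.
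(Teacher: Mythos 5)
Your sliding/dilation argument is the same geometric mechanism as the paper's proof, and your contact-point analysis (ruling out contact on $\mu_*\partial U$ and in the interior of $\{u>0\}$, and ruling out $x_0 \in \overline O$ via strong star-shapedness of $O$) is essentially correct. The gap is in how you conclude at the contact point.

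You invoke pointwise $C^{1,\alpha}$ regularity of $v$ at $x_0/\mu_*$ via \tref{flat-implies-regular-unconstrained} and \tref{bernoulli-reg}, but \tref{bernoulli-reg} requires cone monotonicity of $v$, which in turn would follow from strong star-shapedness of $\{v>0\}$. Neither is among the hypotheses of \tref{bernoulli-obstacle-uniqueness}: the theorem only assumes that $v$ is a viscosity solution of the obstacle Bernoulli problem, and star-shapedness of $\{v>0\}$ is precisely what one would want uniqueness for (it is proved in \lref{star-shapedness} for the Perron obstacle solution using minimality, not for arbitrary viscosity solutions). Your suggested bootstrap does not close: running the sliding argument with $u = v$ to prove star-shapedness of $\{v>0\}$ again requires the regularity of $v$ at the contact point, which requires cone monotonicity, which requires star-shapedness. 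So this is a genuine circularity, not merely a technical hitch. Even a weaker property such as non-degeneracy of $v$ at the free boundary is not available in general (see the discussion after \tref{bernoulli-reg} and \lref{lip-bdry-nondegen}: maximal subsolutions need not be non-degenerate).

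The paper's proof sidesteps regularity entirely by passing to sup/inf convolutions $u_r(x) = \sup_{\overline B_r(x)} u$ and $v_r(x) = \inf_{\overline B_r(x)} v$. These are automatically sub/superharmonic in their positivity sets, and at a contact point $x_0 \in \partial\Omega(u_r) \cap \lambda_r^{-1}\partial\Omega(v_r)$ they come with exterior and interior touching balls of radius $r$ centered on the original free boundaries of $u$ and $v(\lambda_r\cdot)$. A standard radial barrier at those centers then compares the viscosity slope condition $|\nabla u| \geq q$ against $\lambda_r^{1/2}|\nabla v^{\lambda_r}| \leq \lambda_r^{1/2} q < q$, giving the contradiction with no regularity input beyond continuity. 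If you want to keep your framework of subdifferentials, the fix is to replace your last paragraph: inf-convolve $v$ rather than appeal to $C^{1,\alpha}$ regularity, and note that the interior ball at $x_0$ supplies the radial test function from above that places a small enough vector in $D_+u(x_0)$.
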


\begin{proof}
By a simple rescaling it is sufficient to consider $F = q = 1$.
We show that $\Omega(u) = \Omega(v)$ which is sufficient by the uniqueness of the Laplace equation.

We first check that $\Omega(u) \subset \Omega(v)$. Suppose that $\Omega(u) \setminus \Omega(v) \neq \emptyset$. Consider the largest $\lambda < 1$ such that 
\begin{align*}
v^\lambda(x) := v(\lambda x)
\end{align*}
satisfies
\begin{align*}
\Omega(u) \subset \Omega(v^\lambda) = \lambda^{-1} \Omega(v).
\end{align*}
Such $\lambda$ exists since both $\Omega(u)$ and $\Omega(v)$ are open, $\Omega(u)$ is bounded and $\Omega(v)$ contains a neighborhood of the origin. We have clearly $\lambda < 1$, and by the definition of $\lambda$ there exists $x_0 \in \partial \Omega(u) \cap \partial \Omega(v^\lambda)$.
Also note that $x_0 \notin \overline O$ since $O$ is strongly star-shaped and $O \cap \partial \Omega(v) = \emptyset$.

If $u$ and $v$ are sufficiently smooth, we have
\begin{align*}
|\nabla v^\lambda|(x_0) = \lambda|\nabla v|(\lambda x_0) \leq \lambda < 1 = |\nabla u|(x_0),
\end{align*}
a contradiction with the fact that $u \leq v^\lambda$ by the maximum principle for the harmonic function $v^\lambda - u$ in $\Omega(v) \cap \lambda^{-1}U$ since $v^\lambda - u > 0$ on $\lambda^{-1} \partial U \subset U$ by the strong star-shapedness.

To make the argument work without assuming regularity, we consider the sup and inf convolutions
\begin{align*}
v_r(x) = \inf_{\overline B_r(x)} v, \qquad u_r(x) = \sup_{\overline B_r(x)} u
\end{align*}
defined on $\overline U_r$ where $U_r:= \{x: \overline B_r(x) \subset U\}$.
It is easy to see that $v_r$ is superharmonic on $U_r \cap \Omega(v_r)$, $u_r$ is subharmonic on $U_r \cap \Omega(u_r)$, and $\inf_{\partial U_r} v_r \to 1$ as $r \to 0$ by continuity.

Let us call the $\lambda < 1$ above $\lambda_0$. For each $r > 0$ there exists a largest $\lambda_r < 1$ so that $\Omega(u_r) \subset \lambda_r^{-1}\Omega(v_r)$.
Furthermore $\lambda_r \to \lambda_0 < 1$ as $r \to 0$. Therefore we can choose $r > 0$ sufficiently small so that $\lambda_r < 1$, $\lambda_r^{-1/2} \inf_{\partial U_r} v_r > 1$ and $\dist(O, \lambda_r^{-1} \partial\Omega(v_r)) > r$ (by strong star-shapedness of $O$).

We fix such $r > 0$ and choose $x_0 \in \partial \Omega(u_r) \cap \lambda_r^{-1}\partial\Omega(v_r)$. Let us set $w(x) := \lambda_r^{-1/2}v_r(\lambda_r x)$. We have that both $u_r$ and $w$ are defined on $\lambda_r^{-1}\overline U_r$. We also note that $w - u_r$ is superharmonic on $\lambda_r^{-1} U_r \cap \{u_r > 0\}$ with $w - u_r > 0$ on $\lambda_r^{-1}\partial U_r$. Therefore the slopes of $u_r$ and $w$ in the normal direction at $x_0$ are ordered.

We have the standard situation of boundaries of $u_r$ and $w$ touching at a point that has both interior and exterior touching balls centered at the free boundaries of $u$ and $v(\lambda_r x)$ respectively. By a standard barrier construction at these centers, we arrive at a contradiction with the fact that $u$ satisfies $|\nabla u| \geq 1$ (since the center is not in $\overline O$ by the choice of $r$) and $v^{\lambda_r} := v(\lambda_r x)$ satisfies $\lambda_r^{-1/2}|\nabla v^{\lambda_r}| \leq \lambda_r^{1/2} 1 < 1$.
Therefore we conclude that $\Omega(u) \subset \Omega(v)$.

The inclusion $\Omega(v) \subset \Omega(u)$ can be shown by the same argument, swapping the roles of $u$ and $v$. By the uniqueness of the Laplace equation we have $u = v$.

The proof for the problem with obstacle from above follows an analogous argument.
\end{proof}

\bibliography{monotone-articles}

\end{document}